\theoremstyle{plain} 
\newtheorem*{theorem*}{Theorem}
\theoremstyle{definition} 
\newtheorem*{definition*}{Definition}
\newtheorem*{example*}{Example}
\newtheorem*{construction*}{Construction}
\newtheorem*{visualization*}{Visualization}
\newtheorem*{question*}{Question}
\newtheorem*{claim*}{Claim}
\begin{document}
   \title{The Inductive Coherator for Grothendieck Infinity Groupoids}

\author{Johnathon Taylor}
\address{
Department of Mathematics, Applied Mathematics, and Statistics,
Case Western Reserve University,
10900 Euclid Ave, Cleveland, OH 44106-2624 
}
\email{jmt240@case.edu}
\keywords{factorisation system, infinity groupoids, monads, distributive series}
\subjclass{18A32,18C15,18N99,18A20}
\begin{abstract}
We extend the theory of distributive series of monads of \cite{EC1} by extending the definition to include an $\bN$-indexed collection of monads. Under certain conditions, distributive series of monads will have a colimit in the category of pointed endofunctors. We define a \emph{completable} distributive series of monads to be a distributive series of monads whose induced pointed endofunctor, if it exists, lifts to a monad. We then construct factorization systems used to generate monads on the category of theories over $\Theta_0^\op$, in order to form two \emph{completable} distributive series of monads. The first completable distributive series of monads induces a monad that sends the identity theory over $\Theta_0^\op$ to an $(\infty,0)$-coherator whose inductive construction mimics inductive weak enrichment. The second completable distributive series of monads induces a monad that sends the identity theory over $\Theta_0^\op$ to a theory for strict $\infty$-groupoids.
\end{abstract}

\maketitle

\tableofcontents
	
\section{Introduction}
One way that higher category theory is approached in the globular setting is through the lens of weak enrichment. A \emph{weak $(n+1)$-category} is, by definition, a category weakly enriched in weak $n$-categories, where $n>0$. In principle, one can write down a presentation for an $n$-category for every $n>0$, but in practice such presentations quickly become unwieldy, and essentially intractable once $n \geq 4$ 

Another way that higher category theory is tackled in the globular setting is by building a theory over a limit sketch and defining 
\begin{itemize}
\item there is a straightforward built-in way to systematically build a map that compares models over the theory to other categories
\item a solution to the Homotopy Hypothesis has been boiled down to showing that the pushout of a finitely cellular $\infty$-groupoid along a generating trivial cofibration is a weak equivalence (see Section 5.3 of \cite{hen2}).
\end{itemize}

A third method in the globular setting is the method of globular operads of Batanin and Leinster (see Definition 8.7 of \cite{Bat1} and Definition 9.3.6 of \cite{Lein}). Leinster defines a weak $n$-category to be an algebra over the initial $n$-globular operad with contraction (see Proposition 9.3.5 and Definition 9.3.6 of \cite{Lein}). Leinster's definition is algebraic in nature, has a universal property, and is inductively defined. 

In this paper, we use Grothendieck $\infty$-groupoids. We build a theory that removes the technical obstruction to working with general Grothendieck $\infty$-groupoids: the data for the theory not necessarily being encoded in a structured fashion.  In this paper, we use an expansion of the theory for distributive series of monads and the theory of factorization systems to obtain a method that inductively builds a special choice of theory, $IC$, for Grothendieck $\infty$-groupoids that we call the \emph{inductive coherator}. We understand this inductive construction as applying weak enrichment inductively on the dimensions of cells, is constructed algebraically, and is obtained by a universal property. Moreover, we use the same methodology to obtain a theory for strict $\infty$-groupoids. 

We organize the rest of the paper as follows. In Section 2, we provide background on the theory of monads, comonads, distributive laws, factorization systems, globular sets, and Grothendieck $\infty$-groupoids. In Section 3, we obtain \emph{Kelly's Small Object Argument} (see Theorem \ref{Kelly_SOA}), which is reminiscent of Kelly's $k_+$-construction (see Theorem 11.5 and 11.3 of \cite{Kell}), as a quotient of the \emph{Algebraic Small Object Argument} of Garner.  In Section 4, we provide background on the theory of distributive series of monads as introduced by Cheng in \cite{EC1} and expand upon the definition to have an $\bN$-indexed collection of monads.
 
 We establish our choice of theory $IC$ (see Theorem \ref{our_sp_choice}) for Grothendieck $\infty$-groupoids in Section 5 by building a completable distributive series of monads (see Theorem \ref{ind_monad_AWFS}). We conclude our main exposition in Section 6 by constructing a theory for strict $\infty$-groupoids (see Theorem \ref{strict_coh_is_true}). We provide two appendices that serve as Section 7 and Section 8, respectively. The first appendix, Appendix A, sketches out the structure maps obtained in the inductive coherator $IC$ at each step.  The second appendix, Appendix B, covers the theory of distributive series of monads when specialized to monads whose units are epimorphism on components.

\section{Background}
We begin by providing and setting notation that will be used throughout the rest of this paper about monads and distributive laws between monads. Then, we discuss comonads and distributive laws between monads and comands. Next, we provide background on lifts and factorization systems. Finally, we restate definitions and theorems for the theory of Grothendieck $\infty$-groupoids. We take the definitions and lemmas provided from various sources (see \cite{Beck_Dist}, \cite{Bo2}, \cite{Hov2}, and \cite{Quill}).

\begin{notation}
Given a category $C$, we denote
\begin{itemize}
\item a general monad on $C$ by $(R,\eta,\mu)$
\item the category of monads on $C$ by $\Monad(C)$
\item the category of endofunctors on $C$ by $\End(C)$
\item the category of pointed endofunctors on $C$ by $\End_*(C)$, and
\item the forgetful functor from monads to pointed endo-functor as 
\[
U:\Monad(C)\to \End_*(C).
\]
\end{itemize}
\end{notation}

Later in this paper, we want to talk about when a pointed-endofunctor on a category lifts to a monad structure.

\begin{definition}
Let $(T,\eta)$ be a pointed endofunctor on $C$. We say that $(T,\eta)$ \emph{lifts} to a monad if there is a monad $(T,\eta,\mu)$ such that
\[
U(T,\eta,\mu)=(T,\eta).
\]
It \emph{uniquely lifts} if given a monad $(T,\eta,\mu')$ with $U(T,\eta,\mu')=(T,\eta)$, then $\mu=\mu'$.
\end{definition}

\begin{definition} [Beck, Definition 1 of \cite{Beck_Dist}]\label{mon_over_mon}
	Let $(S,\eta^S,\mu^S)$ and $(T,\eta^T,\mu^T)$ be monads on a category $C$.  A {\em distributive law of $S$ over $T$} consists of a natural transformation $\lambda : ST \Rightarrow TS$ such that the following diagrams commute.
	
	\[
		\begin{tikzpicture}[node distance=1.5cm]
			\node (A){$ST$};
			\node (B)[right of=A]{};
			\node(C)[right of=B]{$TS$};
			\node(D)[below of=B] {$T$};
			\node (E)[above of=B]{$S$};
			\draw[->] (A) to node[black,above=3] {$\lambda$} (C);
			\draw[->,swap](D) to node [black,left=3,below=3] {$\eta^ST$} (A);
			\draw[->] (D) to node [black,right=3] {$T\eta^S$} (C);
			\draw[->,swap](E) to node [black,left=3] {$S\eta^T$} (A);
			\draw[->] (E) to node [black,right=3] {$\eta^TS
				$} (C);
		\end{tikzpicture}
	\]
	
	\[
		\begin{tikzpicture}[node distance=1.5cm]
			\node (A) {$S^2T$};
			\node (B)[below of =A]{$ST$};
			\node (C)[below of=B]{$ST^2$};
			\node (D)[right of=C,node distance=3cm]{$STS$};
			\node (E) [above of=D]{};
			\node (F) [above of=E]{$TST$};
			\node (G) [right of=F,node distance=3cm]{$TS^2$};
			\node (H) [below of=G]{$TS$};
			\node (I)[below of=H]{$T^2S$};
			\draw[->,swap] (A) to node[black,left=3] {$\mu^ST$}(B);
			\draw[->] (A) to node [black,above=3] {$S\lambda$}(F);
			\draw[->] (F) to node[black,above=3] {$\lambda S$}(G);
			\draw[->](G) to node[black,right=3] {$T\mu_S$}(H);
			\draw[->](B) to node[black,above=3] {$\lambda$}(H);
			\draw[->] (C) to node[black,left=3] {$S\mu^T$}(B);
			\draw[->] (I) to node[black, right=3]{$\mu^TS$} (H);
			\draw[->,swap](C) to node[black, below=3]{$\lambda T$} (D);
			\draw[->,swap] (D) to node[black,below=3]{$T\lambda$}(I);
		\end{tikzpicture}
\]
\end{definition}

\begin{proposition}(Beck, Proposition 1 of \cite{Beck_Dist})\label{comp_monad}
Let $(S,\eta^S,\mu^S)$, $(T,\eta^T,\mu^T)$ be monads on $C$ and $\lambda:ST\Rightarrow TS$ be a distributive law. Then the triple 
\[
(ST,S\eta^T\circ \eta^S,S\mu^T\circ\mu^ST^2\circ S\lambda T)
\]
is a monad.
\end{proposition}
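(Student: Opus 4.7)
The plan is to verify the three monad axioms for the triple $(R,\eta,\mu) = (ST,\, S\eta^T \circ \eta^S,\, S\mu^T \circ \mu^S T^2 \circ S\lambda T)$, namely the two unit identities $\mu \circ \eta R = \mathrm{id}_R = \mu \circ R\eta$ and the associativity square $\mu \circ \mu R = \mu \circ R\mu$. The overall strategy is to let the two unit triangles in the first axiom of Definition~\ref{mon_over_mon} dispatch the unit identities, and to let the two pentagons in the second axiom, together with naturality of $\lambda$, reduce associativity for $\mu$ to the separate associativities of $\mu^S$ and $\mu^T$.

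I would dispatch the unit axioms first, since they are the shortest. Expanding $\mu \circ \eta R$ by substituting $\eta = S\eta^T \circ \eta^S$ produces a chain of arrows in which, after one naturality step that commutes $\eta^S$ or $\eta^T$ past a $\lambda$, one of the triangles $\lambda \circ \eta^S T = T\eta^S$ or $\lambda \circ S\eta^T = \eta^T S$ appears as a sub-composite. The remaining arrows then collapse by a unit law for $\mu^S$ followed by a unit law for $\mu^T$. The opposite unit identity $\mu \circ R\eta = \mathrm{id}_R$ is symmetric and uses the sibling triangle.

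Associativity is where the real content lies. Both composites $\mu \circ \mu R$ and $\mu \circ R\mu$ are natural transformations $(ST)^3 \to ST$, and each can be written as an explicit chain of six morphisms threading objects of the form $S^a T^b S^c T^d \cdots$. I would transform one chain into the other by alternating (i) naturality of $\lambda$, which commutes it past an adjacent multiplication or unit, and (ii) the two pentagons, each of which trades an instance of $S\lambda T$ for a $\mu^S T^2$ or an $S\mu^T$. Once all copies of $\lambda$ have been pushed to a common position, what remains on each side is an instance of the associativity of $\mu^S$ (whiskered by $T$'s on the right) and an instance of the associativity of $\mu^T$ (whiskered by $S$ on the left), applied independently to close the square.

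The main obstacle is choreographing the associativity diagram: its intermediate vertices $S^3T^3$, $S^2TST^2$, $S^2T^2ST$ and the like arise in a prescribed order, and at every internal edge one must identify precisely which naturality square or which pentagon axiom of $\lambda$ fills the corresponding sub-face. The order in which these axioms must be applied is dictated by which of the two copies of $ST$ one chooses to multiply first. There is no genuinely novel idea beyond this bookkeeping, but care is required to thread the four axioms of Definition~\ref{mon_over_mon} through in the correct sequence.
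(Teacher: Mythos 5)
The paper offers no proof of this proposition; it is quoted verbatim from Beck and used as a black box. Your outline is exactly Beck's original argument — the two unit triangles plus one naturality square dispatch the unit laws, and the two pentagons together with naturality of $\lambda$ reduce associativity to the separate associativities of $\mu^S$ and $\mu^T$ — and it is correct in strategy, so there is nothing to compare against and no gap to report. One caveat worth noting before you execute the diagram chase: with the paper's convention $\lambda : ST \Rightarrow TS$, the whiskering $S\lambda T$ has domain $S^2T^2$ rather than $(ST)^2$, so the displayed formula only typechecks if the composite monad is taken to be $TS$ with multiplication $T\mu^S \circ \mu^T S^2 \circ T\lambda S$ (equivalently, if the direction of $\lambda$ is reversed); this is an inconsistency in the statement as printed, not in your plan, but you will need to fix the orientation before the bookkeeping in your associativity hexagon will close up.
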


That is to say, the composite of the underlying pointed endo-functors lifts to a monad structure. 

\begin{notation}
Given a category $C$, we denote
\begin{itemize}
\item a general comonad on $C$ by $(L,\epsilon,\Delta)$, and
\item the category of comonads on $C$ by $\Comonad(C)$.
\end{itemize}
\end{notation}

\begin{definition} (Power and Watanabe, Definition 6.1 of \cite{PowWat})\label{common_over_mon}
	Let $(S,\epsilon^S,\Delta^S)$ be a comonad and $(T,\eta^T,\mu^T)$ be a monad on a category $\cC$.  A {\em distributive law of $S$ over $T$} consists of a natural transformation $\lambda : ST \Rightarrow TS$ such that the following diagrams commute:
	
\[
		\begin{tikzpicture}[node distance=1.5cm]
			\node (A){$ST$};
			\node (B)[right of=A]{};
			\node(C)[right of=B]{$TS$};
			\node(D)[below of=B] {$T$};
			\node (E)[above of=B]{$S$};
			\draw[->] (A) to node[black,above=3] {$\lambda$} (C);
			\draw[->,swap](A) to node [black,left=3,below=3] {$\epsilon^ST$} (D);
			\draw[->] (C) to node [black,right=3] {$T\epsilon^S$} (D);
			\draw[->,swap](E) to node [black,left=3] {$S\eta^T$} (A);
			\draw[->] (E) to node [black,right=3] {$\eta^TS
				$} (C);
		\end{tikzpicture}
\]
\[
		\begin{tikzpicture}[node distance=1.5cm]
			\node (A) {$S^2T$};
			\node (B)[below of =A]{$ST$};
			\node (C)[below of=B]{$ST^2$};
			\node (D)[right of=C,node distance=3cm]{$STS$};
			\node (E) [above of=D]{};
			\node (F) [above of=E]{$TST$};
			\node (G) [right of=F,node distance=3cm]{$TS^2$};
			\node (H) [below of=G]{$TS$};
			\node (I)[below of=H]{$T^2S$};
			\draw[->,swap] (B) to node[black,left=3] {$\Delta^ST$}(A);
			\draw[->] (A) to node [black,above=3] {$S\lambda$}(F);
			\draw[->] (F) to node[black,above=3] {$\lambda S$}(G);
			\draw[->](H) to node[black,right=3] {$T\Delta^S$}(G);
			\draw[->](B) to node[black,above=3] {$\lambda$}(H);
			\draw[->] (C) to node[black,left=3] {$S\mu^T$}(B);
			\draw[->] (I) to node[black, right=3]{$\mu^TS$} (H);
			\draw[->,swap](C) to node[black, below=3]{$\lambda T$} (D);
			\draw[->,swap] (D) to node[black,below=3]{$T\lambda$}(I);
		\end{tikzpicture}
\]
\end{definition}

We require distributive laws of comonads over monads for the definition of algebraic weak factorization systems (see Definition \ref{AWFS}).

\begin{notation}
Let $C$ be a category. We write $\Arr(C)$ to denote the category of squares in $C$.
\end{notation}

\begin{definition}
	Consider a commutative square 
	\[
	\begin{tikzpicture}[node distance=2cm,auto]
		\node (A) {$x$};
		\node (B)[right of=A] {$a$};
		\node(C)[below of=A] {$y$};
		\node (D)[right of=C] {$b$};
		\draw[->] (A) to node {$i$}(B);
		\draw[->,swap] (A) to node{$f$}(C);
		\draw[->] (B) to node{$g$}(D);
		\draw[->,swap] (C) to node {$j$}(D);
	\end{tikzpicture}
	\]
	in $C$. A \emph{lifting} for the square is a morphism $h:y\to a$ such that the two triangles in the diagram
	\[
	\begin{tikzpicture}[node distance=2cm,auto]
		\node (A) {$x$};
		\node (B)[right of=A] {$a$};
		\node(C)[below of=A] {$y$};
		\node (D)[right of=C] {$b$};
		\draw[->] (A) to node {$i$}(B);
		\draw[->,swap] (A) to node{$f$}(C);
		\draw[->] (B) to node{$g$}(D);
		\draw[->,swap] (C) to node {$j$}(D);
		\draw[->,dotted](C) to node[left=3]{$h$}(B);
	\end{tikzpicture}
	\]
	commute.
\end{definition}

We call a problem of finding a lift a \emph{lifting problem} and the lift is called a \emph{solution}.

\begin{definition}
	Let $f:x\to y$ and $g:a\to b$ be two morphisms in $C$. We say that the ordered pair $(f, g)$ is \emph{orthogonal}, denoted by $f\perp g$, provided that any commutative diagram
	\[
	\begin{tikzpicture}[node distance=2cm,auto]
		\node (A) {$x$};
		\node (B)[right of=A] {$a$};
		\node(C)[below of=A] {$y$};
		\node (D)[right of=C] {$b$};
		\draw[->] (A) to node {}(B);
		\draw[->,swap] (A) to node{$f$}(C);
		\draw[->] (B) to node{$g$}(D);
		\draw[->,swap] (C) to node {}(D);
	\end{tikzpicture}
	\]
	admits a lifting.
\end{definition}

\begin{notation}
Let $C$ be a category and $H$ be a set of maps of $C$. We write 
\[
H^{\perp}:=\{f\in\mor(C):h\perp f\text{ for all }h\in H\}
\]
and 
\[
^{\perp} H:=\{f\in\mor(C):f\perp h\text{ for all }h\in H\}
\]
\end{notation}

\begin{definition}
	A \emph{weak factorization system} (or \emph{wfs} for short) on a category $C$ consists of a pair $(\zL,\zR)$ of classes of morphisms of $C$ such that:
	\begin{itemize}
		\item every morphism $f:x\to y$ may be factored as $f=p\circ i$ where $p\in\zR$ and $i\in\zL$
		\item $^\perp\zR\subset \zL$ and $\zL^\perp\subset \zR$.
	\end{itemize}
\end{definition}

\begin{definition}
	Let $\lambda$ be an ordinal and $x:\lambda\to C$ be a functor. 
	The map
$$x_0\longrightarrow {\rm colim}_{i\geq 0} \; x_i$$	
induced by the directed colimit is called the \emph{transfinite composition} of the sequence.
\end{definition}

\begin{definition}
	Let $\mathcal{I}$ be a class of morphisms in $C$. We denote by $\mathcal{I}\mbox{-cell}$ the class of morphisms formed by transfinite compositions of pushouts of coproducts of elements in $\mathcal{I}$. In other words, a morphism in $\mathcal{I}\mbox{-cell}$ is the transfinite composition of a sequence,
	$$x_0\stackrel{\phi_0}\longrightarrow x_1 \stackrel{\phi_1}\longrightarrow x_2\longrightarrow \cdots$$
	where each $\phi_i$ fits into a pushout diagram
	\[
	\begin{tikzpicture}[node distance=2cm,auto]
		\node (A) {$ \coprod_{y\in \Lambda_i} a_y$};
		\node (B)[right of=A] {$x_i$};
		\node(C)[below of=A] {$\coprod_{y\in \Lambda_i} b_y$};
		\node (D)[right of=C] {$x_{i+1}$};
		\draw[->] (A) to node {}(B);
		\draw[->,swap] (A) to node{$\coprod_{x\in \Lambda_i} f_x$}(C);
		\draw[->] (B) to node{$\phi_i$}(D);
		\draw[->,swap] (C) to node {}(D);
	\end{tikzpicture}
	\]
	with each $f_y:a_y\to b_y\in \mathcal{I}$. 
\end{definition}

\begin{definition}
	An object $A$ is \emph{sequentially small}, if for any sequence 
	\[
	X_0\rightarrow X_1 \rightarrow X_2\rightarrow \cdots 
	\]
	of morphisms in $C$, the induced map
	$${\rm colim}_{i\geq 0}\; C(A, X_i)\longrightarrow C(A, {\rm colim}_{i\geq 0} \; X_i)$$
	is a bijection.
\end{definition}

\begin{theorem}(The Small Object Argument)
	Suppose $C$ is a cocomplete category and $\mathcal{I}$ is a set of  morphisms whose domains are sequentially small. Then any morphism $f$ admits a factorization $f=p\circ i$ with $i\in \mathcal{I}\mbox{-}{\rm cell}$ and $p\in \mathcal{I}^\perp$. 
\end{theorem}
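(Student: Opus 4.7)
The plan is the classical Quillen small object construction: iteratively glue in solutions to all outstanding lifting problems against $\mathcal{I}$, then pass to a colimit. Given $f:X\to Y$, I would build an $\bN$-indexed sequence
\[
X=X_0 \xrightarrow{\phi_0} X_1 \xrightarrow{\phi_1} X_2 \xrightarrow{\phi_2} \cdots
\]
equipped with compatible maps $p_n:X_n\to Y$ (with $p_0=f$), where each $\phi_n$ is a pushout of a coproduct of maps in $\mathcal{I}$. The factorisation is then $f=p\circ i$, with $i:X\to X_\infty$ the canonical map to $X_\infty := {\rm colim}_n X_n$ and $p:X_\infty\to Y$ the map induced by the cocone $(p_n)$.

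For the inductive step, let $\Lambda_n$ denote the set of all commutative squares of the form $(u_s,v_s)\colon f_s\Rightarrow p_n$ with $f_s:a_s\to b_s\in\mathcal{I}$, and define $X_{n+1}$ as the pushout of the span
\[
\coprod_{s\in\Lambda_n} b_s \xleftarrow{\coprod f_s} \coprod_{s\in\Lambda_n} a_s \xrightarrow{\coprod u_s} X_n .
\]
The universal property of the pushout furnishes a unique $p_{n+1}:X_{n+1}\to Y$ extending $p_n$ along $\phi_n$ and restricting to $v_s$ on each $b_s$-summand, so the sequence and the cocone extend coherently. By construction each $\phi_n$ is a pushout of a coproduct of elements of $\mathcal{I}$, so the transfinite composition $i:X\to X_\infty$ is a witness that $i\in\mathcal{I}\mbox{-}{\rm cell}$.

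The main obstacle---the only step requiring more than formal manipulation---is verifying that $p\in\mathcal{I}^\perp$. Consider a lifting problem $(u,v)\colon f_s\Rightarrow p$ with $f_s:a_s\to b_s\in\mathcal{I}$. The hypothesis that $a_s$ is sequentially small forces $u:a_s\to X_\infty$ to factor as $u = j_n\circ u_n$ for some $n$, where $j_n:X_n\to X_\infty$ is the colimit coprojection. The resulting square $(u_n,v)$ is then an element of $\Lambda_n$ by definition, so the pushout constructing $X_{n+1}$ supplies a coprojection $\iota_s:b_s\to X_{n+1}$, and $j_{n+1}\circ\iota_s : b_s\to X_\infty$ is the desired lift. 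The upper triangle commutes by the pushout identity $\phi_n\circ u_n = \iota_s\circ f_s$ after post-composing with $j_{n+1}$, while the lower triangle commutes because $p_{n+1}$ was constructed so that $p_{n+1}\circ \iota_s = v_s = v$.
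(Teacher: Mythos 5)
Your proposal is correct and is exactly the classical countable small object argument of Quillen/Hovey that the paper itself invokes by citation (Theorem 2.1.14 of Hovey, Lemma 2.3.3 of Quillen) rather than reproving. All the key points are in place: sequential smallness is used precisely where needed to factor $u:a_s\to X_\infty$ through a finite stage, and the two triangles of the lift are verified from the pushout identities.
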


We have cited this theorem from Hovey (see Theorem 2.1.14 of \cite{Hov2}) but it originates from Quillen (see Lemma 2.3.3 of \cite{Quill}).

\begin{definition}
	If a weak factorization system on a category $C$ is generated by the small object argument, then we say that the weak factorization system on $C$ is \emph{cofibrantly generated}.
\end{definition}

\begin{definition}\label{fun_fact}
	A \emph{functorial factorization system} on $C$ is a functor 
	\[
	F:\Arr(C)\to \Arr(C)\times_C\Arr(C)
	\]
that is a section to the composition functor $c:\Arr(C)\times_C\Arr(C)\to \Arr(C)$ . We write $F=(L,E,R)$ where $L$, $E$, and $R$ are obtained by applying the projections. 
\end{definition}

\begin{notation}
	The \emph{globe category} $\bG$ is the category
\[
		\begin{tikzpicture}[node distance=2cm]
		\node (A) {$0$};
			\node (B)[right of=A]{$1$};
			\node (C)[right of=B]{$2$};
			\node (D)[right of=C]{$\cdots$};
	\draw[transform canvas={yshift=0.3ex},->] (A) to node[above=3] {$s$} (B);
			\draw[transform canvas={yshift=-0.3ex},->,swap](A) to node[below=3] {$t$} (B);
			\draw[transform canvas={yshift=0.3ex},->] (B) to node[above=3] {$s$} (C);
			\draw[transform canvas={yshift=-0.3ex},->,swap](B) to node[below=3] {$t$} (C);
			\draw[transform canvas={yshift=0.3ex},->] (C) to node[above=3] {$s$} (D);
			\draw[transform canvas={yshift=-0.3ex},->,swap](C) to node[below=3] {$t$} (D);
	\end{tikzpicture}
	\]	
	subject to the relations 
	\[
	s\circ s=t\circ s
	\]
	and 
	\[
	t\circ t=s\circ t.
	\]
\end{notation}

\begin{definition}
	A \emph{globular object} $X$ in a category $C$ is a $C$-enriched presheaf over $\bG$. This means a functor $X:\bG^\op\to C$.
\end{definition}

\begin{example}
Define a functor $D :\bG \to \Top$ by letting $D$ send $i$ to the $i$-dimensional ball
\[ D^i = \{x \in \mathbb{R}^i : ||x|| \le 1\} \] for $i\geq 0$
For $i \ge 1$, the morphisms $\sigma_i$ and $\tau_i$ are sent by $D$
to $\sigma^i$ and $\tau^i$, respectively, defined by
\[ \sigma^i(x) = (x, \sqrt{1 - \Vert x\Vert^2}) \quad\text{and}\quad
\tau^i(x) = (x, -\sqrt{1 - \Vert x\Vert^2}) 
\]
for $x \in D^{i-1}$. These morphisms are the inclusions of the two hemispheres of $D^{i}$ into $D^i$. It is easy to verify that $D$ thus well-defined. This induces a globular object 
\[
\Top(D^{(-)},X):\bG^\op\to \Top
\]
 of $\Top$ for every space $X$.
\end{example}

\begin{definition}
Consider diagrams of the form

\begin{center}
	\begin{tikzpicture}[node distance=1cm, auto]
		
		\node (A) {$n_1$};
		\node (B) [right of=A]{} ;
		\node (C) [above of=B] {$n_2$};
		\node (D) [below of=C]{};
		\node(E)[right of=D]{$n_3$};
		\node (F) [right of=E]{} ;
		\node (G) [above of=F] {$n_4$};
		\node (H) [right of=G]{} ;
		\node (I) [below of=H] {$n_5$};
		\node (J)[right of=I]{};
		\node (X) [node distance=0.5cm, above of=J] {$\cdots$};
		
		\node (K)[right of=J] {$n_{2k-1}$};
		\node (L) [right of=K]{} ;
		\node (M) [above of=L] {$n_{2k}$};
		\node (N) [below of=M]{};
		\node(O)[right of=N]{$n_{2k+1}$};
		
		\draw[->,swap] (C) to node {$s$} (A);
		\draw[->] (C) to node {$t$} (E);
		\draw[->,swap] (G) to node {$s$} (E);
		\draw[->] (G) to node {$t$} (I);
		
		\draw[->,swap] (M) to node {$s$} (K);
		\draw[->] (M) to node {$t$} (O);
		
	\end{tikzpicture}
\end{center}
in $\bG$. Part of the diagram data is a $(2k+1)$-tuple $\overline{n}=(n_1,\dots,n_{2k+1})$ such that 
\[
n_{2i-1}>n_{2i}<n_{2i+1}
\]
for all $1\leq i\leq k$. This interpretation of the diagram is called a \emph{table of dimensions}.
\end{definition}

\begin{definition}
	If $A:\bG^\op\to C$ is a functor, then there is an induced diagram 
	
	\begin{center}
		\begin{tikzpicture}[node distance=1.25cm, auto]
			
			\node (A) {$An_1$};
			\node (B) [right of=A]{} ;
			\node (C) [above of=B] {$An_2$};
			\node (D) [below of=C]{};
			\node(E)[right of=D]{$An_3$};
			\node (F) [right of=E]{} ;
			\node (G) [above of=F] {$An_4$};
			\node (H) [right of=G]{} ;
			\node (I) [below of=H] {$An_5$};
			\node (J)[right of=I]{};
			\node (X) [node distance=0.75cm, above of=J] {$\cdots$};
			
			\node (K)[right of=J] {$An_{k-2}$};
			\node (L) [right of=K]{} ;
			\node (M) [above of=L] {$An_{k-1}$};
			\node (N) [below of=M]{};
			\node(O)[right of=N]{$An_k$};
			
			\draw[->] (A) to node {$As$} (C);
			\draw[->,swap] (E) to node {$At$} (C);
			\draw[->] (E) to node {$As$} (G);
			\draw[->,swap] (I) to node {$At$} (G);
			
			\draw[->] (K) to node {$As$} (M);
			\draw[->,swap] (O) to node {$At$} (M);
			
		\end{tikzpicture}
	\end{center}
	and if it has a limit, the limit is called the \emph{globular product}. 
\end{definition}

\begin{definition}
	Let $\Theta_0$ be the category whose objects are all the tables of dimensions and 
	\[
	\Theta_0(\vec{n},\vec{m})=[\bG^\op,\Set](Y(\vec{n}),Y(\vec{m})),
	\]
	where $Y$ is the Yoneda embedding. Moreover, given a table of dimensions $\vec{n}=(n_1,\dots,n_k)$, we define its \emph{height} to be 
	\[
	\hgt(\vec{n})=\max\{n_1,\dots,n_k\}.
	\]
\end{definition}
  
\begin{lemma}(Bourke, Lemma 2.1 of \cite{Bo2})
	There is a functor $D:\bG^\op\to\Theta_0^\op$ where $D(n)=(n)$ on objects which satisfies the following universal property: if $C$ is a category admitting $A$-globular products, there exists an essentially unique extension
	\[
	\begin{tikzpicture}[node distance=2cm, auto]
		
		\node (A) {$\bG^\op$};
		\node (B) [right of=A]{$C$} ;
		\node (C) [above of=A]{$\Theta_0^\op$};

		\draw[->,swap] (A) to node {$A$} (B);
		\draw[->](A) to node {$D^\op$}(C);
		\draw[->](C) to node {$A'$}(B);
		
	\end{tikzpicture}
	\]
	of $A$ to a globular product preserving functor $A'$. This sends $\overline{n}$ to the associated globular product.
\end{lemma}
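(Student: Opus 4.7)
The plan is to construct $D$ via Yoneda and $A'$ directly from the globular-product universal property, and then verify essential uniqueness by comparing limiting cones. On objects $D$ is already prescribed as $D(n) = (n)$. For a globular morphism $\alpha : n \to n'$ in $\bG$ (hence a morphism $n' \to n$ in $\bG^\op$), I would define $D(\alpha)$ to be the morphism $(n') \to (n)$ in $\Theta_0^\op$ whose underlying morphism $(n) \to (n')$ in $\Theta_0$ is the natural transformation of representables $Y((n)) \Rightarrow Y((n'))$ that corresponds to $\alpha$ under the Yoneda lemma. Functoriality of $D$ then reduces to that of the Yoneda embedding.

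Next, fix $A : \bG^\op \to C$ with $C$ admitting $A$-globular products. On objects I would set $A'(\overline{n})$ to be the $A$-globular product of the shape $\overline{n}$, with its universal cone $\pi_j : A'(\overline{n}) \to A(n_j)$. For a morphism $\phi \in \Theta_0(\overline{n},\overline{m})$, unpack $\phi$ as a natural transformation $Y(\overline{n}) \Rightarrow Y(\overline{m})$ and use the standard pushout presentation of $Y(\overline{n})$ by the representables $Y((n_j))$ for odd $j$ to rewrite $\phi$ as a compatible family of cells $\phi_j \in Y(\overline{m})(n_j)$. Each $\phi_j$ is represented by a globular morphism $f_j : n_j \to m_{i(j)}$ for some odd $i(j)$. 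Define the $j$-th leg of a cone from $A'(\overline{m})$ to be the composite $A(f_j) \circ \pi^{\overline{m}}_{i(j)}$, and let $A'(\phi) : A'(\overline{m}) \to A'(\overline{n})$ be the morphism induced by the limit universal property of $A'(\overline{n})$.

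Functoriality of $A'$, the commutation $A' \circ D = A$, and preservation of globular products each reduce to the uniqueness clause in the universal property. For essential uniqueness, any globular-product preserving extension $A''$ equips $A''(\overline{n})$ with a limiting cone over the same diagram, so a canonical isomorphism $A''(\overline{n}) \cong A'(\overline{n})$ exists for each $\overline{n}$; naturality follows from yet another appeal to the universal property. The main obstacle is verifying in step two that the proposed cone legs are independent of the representative $f_j$ chosen for $\phi_j$ in the pushout decomposition, and that they are mutually compatible across the source/target identifications that define the shape $\overline{n}$. Both reductions trace back to the globular identities $ss = ts$ and $tt = st$, which are precisely what make the source and target projections of the globular products coherent, so the bookkeeping, while tedious, is already encoded in the definition of the $A$-globular product.
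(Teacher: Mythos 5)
The paper offers no proof of this lemma at all --- it is quoted verbatim as Lemma 2.1 of Bourke's \cite{Bo2} --- so there is nothing internal to compare against; I can only judge your argument on its own terms, and it is the standard construction and essentially correct. Your definition of $D$ via Yoneda is right (note $Y((n))$ is just the representable $Y(n)$, so $\Theta_0((n),(n'))\cong\bG(n,n')$, and functoriality is immediate), and your definition of $A'$ on a morphism $\phi\in\Theta_0(\overline{n},\overline{m})$ --- restrict $\phi$ along the colimit presentation of $Y(\overline{n})$ to get cells $\phi_j\in Y(\overline{m})(n_j)$, lift each to a representative $f_j:n_j\to m_{i(j)}$ with $i(j)$ odd, and form the cone with legs $A(f_j)\circ\pi^{\overline{m}}_{i(j)}$ --- is exactly how one exhibits $\Theta_0^\op$ as the free completion of $\bG^\op$ under globular products. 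You correctly isolate the two genuine verifications: (i) independence of the choice of representative $f_j$, which holds because two representatives of the same element of the filtered-zigzag colimit $Y(\overline{m})(n_j)=\colim_i\bG(n_j,m_i)$ differ by factoring through an even-position gluing object, and the projections $\pi^{\overline{m}}_i$ already satisfy the corresponding cone identities; and (ii) compatibility of the legs over the even positions of $\overline{n}$, which is the statement that the $\phi_j$ agree after restriction along the gluing maps. The only cautions I would add are bookkeeping ones: the presentation of $Y(\overline{n})$ is an iterated (wide) pushout rather than a single pushout, and the variance conventions ($\Theta_0$ versus $\Theta_0^\op$, $A$ contravariant on $\bG$) must be tracked consistently in the direction of $A'(\phi)$; you have them right as written, but this is where such a proof most easily goes wrong.
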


\begin{definition}
	An \emph{extension over $\Theta_0^\op$} is a functor $H:\Theta_0^\op\to C$ which preserves globular products. A map of extensions from $H:\Theta_0^\op\to C$ to another $K:\Theta_0^\op\to D$ is a map $\gamma:C\to D$ that satisfies
	\[
	\gamma\circ H=K.
	\]
	\end{definition}
	
\begin{notation}
	We write $\Ex_{\Theta_0^\op}$ to denote the category of extensions over $\Theta_0^\op$ and maps between them. Moreover, we write $\Th_{\Theta_0^\op}$ to denote the full subcategory of $\Ex_{\Theta_0^\op}$ whose objects are identity on objects functors. The objects of $\Th_{\Theta_0^\op}$ will be called \emph{globular theories}.
\end{notation}

\begin{remark}
	The categories $\Ex_{\Theta_0^\op}$ and $\Th_{\Theta_0^\op}$ are complete, cocomplete, and locally presentable. 
\end{remark}

\begin{definition}
	Let $A$ be a globular object of $C$.  An \emph{admissible pair} of $n$-cells in $A$ is a pair 
	\[
		\begin{tikzpicture}[node distance=2cm]
		\node (A) {$X$};
			\node (B)[right of=A]{$A(n)$};
	\draw[transform canvas={yshift=0.3ex},->] (A) to node[above=3] {$f$} (B);
			\draw[transform canvas={yshift=-0.3ex},->,swap](A) to node[below=3] {$g$} (B);
	\end{tikzpicture}
	\]
	of maps in $C$ where $X$ is a globular sum of $A$ and $\hgt(X)\leq n+1$ such that either $n=0$ or $s\circ f=s\circ g$ and $t\circ f=t\circ g$.
\end{definition}

\begin{definition}	
	Let $A$ be a globular object of $C$. A \emph{lift} for an admissible pair is an arrow $\delta_{f{,}g}:X\to A(n+1)$ such that
	\[
		\begin{tikzpicture}[node distance=2cm]
			\node (A) {$X$};
			\node (B)[right of=A]{$A(n)$};
			\node (C) [above of=B]{$A(n+1)$};
			\draw[transform canvas={yshift=0.3ex},->] (A) to node[above=3] {$f$} (B);
			\draw[transform canvas={yshift=-0.3ex},->,swap](A) to node[below=3] {$g$} (B);
			\draw[transform canvas={xshift=-0.3ex},->] (C) to node[left=3] {$s$} (B);
			\draw[transform canvas={xshift=0.3ex},->,swap](C) to node[right=3] {$t$} (B);
			\draw[->](A) to node[above=4] {$\delta_{f,g}$}(C);
		\end{tikzpicture}
		\]
	commutes. 
\end{definition}

\begin{definition}
The \emph{dimension} of an admissable pair of the form above is $\dim(f,g)=n$.
\end{definition}

\begin{definition}
	We say $A$ is \emph{contractible} if every admissable pair  has a lift.
\end{definition}

\begin{definition}
	We say that a globular theory  $J:\Theta_0^\op\to C$ is \emph{contractible} if it is contractible as a globular object of $C$.
\end{definition}

\begin{lemma}
	Given a map $F:C\to D$ of globular theories, $F$ maps admissible pairs to admissible pairs.
\end{lemma}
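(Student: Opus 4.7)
The plan is to unpack the definition of a map of globular theories and verify that each clause in the definition of an admissible pair is preserved. By definition, a map $F \colon C \to D$ in $\Th_{\Theta_0^\op}$ is a functor satisfying $F \circ J_C = J_D$, where $J_C \colon \Theta_0^\op \to C$ and $J_D \colon \Theta_0^\op \to D$ are the identity-on-objects structure functors. Consequently $F$ is itself identity on objects, and precomposing with $D^\op \colon \bG^\op \to \Theta_0^\op$ shows that the canonical globular objects $A_C = J_C \circ D^\op$ and $A_D = J_D \circ D^\op$ are related by $F \circ A_C = A_D$. In particular $F(A_C(n)) = A_D(n)$, the functor $F$ carries the source and target maps of $A_C$ to those of $A_D$, and $F$ fixes every table of dimensions as an object, thereby preserving its height.

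Given an admissible pair $f, g \colon X \to A_C(n)$ in $A_C$, I would then verify that $F(f), F(g) \colon X \to A_D(n)$ is an admissible pair in $A_D$. Since $F$ is identity on objects, $X$ remains a globular sum of $A_D$ with the same height, which is at most $n+1$ by hypothesis. If $n = 0$, no further condition is required. If $n > 0$, applying $F$ to the equalities $s_C \circ f = s_C \circ g$ and $t_C \circ f = t_C \circ g$ yields $F(s_C) \circ F(f) = F(s_C) \circ F(g)$, together with the analogous equation for $t$; but $F(s_C) = s_D$ and $F(t_C) = t_D$ by the compatibility noted above, so $(F(f), F(g))$ satisfies the boundary compatibility condition required of an admissible pair in $A_D$ of dimension $n$.

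I do not expect a genuine obstacle here: the statement is essentially a bookkeeping consequence of $F$ being identity on objects and commuting with the $\Theta_0^\op$-structure. The only point that requires care is making the identifications $F \circ A_C = A_D$ and $F(X) = X$ precise, and both follow immediately from the defining commutative triangle of a map of globular theories.
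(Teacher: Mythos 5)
Your proof is correct and follows essentially the same route as the paper's: the key step in both is applying $F$ to the boundary equations $s\circ f = s\circ g$ and $t\circ f = t\circ g$ and using that $F$, being a map under $\Theta_0^\op$, commutes with the source and target maps. Your version is somewhat more thorough in also spelling out why the domain remains a globular sum of the correct height, which the paper leaves implicit.
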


\begin{proof}
Suppose 
\[
		\begin{tikzpicture}[node distance=2cm]
		\node (A) {$\vec{p}$};
			\node (B)[right of=A]{$k$};
	\draw[transform canvas={yshift=0.3ex},->] (A) to node[above=3] {$f$} (B);
			\draw[transform canvas={yshift=-0.3ex},->,swap](A) to node[below=3] {$g$} (B);
	\end{tikzpicture}
	\]
is an admissible pair in $C$. Then 
\[
s\circ F(f)=F(s\circ f)=F(s\circ g)=s\circ F(g)
\]
and 
\[
t\circ F(f)=F(t\circ f)=F(t\circ g)=t\circ F(g),
\]
so that 
\[
		\begin{tikzpicture}[node distance=2cm]
		\node (A) {$\vec{p}$};
			\node (B)[right of=A]{$k$};
	\draw[transform canvas={yshift=0.3ex},->] (A) to node[above=3] {$Ff$} (B);
			\draw[transform canvas={yshift=-0.3ex},->,swap](A) to node[below=3] {$Fg$} (B);
	\end{tikzpicture}
	\]
	is admissible in $D$.
\end{proof}

\begin{definition}\label{eq:8}
	Let $J:\Theta^\op\to C$ be a globular theory. We say that $C$ is an \emph{$(\infty,0)$-coherator} if $J$ is contractible and if there is a diagram of the form 
	\[
	C_0=\Theta_0^\op\to C_1\to C_2\to\cdots\to C_n\to\cdots
	\]
	where $C_n\to C_{n+1}$ is a morphism of globular theories together with a set $U_n$ of admissable pairs of $C_n$ such that $C_{n+1}$ is obtained from $C_n$ by formally adding a lift to each admissable pair in $U_n$ for $n\geq 0$ and $C$ is the colimit of the diagram. 
\end{definition}

\begin{notation}
Let $C$ be an $(\infty,0)$-coherator.  The category $\inftygpd_C=\Mod_{\Theta_0^\op}(C)$ is the full subcategory of $[C,\Set]$ containing the globular product preserving functors. The objects are called \emph{Grothendieck $\infty$-groupoids} or \emph{$\infty$-groupoids}, for short.
\end{notation} 

\section{Kelly's Small Object Argument}
We now present a factorization argument that we call \emph{Kelly's Small Object Argument}. We begin this section by providing some exposition on algebraic weak factorization systems. Given a set of maps $I$, with small domains and targets, in a category $C$ with all pullbacks, we construct the reduced factorization of a map (see \ref{RFA_construct_on_ob}). We finish this section by constructing a \emph{strong factorization system} (see \ref{sfs})  from every set of maps, with small domains and targets, in a locally finite presentable category (see \ref{choice_OFS}).  The argument that builds our strong factorization system is called Kelly's Small Object Argument.

\begin{definition}\label{AWFS} (Garner, Definition 2.2 of \cite{Garn}) Let $(\mathcal{L},\mathcal{R})$ be a wfs on a category $C$.  An \emph{algebraic realisation} of $(\mathcal{L},\mathcal{R})$ is given by the following pieces of data:
	\begin{itemize}
		\item  the wfs is given by a functorial factorisation system $(L,E,R)$;
		\item for each commutative square 
		\[
	\begin{tikzpicture}[node distance=1.5cm,auto]
		\node (A) {$u$};
		\node (B)[right of=A] {$w$};
		\node(C)[below of=A] {$v$};
		\node (D)[right of=C] {$x$};
		\draw[->] (A) to node {$h$}(B);
		\draw[->,swap] (A) to node{$f$}(C);
		\draw[->] (B) to node{$g$}(D);
		\draw[->,swap] (C) to node {$k$}(D);
	\end{tikzpicture}
	\]
		there is a choice of filler 
\[
\begin{tikzpicture}[node distance=2cm]
	\node(A){$u$};
	\node (B)[below of=A]{$Ef$};
	\node (C)[right of=A]{$w$};
	\node (D)[right of=C]{$Eg$};
	\node (E)[right of=B]{$v$};
	\node (F)[right of=E]{$x$};
	\draw[->](A) to node[left=3]{$Lf$} (B);
	\draw[->](A) to node[above=3]{$h$} (C);
	\draw[->](C) to node[above=3]{$Lg$} (D);
	\draw[->](B) to node[below=3]{$Rf$}(E);
	\draw[->](E) to node[below=3]{$k$}(F);
	\draw[->](D) to node[right=3]{$Rg$}(F);
	\draw[->](B) to node[above=3]{$E(h,k)$}(D);
\end{tikzpicture};
\]
\item for each $f: x\to y$ in $C$, there are choices of
		fillers for the following squares:
			\label{dist_law_comps}
			\[
	\begin{tikzpicture}[node distance=2cm,auto]
		\node (A) {$x$};
		\node (B)[right of=A] {$ELf$};
		\node(C)[below of=A] {$Ef$};
		\node (D)[right of=C] {$Ef$};
		\draw[->] (A) to node {$L^2f$}(B);
		\draw[->,swap] (A) to node{$Lf$}(C);
		\draw[->] (B) to node{$RLf$}(D);
		\draw[->,swap] (C) to node {$1_{Ef}$}(D);
		\draw[->,dotted](C) to node[left=3]{$\sigma_f$}(B);
	\end{tikzpicture}
	 \qquad \text{and} \qquad
	\begin{tikzpicture}[node distance=2cm,auto]
		\node (A) {$Ef$};
		\node (B)[right of=A] {$Ef$};
		\node(C)[below of=A] {$ERf$};
		\node (D)[right of=C] {$y$};
		\draw[->] (A) to node {$1_{Ef}$}(B);
		\draw[->,swap] (A) to node{$LRf$}(C);
		\draw[->] (B) to node{$Rf$}(D);
		\draw[->,swap] (C) to node {$R^2f$}(D);
		\draw[->,dotted](C) to node[left=3]{$\pi_f$}(B);
	\end{tikzpicture}
\]
\end{itemize}
	subject to the following axioms:
	\begin{itemize}
		\item the assignation $f \mapsto Lf$ extends to the functor
		part of a comonad $L$ on $\Arr(C)$ whose
		counit map at $f$ is 
		\[
		(1, Rf): Lf \to f
		\]
		and whose comultiplication $\Delta$ is 
		\[
		(1, \sigma_f):Lf \to L^2f;
		\]
		\item the assignation $f \mapsto Rf$ extends to the functor
		part of a monad $R$ on $\Arr(C)$ whose
		unit map at $f$ is $(Lf,1):f \to R f$
		and whose multiplication $\mu$ is $(\pi_f, 1):
		R^2f \to Rf$;
		\item the natural transformation (combined by putting together the lower right square of the first diagram and the upper left square in the horizontally adjacent squares right above) $\delta:LR \Rightarrow RL:
		\Arr(C) \to \Arr(C)$ whose component at $f$
		is $(\sigma_f, \pi_f):LRf \to RLf$ describes a distributive law between $L$ and $R$ and satisfies $\text{dom}\delta=\text{cod}\Delta$ and $\text{cod}\delta=\text{dom}\mu$.
	\end{itemize}
\end{definition}

\begin{definition}
Let $C$ be a category together with all the structure specified by Definition \ref{AWFS} on it. Then we call $(L,E,R,\delta)$ an \emph{algebraic weak factorization system} or \emph{AWFS}, for short.
\end{definition}

\begin{proposition}\label{universalrealisation} (Garner, Proposition 2.3 of \cite{Garn})
	Let $C$ be a category and let $I$ be a set of maps in
	$C$. Suppose that $C$ is locally finite presentable. Then the w.f.s.\ $(\mathcal{L}, \mathcal{R})$ cofibrantly generated by $I$ has a universally determined algebraic realisation $(L,E,R,\delta)$. 
\end{proposition}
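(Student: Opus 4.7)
The plan is to apply Garner's transfinite algebraic small object argument, which freely generates an AWFS from a set of arrows using the local presentability of $C$. The key observation is that the set $I$, viewed as a discrete subcategory of $\Arr(C)$, determines a pointed endofunctor on $\Arr(C)$ whose free monad, when it exists, furnishes the right half of the desired AWFS; the comonad and distributive law then read off from the bookkeeping of the construction.

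First, I would build a pointed endofunctor $R_1$ on $\Arr(C)$ by setting, for each arrow $f\colon x\to y$, $R_1 f$ to be the right factor in the canonical factorization of $f$ through the pushout of $\coprod_{(i,\square)\in I\downarrow f} i$, where the coproduct is indexed by all commutative squares from generators $i\in I$ into $f$. The universal map into the pushout gives the unit $f\to R_1 f$ in $\Arr(C)$. Because $C$ is locally finitely presentable, one can fix a regular cardinal $\kappa$ strictly greater than the presentability rank of every domain and codomain occurring in $I$; iterating $R_1$ through the ordinals below $\kappa$, taking colimits at limit stages, produces a transfinite sequence
\[
\mathrm{Id}\;\Rightarrow\;R_1\;\Rightarrow\;R_1 R_1\;\Rightarrow\;\cdots
\]
whose colimit $R$ stabilises at stage $\kappa$ because the generators remain $\kappa$-presentable in $\Arr(C)$. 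Kelly's free monad theorem then promotes the pointed endofunctor $R$ to a monad on $\Arr(C)$.

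Next, I would extract the comonad and distributive law. The functorial factorization $(L,E,R)$ associated to $R$ sends $f$ to the pair $(Lf,Rf)$; because $Ef$ is built from $\mathrm{dom}(f)$ by an iterated pushout of coproducts of generators, the comultiplication $\sigma_f\colon Lf\to L^2 f$ is determined by identifying each cell of $Lf$ added at stage $\alpha$ with the corresponding cell appearing in the expansion of $L^2 f$. Dually, $\pi_f\colon ERf\to Ef$ records that $Rf$ already carries a canonical $R$-algebra structure because every lifting square with a generator on the left against $Rf$ has already been solved at some stage of the construction. Setting $\delta=(\sigma,\pi)$ yields a distributive law of $L$ over $R$, and the AWFS axioms of Definition \ref{AWFS} reduce to coherence checks on iterated pushouts that follow formally from the freeness of the construction.

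The hardest part will be verifying that the transfinite sequence genuinely stabilises at stage $\kappa$ and that $(\sigma_f,\pi_f)$ satisfies the hexagon of Definition \ref{common_over_mon}: the former requires that the colimit in $\Arr(C)$ commutes with hom-ing out of the $\kappa$-presentable generators, which follows from local presentability but must be set up carefully for comma squares $I\downarrow(-)$; the latter follows from the fact that $\sigma$ and $\pi$ are both defined by the same canonical identifications on the underlying pushout data. The universal property — that $(L,E,R,\delta)$ is initial among algebraic realisations of cofibrantly generated w.f.s.'s extending $I\hookrightarrow \Arr(C)$ — is then a formal consequence, because at every stage the pushout is the universal way to attach solutions to the lifting problems posed by $I$, so the transfinite sequence computes the free AWFS on the discrete category $I$ over $\Arr(C)$.
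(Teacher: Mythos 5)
The paper offers no proof of this proposition at all: it is quoted verbatim as Proposition 2.3 of Garner's paper and used as a black box, so any comparison is really between your sketch and Garner's published argument.

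Measured against that, your sketch contains one genuine gap, and it sits exactly at the point where Garner's contribution lies. You claim that transfinitely iterating the one-step factorization $R_1$ (pushout of the coproduct of all lifting squares from $I$ into $f$) yields a sequence $\mathrm{Id}\Rightarrow R_1\Rightarrow R_1R_1\Rightarrow\cdots$ that \emph{stabilises} at a regular cardinal $\kappa$ bounding the presentability ranks of the (co)domains of $I$. This is false: each newly attached cell creates fresh lifting problems against the generators, so the naive iteration never converges --- this is precisely why Quillen's small object argument must be artificially truncated at a large ordinal, and why the resulting pointed endofunctor carries no monad multiplication. Garner's algebraic small object argument does \emph{not} iterate $R_1$; it computes the \emph{free monad on the pointed endofunctor} $R_1$ via Kelly's transfinite construction, whose successor stages interleave the pushouts with coequalizers (reflections into a suitable subcategory) that identify redundantly attached cells, and it is this corrected sequence whose convergence is guaranteed by local presentability. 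Invoking ``Kelly's free monad theorem'' after asserting convergence of the naive iteration puts the cart before the horse: the free-monad machinery \emph{is} the convergent construction, not a post hoc upgrade of a colimit you have already taken. Your description of where $\sigma_f$ and $\pi_f$ come from is morally right once the construction is set up correctly, but as written the central convergence claim would fail, so the argument does not go through without replacing the iteration scheme by Garner's.
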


\begin{definition}
We call $(L,E,R,\delta)$ the \emph{AWFS generated by $I$} and the statement is called the \emph{algebraic small object argument} or \emph{ASOA} for short.
\end{definition}

\begin{remark}\label{fac_condtns}
Let $C$ be a category and $I$ be a set of maps that satisfies the criterion of Proposition \ref{universalrealisation}. Given a morphism $f:e\to d$ in $C$, we are given
\begin{itemize}
\item a factorization 
\[
e\xrightarrow{Lf}Ef\xrightarrow{Rf}d
\]
of $f$ in $C$
\item a choice of lift $\delta_i(p,q):t(i)\to w$ for all diagrams
\[
	\begin{tikzpicture}[node distance=2cm,auto]
		\node (A) {$s(i)$};
		\node (B)[right of=A] {$w$};
		\node(C)[below of=A] {$t(i)$};
		\node (D)[right of=C] {$d$};
		\draw[->] (A) to node {$p$}(B);
		\draw[->,swap] (A) to node{$i$}(C);
		\draw[->] (B) to node{$h$}(D);
		\draw[->,swap] (C) to node {$q$}(D);
	\end{tikzpicture}
	\]
that commute where
\begin{itemize}
	\item $i\in I$
	\item  the maps
	\[
	p:s(i)\to w
	\]
	\[
	q:t(i)\to d
	\]
are morphisms in $C$.
\end{itemize}
\end{itemize}
\end{remark}

\begin{lemma}\label{univ_prop_of_AWFS_fact}
Let $C$ be a category and let $I$ be a set of maps in $C$ that satisfies the criterion of Proposition \ref{universalrealisation}. Let $(L,E,R,\delta)$ be the AWFS generated by $I$. generated by $I$. Moreover, let $f:e\to d$ be a morphism of $C$ together with a factorization 
\[
e\xrightarrow{g}w\xrightarrow{h}d
\]
of $f$ in $C$ that satisfies the criterion of Remark \ref{fac_condtns}. Then there is a unique map $g':Ef\to w$ such that

\begin{itemize}
\item the diagram
\[
\begin{tikzpicture}[node distance=2cm]
\node (A){$c$};
\node(B)[below of=A]{$Ef$};
\node (C)[below of=B]{$d$};
\node (D)[right of=B]{$w$};
\draw[->](A) to node[left=3]{$Lf$}(B);
\draw[->](B) to node[left=3]{$Rf$}(C);
\draw[->](A) to node[above=3]{$g$}(D);
\draw[->](B) to node[above=3]{$g'$}(D);
\draw[->](C) to node[below=3]{$h$}(D);
\end{tikzpicture}
\]
commutes
\item the diagram
\[
\begin{tikzpicture}[node distance=2cm]
\node(B){$t(i)$};
\node (C)[right of=B]{$Ef$};
\node (D)[below of=C]{$w$};
\draw[->](B) to node[above=3]{$\delta_i(p,q)$}(C);
\draw[->](B) to node[left=3]{$\phi_i(p,q)$}(D);
\draw[->](C) to node[right=3]{$g'$}(D);
\end{tikzpicture}
\]
commutes for $i\in I$ and pairs $(p,q)$.
\end{itemize}
\end{lemma}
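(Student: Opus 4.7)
The plan is to produce $g'$ explicitly as the transfinite colimit of a tower of maps obtained from the stepwise construction of $Ef$ provided by Garner's algebraic small object argument. Recall that $Ef$ arises as the colimit of a (possibly transfinite) sequence $e = X_0 \to X_1 \to \cdots \to X_\alpha \to \cdots$ in which $X_{\alpha+1}$ is the pushout of $X_\alpha$ along the coproduct of maps $i \in I$ indexed by all commutative squares $(p \colon s(i) \to X_\alpha,\, q \colon t(i) \to d)$ that commute with the canonical map $X_\alpha \to d$; the map $Lf$ is the transfinite composite and $Rf$ is induced in the colimit from the compatible maps $X_\alpha \to d$.

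First I would define, by transfinite induction, a compatible cocone $g'_\alpha \colon X_\alpha \to w$ satisfying the relation that $h \circ g'_\alpha$ equals the canonical map $X_\alpha \to d$. I set $g'_0 := g$, which works by the hypothesis on $(g,h)$. At a successor stage $\alpha + 1$, each attaching lifting problem $(i, p, q)$ at stage $\alpha$ yields, after post-composing $p$ with $g'_\alpha$, a lifting problem of $i$ against $h$; by the hypothesis that the factorization $(g,h)$ itself carries chosen fillers, this admits the canonical filler $\phi_i(g'_\alpha \circ p,\, q) \colon t(i) \to w$. The universal property of the pushout defining $X_{\alpha+1}$ then produces a unique $g'_{\alpha+1}$ extending $g'_\alpha$ and sending the freshly attached cell $\delta_i(p,q)$ to $\phi_i(g'_\alpha \circ p,\, q)$. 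Limit stages pose no difficulty by the universal property of filtered colimits, and the induced $g' \colon Ef \to w$ then satisfies $g' \circ Lf = g$, $h \circ g' = Rf$, and $g' \circ \delta_i(p,q) = \phi_i(p,q)$ by construction.

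For uniqueness, any other $g''$ satisfying the same two conditions must agree with $g'$ on every $X_\alpha$: the value on $X_0 = e$ is forced to be $g$ by the first diagram, and on the cells of $X_{\alpha+1}$ attached at stage $\alpha$ the second diagram pins the value of $g''$ on each $\delta_i(p,q)$ down to $\phi_i(p,q)$, so the universal property of the pushout forces $g''|_{X_{\alpha+1}} = g'|_{X_{\alpha+1}}$. The main technical point that will require care is not the induction itself but the bookkeeping of the indexing scheme: the pairs $(p, q)$ appearing as arguments to $\phi_i$ in the statement must be identified with the images along the $g'_\alpha$ of the attaching data for the cells of $Ef$, so that the second compatibility equation unambiguously matches the recursively chosen fillers. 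Verifying that the construction of $Ef$ is genuinely presented by such an accessible cellular filtration (as opposed to Garner's refined ``double-construction'' version designed to produce a monad structure outright) is the one subtle step; once that is in place the rest of the argument is mechanical.
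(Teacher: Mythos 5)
Your proposal is correct and is essentially an unpacking of the paper's own proof, which simply invokes the universal property of the colimits in Garner's construction of $Ef$. The one step you flag yourself --- that Garner's successor stage is not a bare pushout of attaching maps but a pushout followed by a coequalizer identifying redundant fillers (the ``free monad'' refinement) --- resolves without difficulty: the cocone compatibility $g'_{\alpha+1}\circ\iota_\alpha = g'_\alpha$ forces the chosen fillers $\phi_i(g'_\alpha\circ p,\,q)$ to agree literally on any two cells that the coequalizer identifies, so your maps descend to Garner's actual $X_{\alpha+1}$ and both the existence and the uniqueness arguments go through unchanged.
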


\begin{proof}
Just apply the universal property of the colimits applied by Garner in $\S$ 6 of \cite{Garn3} used to construct the AWFS.
\end{proof}

\begin{construction}\label{RFA_construct_on_ob}
Let $C$ be a cocomplete category with all pullbacks and let $I$ be a set of maps in
$C$. Suppose the domains and targets of the  morphisms of $I$ are sequentially small. Let $f:e\to d$ be a morphism of $C$. Define
\begin{itemize}
\item the set $K_f$ to consist of triples $(i,p,q)$
where
\begin{itemize}
\item $i\in I$
\item $p\in C(s(i),e)$
\item $q\in C(t(i),d)$
\end{itemize}
such that
\[
	\begin{tikzpicture}[node distance=2cm,auto]
		\node (A) {$s(i)$};
		\node (B)[right of=A] {$e$};
		\node(C)[below of=A] {$t(i)$};
		\node (D)[right of=C] {$d$};
		\draw[->] (A) to node {$p$}(B);
		\draw[->,swap] (A) to node{$i$}(C);
		\draw[->] (B) to node{$f$}(D);
		\draw[->,swap] (C) to node {$q$}(D);
	\end{tikzpicture}
	\]
commutes
\item the set $\mathpzc{lift}^f(i,p,q)$ consists of all lifts 
\[
w:t(i)\to e
\]
for the diagram
\[
	\begin{tikzpicture}[node distance=2cm,auto]
		\node (A) {$s(i)$};
		\node (B)[right of=A] {$e$};
		\node(C)[below of=A] {$t(i)$};
		\node (D)[right of=C] {$d$};
		\draw[->] (A) to node {$p$}(B);
		\draw[->,swap] (A) to node{$i$}(C);
		\draw[->] (B) to node{$f$}(D);
		\draw[->,swap] (C) to node {$q$}(D);
	\end{tikzpicture}
	\]
that commutes.
\end{itemize}
Now define a category $\Lambda^f$ to have object set
\[
\ob(\Lambda^f):=K_f\cup{1}
\]
and hom-sets
\[
\Lambda^f((i,p,q),(i',p',q'))=\begin{cases}
\{1_{i,p,q}\}&\text{ if }(i,p,q)=(i',p',q')\\
\emptyset&\text{ otherwise}
\end{cases},
\]
\[
\Lambda^f(1,1)={1_1},
\]
\[
\Lambda^f(1,(i,p,q))=\emptyset
\]
for all $(i,p,q)\in K_f$,
\[
\Lambda^f((i,p,q),1)=\mathpzc{lift}^f(i,p,q)
\]
for all $(i,p,q)\in K_f$. Define $F_f:\Lambda^f\to C$ by setting
\[
F_f((i,p,q))=t(i)
\]
for all  $(i,p,q)\in K_f$,
\[
F_f(1)=e,
\]
and
\[
F_f(w)=w
\]
for all $w\in \mathpzc{lift}^f(i,p,q)$. Let 
\[
(E_+(f),\Lambda^f,F_f,\kappa:F_f\Rightarrow *_{E_+(f)})
\]
be the colimit cone of the diagram $F_f:\Lambda^f\to C$. We set $L_+(f):e\to E_+(f)$ to be 
\[
L_+(f):=\kappa_1.
\]
\end{construction}

\begin{lemma}\label{univ_prop_of_red_fact}
Let $C$ be a category and $I$ be a set of maps that satisfy the hypotheses of Construction \ref{RFA_construct_on_ob}. Let $f:e\to d$ and $\gamma:e\to e'$ be maps. Suppose that for every diagram 
\[
	\begin{tikzpicture}[node distance=2cm,auto]
		\node (A) {$s(i)$};
		\node (B)[right of=A] {$e$};
		\node(C)[below of=A] {$t(i)$};
		\node (D)[right of=C] {$d$};
		\draw[->] (A) to node {$p$}(B);
		\draw[->,swap] (A) to node{$i$}(C);
		\draw[->] (B) to node{$f$}(D);
		\draw[->,swap] (C) to node {$q$}(D);
	\end{tikzpicture}
	\]
that commutes where $i\in I$ and every pair of lifts $\delta,\delta':t(i)\rightrightarrows e$, we have that
\[
\gamma\circ \delta=\gamma\circ\delta'.
\] 
Then there is a unique map $\lambda:E_+(f)\to e'$ such that 
\[
\lambda\circ L_+(f)=\gamma.
\]
\end{lemma}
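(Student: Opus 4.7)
The plan is to invoke the universal property of the colimit $(E_+(f), \kappa)$ from Construction \ref{RFA_construct_on_ob}. Concretely, I will produce a cocone on $F_f : \Lambda^f \to C$ with apex $e'$ whose leg at the object $1 \in \Lambda^f$ equals $\gamma$. The colimit then supplies a unique factorization $\lambda : E_+(f) \to e'$ compatible with all the cocone legs, and in particular $\lambda \circ L_+(f) = \lambda \circ \kappa_1 = \gamma$.

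To assemble the cocone, I would set $\zeta_1 := \gamma$ and, for each $(i,p,q) \in K_f$, pick any lift $w \in \mathpzc{lift}^f(i,p,q)$ and define $\zeta_{(i,p,q)} := \gamma \circ w$. The only non-identity morphisms of $\Lambda^f$ are the lifts $w : (i,p,q) \to 1$ together with $F_f(w) = w$, so the cocone axiom at $w$ reduces to the equation $\zeta_{(i,p,q)} = \zeta_1 \circ w = \gamma \circ w$. This is exactly what the hypothesis asserts, independently of the choice of lift, so $\zeta_{(i,p,q)}$ is well-defined and the cocone identities hold; the universal property of the colimit then supplies a map $\lambda$ with the required property.

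For uniqueness, suppose $\lambda'$ is another map with $\lambda' \circ L_+(f) = \gamma$. Then $x \mapsto \lambda' \circ \kappa_x$ is a cocone on $F_f$ whose leg at $1$ is $\gamma = \zeta_1$; and at $(i,p,q)$ its leg is $\lambda' \circ \kappa_{(i,p,q)} = \lambda' \circ \kappa_1 \circ w = \gamma \circ w = \zeta_{(i,p,q)}$ for any chosen lift $w$. Hence this cocone agrees with $\zeta_\bullet$, and the colimit's universal property forces $\lambda' = \lambda$.

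The single nontrivial point is the well-definedness of $\zeta_{(i,p,q)}$, which is precisely the content of the hypothesis on $\gamma$; the rest is a formal unravelling of the universal property of a colimit in $C$. The uniqueness step additionally rests on each $\kappa_{(i,p,q)}$ factoring through $\kappa_1 = L_+(f)$ via some lift $w$, i.e.\ on $\mathpzc{lift}^f(i,p,q)$ being nonempty, which is tacit in the intended applications of Construction \ref{RFA_construct_on_ob}.
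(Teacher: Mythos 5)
Your proof is correct and takes essentially the same route as the paper, whose entire proof is the one-line remark that the lemma is ``just the interpretation of the colimit of Construction \ref{RFA_construct_on_ob}''; your cocone with apex $e'$, legs $\zeta_1=\gamma$ and $\zeta_{(i,p,q)}=\gamma\circ w$, is precisely that interpretation spelled out. Your closing caveat that both well-definedness of the legs and uniqueness of $\lambda$ tacitly require each $\mathpzc{lift}^f(i,p,q)$ to be nonempty is a real subtlety the paper glosses over, though it is satisfied in the intended application where the construction is only ever applied to maps of the form $R(f)$.
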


\begin{proof}
This is just the interpretation of the colimit of Construction \ref{RFA_construct_on_ob}.
\end{proof}

By the universal property of Lemma \ref{univ_prop_of_red_fact}, there is a unique map $R_+(f):E_+(f)\to d$ such that
\[
f=R_+(f)\circ L_+(f).
\]
This gives us a factorization
\[
e\xrightarrow{L_+(f)}E_+(f)\xrightarrow{R_+(f)}d
\]
of $f$ that we call the \emph{reduced factorization of $f$}. 

\begin{definition}
Let $f:x\to y$ and $g:a\to b$ be two morphisms in $C$. We say that the ordered pair $(f, g)$ is \emph{uniquely orthogonal}, denoted by $f\downarrow g$, provided that any commutative diagram
	\[
	\begin{tikzpicture}[node distance=2cm,auto]
		\node (A) {$x$};
		\node (B)[right of=A] {$a$};
		\node(C)[below of=A] {$y$};
		\node (D)[right of=C] {$b$};
		\draw[->] (A) to node {}(B);
		\draw[->,swap] (A) to node{$f$}(C);
		\draw[->] (B) to node{$g$}(D);
		\draw[->,swap] (C) to node {}(D);
	\end{tikzpicture}
	\]
	admits a unique lifting.
\end{definition}

\begin{notation}
Let $C$ be a category and $H$ be a set of maps of $C$. We write 
\[
H^{\downarrow}:=\{f\in\mor(C):h\downarrow f\text{ for all }h\in H\}
\]
and 
\[
 H^{\uparrow}:=\{f\in\mor(C):f\downarrow h\text{ for all }h\in H\}
\]
\end{notation}

\begin{definition}
Let $(\zL,\zR)$ be a weak factorization system on a category $C$. We say it is an \emph{orthogonal factorization system} (or \emph{OFS} for short) if every lifting problem has a unique solution.
\end{definition}

\begin{theorem}(Kelly, Theorem 11.3 of \cite{Kell})\label{Kelly_OFS}
Let $C$ be a category that has all pullbacks and $\kappa$ be a regular cardinal. Moreover, let $H$ be a small set of morphisms of $C$ such that the domains and targets of each morphism are $\kappa$-compact objects. Then $(H^{\uparrow\downarrow},H^{\downarrow})$ is an OFS.
\end{theorem}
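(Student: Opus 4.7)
\emph{Proof plan.} Since both classes in the pair $(H^{\uparrow\downarrow}, H^{\downarrow})$ are defined by orthogonality, existence and uniqueness of lifts across the two classes are automatic from the definitions. The entire content of the theorem is therefore the existence of factorizations $f = R \circ \iota$ with $R \in H^{\downarrow}$ and $\iota \in H^{\uparrow\downarrow}$. My plan is to produce such a factorization by iterating the reduced factorization of Construction \ref{RFA_construct_on_ob} transfinitely up to $\kappa$, exploiting $\kappa$-compactness to guarantee that the result has the desired orthogonality properties.

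For $f : e \to d$, set $f_0 := f$, $E_0 := e$, and inductively define $E_{\alpha+1} := E_+(f_\alpha)$, $f_{\alpha+1} := R_+(f_\alpha)$, with transition map $\iota_\alpha := L_+(f_\alpha) : E_\alpha \to E_{\alpha+1}$; at limit ordinals $\lambda \leq \kappa$ take directed colimits, with $f_\lambda$ induced by the universal property. Put $R := f_\kappa$ and let $\iota : e \to E_\kappa$ be the transfinite composition of the $\iota_\alpha$. To verify $R \in H^{\downarrow}$, consider a lifting problem against $R$ with $i \in H$; both $s(i) \to E_\kappa$ and $t(i) \to d$ factor through a common stage $\alpha < \kappa$ by $\kappa$-compactness of $s(i)$ and $t(i)$ together with the regularity of $\kappa$. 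The construction then provides a formal lift $t(i) \to E_{\alpha+1}$ solving the induced square against $f_\alpha$ (the map $\kappa_{(i,p,q)}$ of Construction \ref{RFA_construct_on_ob}), whose image in $E_\kappa$ yields the required lift against $R$. Uniqueness is enforced because any two candidate lifts against $R$ factor through a common earlier stage $\beta$, where Lemma \ref{univ_prop_of_red_fact} forces them to agree in $E_{\beta+1}$ and hence in $E_\kappa$.

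For $\iota \in H^{\uparrow\downarrow}$, I would use that $\iota$ is a transfinite composite whose successor steps are the maps $L_+(f_\alpha)$, each expressible as a colimit of coproducts of pushouts of elements of $H$ via the diagram of Construction \ref{RFA_construct_on_ob}, and hence in $H^{\uparrow\downarrow}$. Since the left class of any orthogonality relation is closed under transfinite composites, pushouts, and coproducts, the composite $\iota$ belongs to $H^{\uparrow\downarrow}$.

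The main obstacle I expect is the bookkeeping at limit stages: one must verify that the universal property of the reduced factorization established at each successor $E_{\alpha+1}$ survives passage to the directed colimit, so that $R = f_\kappa$ inherits both the existence and the uniqueness of lifts against $H$. Regularity of $\kappa$ is essential here because it guarantees that $\kappa$-compact objects commute with the $\kappa$-indexed filtered colimits we are forming, which is precisely what allows any lifting problem against $R$ to be pushed down to an earlier stage where the reduced factorization directly solves it uniquely.
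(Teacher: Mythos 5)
The paper does not prove this statement at all: it is imported verbatim as Theorem 11.3 of Kelly's paper, and the surrounding constructions (\ref{RFA_construct_on_ob}, \ref{Kelly's_soa_funct_fact}) \emph{use} it rather than establish it. So your proposal can only be judged on its own merits and against Kelly's actual argument. Your overall strategy --- transfinitely iterate a one-step construction up to $\kappa$, then use $\kappa$-compactness of the domains and codomains of $H$ together with regularity of $\kappa$ to descend both lifting problems and pairs of lifts against $f_\kappa$ to an earlier stage where they are resolved --- is exactly the right shape and is essentially Kelly's $k_+$-iteration. The closing remarks (orthogonality of the two classes is definitional, $\mathcal{L}^{\downarrow}\subseteq\mathcal{R}$ via the retract argument) are standard and fine to gloss.

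The genuine gap is in your successor step. You set $E_{\alpha+1}:=E_+(f_\alpha)$ and claim that the coprojection $\kappa_{(i,p,q)}:t(i)\to E_+(f_\alpha)$ ``solves the induced square.'' It does not. The diagram $\Lambda^{f_\alpha}$ of Construction \ref{RFA_construct_on_ob} contains the objects $t(i)$ and $e$ and, as its only non-identity morphisms, the \emph{already existing} lifts $t(i)\to e$; it contains no copy of $s(i)$, no morphism encoding $i$, and no morphism encoding $p$. Consequently nothing in the colimit forces $\kappa_{(i,p,q)}\circ i=L_+(f_\alpha)\circ p$ or $R_+(f_\alpha)\circ\kappa_{(i,p,q)}=q$: for a lifting problem with no solution at stage $\alpha$, the construction merely adjoins a detached copy of $t(i)$, and the problem remains unsolved at stage $\alpha+1$ (and at stage $\kappa$). $E_+$ is purely the \emph{identification} half of Kelly's step --- it coequalizes existing lifts --- which is precisely why the paper only ever applies it after Garner's $R$, which supplies the lifts. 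To repair your proof you must interleave a genuine attachment step (push out $\coprod_{(i,p,q)}i$ along $\coprod p$, or use the ASOA step as in Construction \ref{Kelly's_soa_funct_fact}) with the coequalization, or enlarge $\Lambda^f$ to include the $s(i)$'s and the attaching maps. A secondary inaccuracy: the identification step is not ``a colimit of pushouts of elements of $H$'' --- it is a pushout along the codiagonal $t(i)\sqcup_{s(i)}t(i)\to t(i)$ --- so to conclude $\iota\in H^{\uparrow\downarrow}$ you must also invoke closure of the left class of a \emph{unique}-lifting relation under codiagonals of its members, a point specific to orthogonal (as opposed to weak) factorization systems.
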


\begin{remark}
Here $\kappa$-compact object refers to an object whose induced representable functor preserves $\kappa$-filtered colimits. 
\end{remark}

\begin{lemma}\label{choice_OFS}
	Let $C$ be a category and let $(\zL,\zR)$ be an OFS on $C$. Giving a choice of factorization for every map in $C$ is equivalent to upgrading the underlying weak factorization system to an AWFS where solutions to lifting problems are unique. 
\end{lemma}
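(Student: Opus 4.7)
The forward implication is essentially by definition: an AWFS $(L,E,R,\delta)$ with underlying wfs $(\zL,\zR)$ records a functorial factorization $(L,E,R)$, which in particular gives a choice of factorization $f = Rf \circ Lf$ for every map $f$ of $C$.

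For the substantive direction, suppose we are given a choice of factorization $f = Rf \circ Lf$, with $Lf \in \zL$ and $Rf \in \zR$, for every morphism $f$ of $C$. The plan is to produce all the remaining AWFS data as the unique solutions to OFS lifting problems. Specifically, for each morphism $(h,k): f \to g$ in $\Arr(C)$, I would define $E(h,k) : Ef \to Eg$ to be the unique lift in the square with left edge $Lf$ and right edge $Rg$; for each $f$, I would define $\sigma_f : Ef \to ELf$ to be the unique lift in the square with left edge $Lf$ and right edge $RLf$, and $\pi_f : ERf \to Ef$ to be the unique lift in the square with left edge $LRf$ and right edge $Rf$. In each case the left leg lies in $\zL$ and the right leg lies in $\zR$, so the OFS assumption supplies exactly one such lift.

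The remaining work is to verify the AWFS axioms: functoriality of $E$ on $\Arr(C)$, the comonad laws for $L$ with comultiplication $(1, \sigma_f)$, the monad laws for $R$ with multiplication $(\pi_f, 1)$, and the two axioms governing the distributive law $\delta$. Each such axiom asserts the equality of two parallel arrows which both arise as fillers of a common commutative square whose left leg lies in $\zL$ and whose right leg lies in $\zR$; uniqueness of OFS lifts then collapses the two to one. The main obstacle will be purely organizational: for each axiom one must identify the correct ambient lifting problem whose unique solution simultaneously witnesses both sides of the identity being asserted.

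Finally, to see that the two passages are mutually inverse, observe that going from a choice of factorization to an AWFS and then forgetting back to the underlying factorization recovers the original choice by construction. Conversely, starting from an AWFS whose underlying wfs is the OFS and whose chosen fillers are the unique OFS lifts, forgetting down to the bare choice of factorization and re-upgrading by the procedure above returns the same AWFS, since the original values of $E(h,k)$, $\sigma_f$, and $\pi_f$ already solve the very lifting problems used in the re-upgrade and therefore coincide with the unique solutions assigned there.
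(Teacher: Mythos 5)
Your argument is correct in outline, but it takes a different route from the paper: the paper disposes of this lemma in one line by citing Proposition 2.8 of Garner's work, whereas you reconstruct the content of that proposition directly. The two approaches buy different things. The citation keeps the paper short and places the burden of the axiom-checking on an external reference; your construction makes the lemma self-contained and, more importantly, makes visible \emph{why} the result holds --- every piece of AWFS data ($E(h,k)$, $\sigma_f$, $\pi_f$) is the unique filler of a square whose left leg lies in $\zL$ and whose right leg lies in $\zR$, and every axiom (functoriality of $E$ on $\Arr(C)$, the comonad and monad laws, the distributive-law diagrams) is an equation between two parallel maps that are both fillers of one such square, hence coincide. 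This is exactly the mechanism the rest of the paper exploits repeatedly (compare the proofs of Lemmas \ref{distr_tringle_2}--\ref{yang_baxt_gpds}), so spelling it out here is arguably more in the spirit of the paper than the citation is. Two small points to make explicit if you write this up in full: first, the ``choice of factorization'' must be a choice of $(\zL,\zR)$-factorization (i.e.\ $Lf\in\zL$ and $Rf\in\zR$), since otherwise the lifting problems you pose need not have unique solutions; second, when verifying the distributive-law diagrams you are comparing morphisms of $\Arr(C)$, i.e.\ pairs of maps, so you must check that \emph{each component} of each side is a filler of a common square of the required form --- this always works out because $\zL$ and $\zR$ are closed under composition, but it is the one place where the ``purely organizational'' work could hide an error if done carelessly.
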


\begin{proof}
This is a reinterpretation of Proposition 2.8 from Garner in \cite{Garn3}.
\end{proof}

\begin{definition}\label{sfs}
A \emph{strong factorization system} is an AWFS where solutions to lifting problems are unique.
\end{definition}

\begin{construction}\label{Kelly's_soa_funct_fact}
Let $C$ be a locally finite presentable category and $I$ be a set of morphisms whose domains and targets are sequentially small. Define a functorial factorization system 
\[(\ol{L},\ol{E},\ol{R}):\Arr(C)\to \Arr(C)\times_C\Arr(C)
\]
 on objects where
\[
\ol{E}(f)=E_+(R(f)),
\]
\[
\ol{L}(f):=L_+(R(f))\circ L(f),
\]
and
\[
\ol{R}(f)=R_+(R(f)).
\]
Suppose that
\[
	\begin{tikzpicture}[node distance=2cm,auto]
		\node (A) {$e$};
		\node (B)[right of=A] {$w$};
		\node(C)[below of=A] {$d$};
		\node (D)[right of=C] {$x$};
		\draw[->] (A) to node {$h$}(B);
		\draw[->,swap] (A) to node{$f$}(C);
		\draw[->] (B) to node{$g$}(D);
		\draw[->,swap] (C) to node {$k$}(D);
	\end{tikzpicture}
	\]
commutes. There is a choice of filler 
\[
E(h,k):Ef\to Eg
\]
such that the diagram
\[
\begin{tikzcd}
e \ar[r,"h"] \ar[dd,swap,"Lf"] & w\ar[r,"Lg"]&
Eg \ar[dd,"Rg"] \\
\\
Ef \ar[r,swap,"Rf"] \ar[uurr,"E(h{,} k)"] &d\ar[r,swap,"k"]&
				x
			\end{tikzcd}
\]
commutes. Notice that given a square 
\[
	\begin{tikzpicture}[node distance=2cm,auto]
		\node (A) {$s(i)$};
		\node (B)[right of=A] {$E(f)$};
		\node(C)[below of=A] {$t(i)$};
		\node (D)[right of=C] {$d$};
		\draw[->] (A) to node {$p$}(B);
		\draw[->,swap] (A) to node{$i$}(C);
		\draw[->] (B) to node{$R(f)$}(D);
		\draw[->,swap] (C) to node {$q$}(D);
	\end{tikzpicture}
	\]
there is an induced commutative diagram.
\[
	\begin{tikzpicture}[node distance=2cm,auto]
		\node (A) {$s(i)$};
		\node (B)[right of=A] {$E(f)$};
		\node(C)[below of=A] {$t(i)$};
		\node (D)[right of=C] {$d$};
		\node(E)[right of=B] {$E(g)$};
		\node(F)[right of=D]{$x$};
		\draw[->] (A) to node {$p$}(B);
		\draw[->,swap] (A) to node{$i$}(C);
		\draw[->] (B) to node{$R(f)$}(D);
		\draw[->,swap] (C) to node {$q$}(D);
		\draw[->](B) to node {$E(h,k)$}(E);
		\draw[->,swap](D) to node {$k$}(F);
		\draw[->](E) to node {$R(g)$}(F);
	\end{tikzpicture}
	\]
The composite diagram has a choice of lift $\gamma:t(i)\to E(g)$ by the ASOA. Notice that if $\kappa:t(i)\to E(g)$ is another lift, then 
\[
L_+(R(g))\circ \kappa=L_+(R(g))\circ \gamma.
\]
Therefore if $\delta,\delta':t(i)\to E(g)$ are lifts for the square on the left, then
\[
E(h,k)\circ\delta,E(h,k)\circ\delta'
\]
are lifts for the composite square, so that 
\[
L_+(R(g))\circ E(h,k)\circ\delta=L_+(R(g))\circ E(h,k)\circ\delta'.
\]
By the universal property given in Lemma \ref{univ_prop_of_red_fact}, there is a unique map 
\[
\ol{E}(h,k):\ol{E}(f)\to\ol{E}(g)
\]
such that the diagram
\[
	\begin{tikzpicture}[node distance=2cm,auto]
		\node (A) {$e$};
		\node (B)[right of=A] {$\ol{E}(f)$};
		\node(C)[below of=A] {$w$};
		\node (D)[right of=C] {$\ol{E}(g)$};
		\node(E)[right of=B] {$d$};
		\node(F)[right of=D]{$x$};
		\draw[->] (A) to node {$\ol{L}(f)$}(B);
		\draw[->,swap] (A) to node{$h$}(C);
		\draw[->] (B) to node{$\ol{E}(h,k)$}(D);
		\draw[->,swap] (C) to node {$\ol{L}(g)$}(D);
		\draw[->](B) to node {$\ol{R}(f)$}(E);
		\draw[->,swap](D) to node {$\ol{R}(g)$}(F);
		\draw[->](E) to node {$k$}(F);
	\end{tikzpicture}
	\]
commutes. This gives us our action on morphisms and uniqueness force functorality.
\end{construction}

\begin{remark}\label{Remark_for_KSOA}
Let $C$ be a category and $I$ be a set of morphisms of $C$ that satisfies the criterion of Construction \ref{Kelly's_soa_funct_fact}. Given a morphism $f:e\to d$ in $C$, we are given
\begin{itemize}
\item a factorization 
\[
e\xrightarrow{\ol{L}f}\ol{E}f\xrightarrow{\ol{R}f}d
\]
of $f$ in $C$
\item a unique lift $\delta_i(p,q):t(i)\to \ol{E}(f)$ for all diagrams 
\[
	\begin{tikzpicture}[node distance=2cm,auto]
		\node (A) {$s(i)$};
		\node (B)[right of=A] {$\ol{E}(f)$};
		\node(C)[below of=A] {$t(i)$};
		\node (D)[right of=C] {$d$};
		\draw[->] (A) to node {$p$}(B);
		\draw[->,swap] (A) to node{$i$}(C);
		\draw[->] (B) to node{$\ol{R}(f)$}(D);
		\draw[->,swap] (C) to node {$q$}(D);
	\end{tikzpicture}
	\]
that commute where
\begin{itemize}
	\item $i\in I$
	\item the maps
	\[
	p:s(i)\to w
	\]
	\[
	q:t(i)\to d
	\]
is a pair of morphisms in $C$.
\end{itemize}
\end{itemize}
\end{remark}

\begin{lemma}\label{Uni_prop_of_Kell's_soa}
Let $C$ be a category and $I$ be a set of maps that satisfy the criterion of Construction \ref{Kelly's_soa_funct_fact}. Let $f:e\to d$ be a morphism of $C$ together with a factorization 
\[
e\xrightarrow{g}w\xrightarrow{h}d
\]
of $f$ in $C$ that satisfies the criterion of Remark \ref{Remark_for_KSOA}. Then there is a unique map $g':\ol{E}(f)\to w$ such that
\begin{itemize}
\item the diagram
\[
\begin{tikzpicture}[node distance=2cm]
\node (A){$c$};
\node(B)[below of=A]{$\ol{E}f$};
\node (C)[below of=B]{$d$};
\node (D)[right of=B]{$w$};
\draw[->](A) to node[left=3]{$\ol{L}f$}(B);
\draw[->](B) to node[left=3]{$\ol{R}f$}(C);
\draw[->](A) to node[above=3]{$g$}(D);
\draw[->](B) to node[above=3]{$g'$}(D);
\draw[->](C) to node[below=3]{$h$}(D);
\end{tikzpicture}
\]
commutes
\item the diagram
\[
\begin{tikzpicture}[node distance=2cm]
\node(B){$t(i)$};
\node (C)[right of=B]{$\ol{E}f$};
\node (D)[below of=C]{$w$};
\draw[->](B) to node[above=3]{$\delta_i(p,q)$}(C);
\draw[->](B) to node[left=3]{$\phi_i(p,q)$}(D);
\draw[->](C) to node[right=3]{$g'$}(D);
\end{tikzpicture}
\]
commutes for $i\in I$ and pairs $(p,q)$.
\end{itemize}
\end{lemma}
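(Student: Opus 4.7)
The plan is to build $g'$ in two stages reflecting the two-step decomposition
\[
e \xrightarrow{L(f)} E(f) \xrightarrow{L_+(R(f))} \ol{E}(f) \xrightarrow{\ol{R}(f)} d
\]
underlying $\ol{E}(f) = E_+(R(f))$: first I apply Lemma \ref{univ_prop_of_AWFS_fact} to produce an intermediate comparison $k : E(f) \to w$ against the AWFS factorization, and then I apply Lemma \ref{univ_prop_of_red_fact} to descend $k$ through the colimit map $L_+(R(f))$ to the desired $g'$.

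Because the criterion of Remark \ref{Remark_for_KSOA} strengthens the criterion of Remark \ref{fac_condtns} (it additionally asserts that the chosen lifts $\phi_i(p,q)$ against $h$ are unique), the factorization $e \xrightarrow{g} w \xrightarrow{h} d$ satisfies the hypotheses of Lemma \ref{univ_prop_of_AWFS_fact}. That lemma produces a unique $k : E(f) \to w$ with $k \circ L(f) = g$, $h \circ k = R(f)$, and $k \circ \delta_i(p,q) = \phi_i(k \circ p, q)$ for every AWFS lifting datum $(i, p, q)$.

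Next, I apply Lemma \ref{univ_prop_of_red_fact} to $R(f) : E(f) \to d$ with $\gamma := k$. Its hypothesis asks that any two lifts $\delta, \delta' : t(i) \to E(f)$ of a common square $(p,q)$ against $R(f)$ are identified after postcomposition with $k$, and this is precisely where the uniqueness built into Remark \ref{Remark_for_KSOA} is essential: both $k \circ \delta$ and $k \circ \delta'$ are lifts of the square $(k \circ p, q)$ against $h$, so they must coincide. The lemma then yields a unique $g' : \ol{E}(f) = E_+(R(f)) \to w$ with $g' \circ L_+(R(f)) = k$.

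It remains to check the three conditions in the statement. The equation $g' \circ \ol{L}(f) = g$ is immediate since $\ol{L}(f) = L_+(R(f)) \circ L(f)$, and $h \circ g' = \ol{R}(f)$ follows from the uniqueness clause of Lemma \ref{univ_prop_of_red_fact} applied with $\gamma = R(f)$, since both $h \circ g'$ and $\ol{R}(f) = R_+(R(f))$ precompose with $L_+(R(f))$ to give $R(f)$. The compatibility $g' \circ \delta_i(p,q) = \phi_i(p,q)$ holds because $g' \circ \delta_i(p,q)$ is itself a lift of a square against $h$, and the uniqueness of such lifts identifies it with $\phi_i(p,q)$. Uniqueness of $g'$ as a whole is recovered by running the two steps backward: any candidate forces $g' \circ L_+(R(f))$ to satisfy the conditions pinning down $k$ in Lemma \ref{univ_prop_of_AWFS_fact}, after which Lemma \ref{univ_prop_of_red_fact} fixes $g'$ itself. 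The main obstacle is not a deep step but the careful bookkeeping required to keep the three suggestively-named lifts --- the AWFS lift, the Kelly lift $\delta_i$, and the external lift $\phi_i$ --- straight, as they live over the three different bottom morphisms $R(f)$, $\ol{R}(f)$, and $h$ respectively, and lifting data must be transported consistently through the two nested universal properties.
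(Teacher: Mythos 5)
Your proposal is correct and follows essentially the same route as the paper: first invoke Lemma \ref{univ_prop_of_AWFS_fact} to obtain the intermediate comparison $E(f)\to w$, then descend through the colimit of Construction \ref{RFA_construct_on_ob} via Lemma \ref{univ_prop_of_red_fact} to produce $g'$. Your write-up is in fact more explicit than the paper's about verifying the coequalizing hypothesis of Lemma \ref{univ_prop_of_red_fact} and about checking the final compatibility conditions, but the underlying argument is identical.
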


\begin{proof}
By Lemma \ref{univ_prop_of_AWFS_fact}, there is a unique map $\hat{g}:Ef\to w$ that satisfies the criteria of Lemma \ref{univ_prop_of_AWFS_fact}.
Now applying the universal property of the colimit of Construction \ref{RFA_construct_on_ob}, there is a unique map $g':\ol{E}(f)\to w$ such that
\begin{itemize}
\item the diagram
\[
\begin{tikzpicture}[node distance=2cm]
\node (A){$c$};
\node(B)[below of=A]{$\ol{E}f$};
\node (C)[below of=B]{$d$};
\node (D)[right of=B]{$w$};
\draw[->](A) to node[left=3]{$\ol{L}f$}(B);
\draw[->](B) to node[left=3]{$\ol{R}f$}(C);
\draw[->](A) to node[above=3]{$g$}(D);
\draw[->](B) to node[above=3]{$g'$}(D);
\draw[->](C) to node[below=3]{$h$}(D);
\end{tikzpicture}
\]
commutes
\item the diagram
\[
\begin{tikzpicture}[node distance=2cm]
\node(B){$t(i)$};
\node (C)[right of=B]{$\ol{E}f$};
\node (D)[below of=C]{$w$};
\draw[->](B) to node[above=3]{$\delta_i(p,q)$}(C);
\draw[->](B) to node[left=3]{$\phi_i(p,q)$}(D);
\draw[->](C) to node[right=3]{$g'$}(D);
\end{tikzpicture}
\]
commutes for $i\in I$ and pairs $(p,q)$.
\end{itemize}
\end{proof}

\begin{theorem}(Kelly's Small Object Argument)\label{Kelly_SOA}
	Let $C$ be a locally finite presentable category and $I$ be a set of morphisms whose domains and targets are sequentially small. Then Construction \ref{Kelly's_soa_funct_fact} extends to a strong factorization system.
\end{theorem}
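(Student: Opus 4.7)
The plan is to combine Theorem \ref{Kelly_OFS} with Lemma \ref{choice_OFS}: the former produces an orthogonal factorization system on $C$, and the latter upgrades any OFS together with a choice of factorization to a strong factorization system. My task is then to verify that the functorial factorization of Construction \ref{Kelly's_soa_funct_fact} is exactly such a choice, and that the resulting AWFS structure is the one that extends the construction.

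First I would check the hypotheses of Theorem \ref{Kelly_OFS}. Since $C$ is locally finite presentable and the domains and targets of maps in $I$ are sequentially small, these objects are $\aleph_0$-compact, as the preservation of $\omega$-chain colimits coincides with the preservation of $\aleph_0$-filtered colimits in the locally finitely presentable setting. Thus $(I^{\uparrow\downarrow}, I^{\downarrow})$ is an OFS on $C$. Next I would verify that for every $f\colon e\to d$ the factorization $e\xrightarrow{\ol{L}f}\ol{E}f\xrightarrow{\ol{R}f}d$ sits in this OFS. The right factor $\ol{R}f = R_+(Rf)$ lies in $I^{\downarrow}$: existence of lifts against any $i\in I$ is inherited from the lifts that the ASOA already produces into $E(Rf)$, while uniqueness is the defining content of the reduced factorization, since the colimit in Construction \ref{RFA_construct_on_ob} identifies any pair of elements in $\mathpzc{lift}^{Rf}(i,p,q)$ upon postcomposition with $L_+(Rf)$. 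The left factor $\ol{L}f$ lies in $I^{\uparrow\downarrow}$ because it is assembled from pushouts, coproducts, transfinite composites, and a colimit identification of morphisms whose constituents already lie in $I^{\uparrow\downarrow}$, and each of these operations preserves the left class of a weak factorization system.

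Having exhibited the OFS and the choice of factorization, Lemma \ref{choice_OFS} delivers a strong factorization system whose underlying functorial factorization is that of Construction \ref{Kelly's_soa_funct_fact}. The main obstacle is verifying that the comultiplication $\sigma_f\colon\ol{E}f\to\ol{E}\,\ol{L}f$, the multiplication $\pi_f\colon\ol{E}\,\ol{R}f\to\ol{E}f$, and the distributive law $\delta$ produced by this upgrade agree with the functorial action $\ol{E}(h,k)$ already built in Construction \ref{Kelly's_soa_funct_fact}, and that every coherence diagram in Definition \ref{AWFS} commutes. This is where the uniqueness statement of Lemma \ref{Uni_prop_of_Kell's_soa} does the heavy lifting: each axiom can be reduced to the assertion that two maps out of some $\ol{E}f$ satisfy the same universal cone conditions, so uniqueness forces them to coincide. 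Once lifts are unique, the distributive law is canonically determined and no further choices need to be made, so the AWFS is a strong factorization system extending Construction \ref{Kelly's_soa_funct_fact}.
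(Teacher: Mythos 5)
Your proposal follows the same route as the paper's proof: invoke Theorem \ref{Kelly_OFS} to obtain the orthogonal factorization system $(I^{\uparrow\downarrow},I^{\downarrow})$, check that $\ol{L}f$ and $\ol{R}f$ land in the two classes (the latter via the uniqueness of lifts recorded in Remark \ref{Remark_for_KSOA}), and then pass through Lemma \ref{choice_OFS} to upgrade the chosen functorial factorization to a strong factorization system. You simply spell out in more detail the steps the paper leaves implicit, so the argument is correct and essentially identical in strategy.
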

\begin{proof}
It follows from Theorem \ref{Kelly_OFS} that 
$(I^{\uparrow\downarrow},I^{\downarrow})$ is an OFS. We have that 
\[
\ol{L}(f)\in  I^{\uparrow\downarrow}
\]
 for all $f\in \mor(C)$. We only need to check that $\ol{R}(f)\in  I^{\uparrow}$  for all $f\in \mor(C)$. This just follows from Remark \ref{Remark_for_KSOA}. We are now finished with our proof. Therefore we have a strong factorization system.
\end{proof}

\section{Distributive Series}
We now provide background and extend the theory for distributive series of monads. The constructions later in this paper fall into distributive series (see Theorem \ref{distr_ser_AWFS} and Theorem \ref{distr_ser_OFS}). Moreover, we give an appendix dedicated to distributive series of monads whose monads are epi-monads (see Definition \ref{epi_monads}).
\begin{definition}(Cheng, Definition 2.3 of \cite{EC1})
	Fix $n\geq 3$. A \emph{distributive series of $n$ monads} consists of monads $T_0,T_1,\cdots,T_n$ together with a distributive law $\lambda^{i,j}:T_iT_j\Rightarrow T_jT_i$
	for $n>j>i\geq 0$ such that the diagram 
	\[
\begin{tikzpicture}[node distance=1.5cm]
	\node (A){$T_iT_jT_k$};
	\node (B)[right of=A]{};
	\node (C)[above of =B]{$T_jT_iT_k$};
	\node (D)[below of =B]{$T_iT_kT_j$};
	\node (E)[right of=C,node distance=3cm]{$T_jT_kT_i$};
	\node (F)[right of=D,node distance=3cm]{$T_kT_iT_j$};
	\node (G)[right of=B]{};
	\node (H)[right of =G,node distance=3cm]{$T_kT_jT_i$};
	\draw[->](A) to node[left=3]{$\lambda^{i,j}T_k$}(C);
	\draw[->,swap](A) to node[left=3]{$T_i\lambda^{j,k}$}(D);
	\draw[->](C)to node[above=3]{$T_j\lambda^{i,k}$}(E);
	\draw[->,swap](D) to node[below=3]{$\lambda^{i,k}T_j$}(F);
	\draw[->](E) to node[right=3]{$\lambda^{j,k}T_i$}(H);
	\draw[->,swap](F) to node[right=3]{$T_k\lambda^{i,j}$}(H);
\end{tikzpicture}
\]
commutes for $n\geq k>j>i\geq 0$. 
\end{definition}

\begin{theorem}(Cheng, Theorem 2.1 of \cite{EC1})
	Fix $n\geq 3$. Let $((T_i)_{i=0}^n,(\lambda_{i,j})_{n\geq i>j})$ be a distributive series of $n$ monads. Then for all $1\leq i < n$  we have induced monads 
	\[T_i T_{i-1} \cdots T_0 \quad \mbox{and} \quad T_{n} T_{n-1} \cdots T_{i+1}\]
	
	\noindent together with a distributive law of \ $T_i T_{i-1} \cdots T_0$\  over \ $T_{n} T_{n-1} \cdots T_{i+1}$\ i.e. 
	\[(T_i T_{i-1} \cdots T_0) (T_{n} T_{n-1} \cdots T_{i+1}) \Rightarrow (T_{n} T_{n-1} \cdots T_{i+1})(T_i T_{i-1} \cdots T_0)\]
	
	\noindent given by the composites of the $\lambda^{ij}$.  Moreover, all the monad structures on \ $T_n T_{n-1} \cdots T_0$ induced by combining the structure maps of the monads and distributive laws are the same.
\end{theorem}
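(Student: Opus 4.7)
The plan is to proceed by induction on $n$, using Beck's Proposition \ref{comp_monad} as the workhorse for building composite monads from distributive laws. For the base case $n=3$, Proposition \ref{comp_monad} applied to $\lambda^{0,1}$ gives a monad structure on $T_1T_0$, and similarly $\lambda^{1,2}$ yields a monad structure on $T_2T_1$. To combine all three I construct a distributive law $\Lambda:(T_1T_0)T_2\Rightarrow T_2(T_1T_0)$ as the composite $T_1\lambda^{0,2}\circ\lambda^{1,2}T_0$. Checking the three axioms for $\Lambda$ reduces to pasting the axioms for each $\lambda^{i,j}$ together with naturality; the multiplication axiom in particular needs the explicit formula for $\mu^{T_1T_0}$ recorded in Proposition \ref{comp_monad}, which unfolds into subdiagrams each of which is handled by one of the axioms for a single $\lambda^{i,j}$. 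Applying Proposition \ref{comp_monad} once more to $\Lambda$ gives a monad structure on $T_2T_1T_0$.

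The inductive step has the same shape: assuming the result holds for strictly fewer than $n$ monads, I apply the inductive hypothesis to the subseries $T_0,\dots,T_i$ and $T_{i+1},\dots,T_n$ to obtain composite monads, then assemble the distributive law between them from the $\lambda^{j,k}$ with $j\le i<k$. Verifying the distributive law axioms for this composite is again a matter of pasting; crucially, the way the composite $\Lambda$ decomposes into individual $\lambda^{j,k}$'s is compatible with the recursive formulas for the composite multiplications and units, so the verification splits into already-verified pieces.

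The principal obstacle is the final claim that \emph{every} monad structure on $T_nT_{n-1}\cdots T_0$ arising from such an iterated construction coincides. The idea is that the choices made in the induction correspond to a bracketing of the word $T_n\cdots T_0$, and any two bracketings are related by a sequence of re-bracketings that locally look like interchanging the order in which three adjacent monads are combined, i.e.\ the base case comparison between $(T_{k+1}T_k)T_{k-1}$ and $T_{k+1}(T_kT_{k-1})$. This local comparison is controlled precisely by the hexagon axiom in the definition of a distributive series, and the global comparison follows by coherence: any two bracketings are linked by a finite sequence of these local moves, and the induced monad structures are equal at each step.

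Finally, the explicit description of the combined distributive law as a composite of the $\lambda^{j,k}$'s is obtained by tracking how the inductive construction of $\Lambda$ assembles the individual distributive laws at each stage; once the coherence above is in place this is a bookkeeping exercise that does not require any further axiom beyond those already used. The hardest technical step is the coherence argument in the previous paragraph — it is where the hexagon condition is used in an essential way, and where one must be careful to distinguish equality of underlying endofunctors (which is obvious) from equality of monad structures (which is not).
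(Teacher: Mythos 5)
First, note that the paper does not actually prove this statement: it is quoted verbatim from Cheng and used as a black box (the paper's own Corollary~\ref{thmChengReIm_corr} just defers to ``the proof Cheng provided''). So your proposal is being measured against the standard argument rather than against anything in this document. Your overall architecture --- induct on the number of monads, use Proposition~\ref{comp_monad} to build the composite monads, assemble the crossing distributive law as a composite of whiskered $\lambda^{j,k}$'s, and handle the ``all structures agree'' clause by reducing to re-bracketings of three adjacent factors --- is the right one and is essentially Cheng's.

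There is, however, one genuine misstep, and it concerns exactly the hypothesis that makes the theorem true. You assert that checking the distributive-law axioms for $\Lambda$ ``reduces to pasting the axioms for each $\lambda^{i,j}$ together with naturality,'' with each subdiagram ``handled by one of the axioms for a single $\lambda^{i,j}$,'' and you reserve the Yang--Baxter hexagon for the final uniqueness claim. That is not where the hexagon first enters. Already in your base case, the compatibility of $\Lambda=\lambda^{1,2}T_0\circ T_1\lambda^{0,2}$ with the multiplication of the composite monad $T_1T_0$ requires commuting the \emph{internal} law $\lambda^{0,1}$ (which sits inside $\mu^{T_1T_0}$ via the formula of Proposition~\ref{comp_monad}) past the two \emph{crossing} laws $\lambda^{0,2}$ and $\lambda^{1,2}$: concretely one must identify $\lambda^{1,2}T_0\circ T_1\lambda^{0,2}\circ\lambda^{0,1}T_2$ with $T_2\lambda^{0,1}\circ\lambda^{0,2}T_1\circ T_0\lambda^{1,2}$, and that identity \emph{is} the hexagon --- an axiom involving three distinct laws, not a unit or multiplication axiom for any single one, and not derivable from naturality. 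Without it the composite of the $\lambda$'s need not be a distributive law at all; this is the entire reason the Yang--Baxter condition appears in the definition of a distributive series. The same issue recurs at every inductive step when you verify compatibility with the recursively defined multiplications. The fix is easy --- you have the hexagon available and already use it later --- but as written the outline claims a reduction that is false, and a reader executing it would get stuck at precisely the diagram the hypothesis exists to close. (Two small further points: the displayed composite $T_1\lambda^{0,2}\circ\lambda^{1,2}T_0$ does not typecheck with source $T_1T_0T_2$; you want $\lambda^{1,2}T_0\circ T_1\lambda^{0,2}$. And the minimal case in the paper's indexing is four monads $T_0,\dots,T_3$, though the three-monad configuration you analyze is indeed the atomic building block.)
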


We now extend the definition of Cheng to where we have an $\bN$-indexed collection of monads. 

\begin{definition}
	A \emph{distributive series of monads} consists of monads $T_0,T_1,\cdots,T_n,\cdots$ together with a distributive law $\lambda^{i,j}:T_iT_j\Rightarrow T_jT_i$
	for $j>i\geq 0$ such that the diagram 
	\[
\begin{tikzpicture}[node distance=1.5cm]
	\node (A){$T_iT_jT_k$};
	\node (B)[right of=A]{};
	\node (C)[above of =B]{$T_jT_iT_k$};
	\node (D)[below of =B]{$T_iT_kT_j$};
	\node (E)[right of=C,node distance=3cm]{$T_jT_kT_i$};
	\node (F)[right of=D,node distance=3cm]{$T_kT_iT_j$};
	\node (G)[right of=B]{};
	\node (H)[right of =G,node distance=3cm]{$T_kT_jT_i$};
	\draw[->](A) to node[left=3]{$\lambda^{i,j}T_k$}(C);
	\draw[->,swap](A) to node[left=3]{$T_i\lambda^{j,k}$}(D);
	\draw[->](C)to node[above=3]{$T_j\lambda^{i,k}$}(E);
	\draw[->,swap](D) to node[below=3]{$\lambda^{i,k}T_j$}(F);
	\draw[->](E) to node[right=3]{$\lambda^{j,k}T_i$}(H);
	\draw[->,swap](F) to node[right=3]{$T_k\lambda^{i,j}$}(H);
\end{tikzpicture}
\]
commutes for $k>j>i\geq 0$. 
\end{definition}

 \begin{notation}
 We write 
	\[
	T:=((T_i)_{i=0}^\infty,(\lambda_{i,j})_{i>j})
	\]
	to denote a distributive series of monads. 
\end{notation}

\begin{corollary}\label{thmChengReIm_corr}
	Let $((T_i)_{i=0}^\infty,(\lambda_{i,j})_{i>j})$ be a distributive series of monads. Then for all $n\geq 2$ and $1\leq i < n$  we have induced monads 
	\[T_i T_{i-1} \cdots T_0 \quad \mbox{and} \quad T_{n} T_{n-1} \cdots T_{i+1}\]
	
	\noindent together with a distributive law of \ $T_i T_{i-1} \cdots T_0$\  over \ $T_{n} T_{n-1} \cdots T_{i+1}$\ i.e. 
	\[(T_i T_{i-1} \cdots T_0) (T_{n} T_{n-1} \cdots T_{i+1}) \Rightarrow (T_{n} T_{n-1} \cdots T_{i+1})(T_i T_{i-1} \cdots T_0)\]
	
	\noindent given by the composites of the $\lambda^{ij}$.  Moreover, all the monad structures on \ $T_n T_{n-1} \cdots T_0$ induced by combining the structure maps of the monads and distributive laws are the same for $n\geq 2$.
\end{corollary}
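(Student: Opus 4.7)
The plan is to deduce this corollary directly from Cheng's Theorem~2.1 by truncation. For each fixed $n \geq 3$, I would observe that the finite subcollection $(T_0, T_1, \ldots, T_n)$ together with the distributive laws $\lambda^{i,j}$ for $n \geq j > i \geq 0$ assembles into a distributive series of $n$ monads in Cheng's original sense. The only condition to check is that the compatibility hexagon holds for each triple $k > j > i \geq 0$ with $k \leq n$, but this is immediate: such a hexagon is precisely one of the hexagons assumed to commute in the ambient $\bN$-indexed series, so it commutes by hypothesis. The monad structures on each $T_i$ and the distributive laws $\lambda^{i,j}$ are inherited without modification.

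Having identified the truncated data as satisfying Cheng's hypotheses, I would simply invoke Theorem~2.1 of \cite{EC1}. For each $1 \leq i < n$ this yields the induced monads $T_iT_{i-1}\cdots T_0$ and $T_nT_{n-1}\cdots T_{i+1}$, the distributive law between them assembled from the $\lambda^{p,q}$, and the fact that every bracketing-and-ordering choice of structure maps gives rise to the same monad structure on the composite $T_nT_{n-1}\cdots T_0$. Because the data at each fixed $n$ is literally a restriction of the ambient series, no compatibility needs to be verified across different values of $n$; each assertion in the corollary is an assertion about finitely many of the $T_i$ and $\lambda^{i,j}$ at a time.

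The only wrinkle is the boundary case $n = 2$, which falls outside the hypothesis $n \geq 3$ of Cheng's theorem. Here $i = 1$ is forced, and the required distributive law $(T_1 T_0)T_2 \Rightarrow T_2(T_1 T_0)$ can be built directly from $\lambda^{0,2}$ and $\lambda^{1,2}$; the resulting monad structure on $T_2 T_1 T_0$ then follows by two iterated applications of Beck's Proposition~\ref{comp_monad}. Uniqueness of the induced monad structure is automatic in this case, since there is a single relevant triple $(0,1,2)$ and its hexagon is precisely the one assumed to commute.

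The main ``obstacle'' is really just a bookkeeping step: articulating cleanly that truncating an $\bN$-indexed distributive series produces an honest finite distributive series of monads in Cheng's sense, and recording the separate verification at $n = 2$. No new coherence analysis is required beyond what Cheng has already carried out.
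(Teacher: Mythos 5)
Your proposal is correct, and in substance it leans on exactly the same source as the paper: the paper's entire proof is the single sentence that the argument is identical to Cheng's proof of Theorem~2.1 of \cite{EC1}. The difference is one of packaging. You treat Cheng's theorem as a black box and reduce to it by truncation---observing that for each fixed $n$ the data $(T_0,\dots,T_n)$ with the laws $\lambda^{i,j}$ for $n\geq j>i\geq 0$ is literally a distributive series of $n$ monads in Cheng's original sense, since the required hexagons are a subset of those assumed---whereas the paper proposes to re-run Cheng's proof verbatim in the $\bN$-indexed setting. Your route is the more honest reading of the word ``corollary,'' and it buys one genuine improvement in rigor: you notice that the case $n=2$ falls outside the hypothesis $n\geq 3$ of Cheng's definition and theorem as quoted, and you supply the (easy) separate argument via the composite of $\lambda^{0,2}$ and $\lambda^{1,2}$ together with two applications of Beck's Proposition~\ref{comp_monad}, with uniqueness coming from the single hexagon for $(0,1,2)$. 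The paper's one-line proof silently skips this boundary case. The only thing I would tighten is the phrase ``can be built directly'': you should say explicitly that verifying the composite $ \lambda^{1,2}T_0\circ T_1\lambda^{0,2}$ is a distributive law of the monad $T_1T_0$ over $T_2$ uses both the unit/multiplication axioms of $\lambda^{0,1}$, $\lambda^{0,2}$, $\lambda^{1,2}$ and the Yang--Baxter hexagon, but this is the standard computation and poses no obstacle.
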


\begin{proof}
The proof of this is identical to the one Cheng provided for Theorem 2.1 of \cite{EC1}.
\end{proof}

\begin{definition}
A map $f:T\to S$ of distributive series consists of a monad morphism 
\[
f_i:T_i\to S_i
\]
for all $i\geq 0$ such that the square 
\[
\begin{tikzpicture}[node distance=2cm,auto]
	\node (A) {$T_iT_j$};
	\node (B)[right of=A] {$T_jT_i$};
	\node(C)[below of=A] {$S_iS_j$};
	\node (D)[right of=C] {$S_jS_i$};
	\draw[->] (A) to node {$\lambda_{i{,}j}$}(B);
	\draw[->,swap] (A) to node{$f_if_j$}(C);
	\draw[->] (B) to node{$f_jf_i$}(D);
	\draw[->,swap] (C) to node {$\lambda'_{i,j}$}(D);
\end{tikzpicture}
\]
commutes for $i>j\geq 0$.
\end{definition}

\begin{notation}
We use $\DistSys$ to denote the category of distributive systems and maps between them.
\end{notation}

\begin{definition}
	Let $n\geq 0$, $C$ be a category and 
	\[
	T:=((T_i)_{i=0}^\infty,(\lambda_{i,j})_{i>j})
	\]
	be a distributive series of monads. The \emph{$n$th associated monad} is the monad structure on $T_nT_{n-1}\cdots T_0$ induced in Theorem \ref{thmChengReIm_corr} and we denote it by $(\hat{T}_n,\hat{\eta}_n,\hat{\mu}_n)$.
\end{definition}

\begin{definition}\label{ind_funct_on_dist}
	Let $C$ be a category and 
	\[
	T:=((T_i)_{i=0}^\infty,(\lambda_{i,j})_{i>j})
	\]
	be a distributive series of monads. The \emph{associated pointed endofunctor of the distributive series}  $\hat{T}$, if it exists, is defined to be
	to be the colimit of the diagram
	\[
	1_{C}\xrightarrow{\eta^0} T_0\xrightarrow{\eta^1_{T_0}} T_1T_0\xrightarrow{\eta^2_{T_1T_0}}\cdots
	\] \label{diag_for_ass_pntd_endo}
	in the category of endofunctors on $C$ with pointing given to be the map
	\[
	\hat{\eta}:1_C\Rightarrow \hat{T}
	\]
	induced by taking colimits. 
\end{definition}
We can not always upgrade the pointed endofunctor $(\hat{T},\hat{\eta})$, if it exists, to a monad and we give a special name to the distributive series when it can.  
	\begin{definition}\label{n-tail}
	Let $n\geq 0$, $C$ be a category and 
	\[
	T:=((T_i)_{i=0}^\infty,(\lambda_{i,j})_{i>j})
	\]
	be a distributive series of monads. The \emph{$n$-tail} of the associated pointed endofunctor of the distributive series, if it exists, is the unique structure map 
	\[
	\kappa_n:\hat{T}_n\to\hat{T}
	\]
	associated to the colimit used to form the associated pointed endofunctor of the distributive series.
	\end{definition}
Here the $n$-tail is the colimit to the diagram in Definition \ref{diag_for_ass_pntd_endo} up to and including $\hat{T}_n=T_nT_{n-1}\cdots T_0$ for $n\geq 0$.	
	\begin{definition}\label{compl_dist_series}
	We say that a distributive series of monads is \emph{completable} if its associated pointed endofunctor $(\hat{T},\hat{\eta})$, if it exists, extends to a monad $(\hat{T},\hat{\eta},\hat{\mu})$ such that the diagram 
	\[
\begin{tikzpicture}[node distance=2cm,auto]
	\node (A) {$\hat{T}_n^2$};
	\node (B)[right of=A] {$\hat{T}^2$};
	\node(C)[below of=A] {$\hat{T}_n$};
	\node (D)[right of=C] {$\hat{T}$};
	\draw[->] (A) to node {$\kappa_n\kappa_n$}(B);
	\draw[->,swap] (A) to node{$\hat{\mu}^n$}(C);
	\draw[->] (B) to node{$\hat{\mu}$}(D);
	\draw[->,swap] (C) to node {$\kappa_n$}(D);
\end{tikzpicture}
\]
commutes for $n\geq 0$.
\end{definition}

\section{A Special Choice of Coherator}
In this section, we begin by constructing a cofibrantly generated algebraic weak factorization system on $\Th_{\Theta_0^\op}$. We take the fibrant replacement of the identity theory to obtain a theory which served as the initial choice of coherator we were going to work with. Then, we partition the generating maps for the cofibrantly generated algebraic weak factorization system and obtain an $\aleph_0$-indexed collection algebraic weak factorization systems. We prove that the induced fibrant replacement monads can be upgraded to a distributive series of monads. We finish by showing that the distributive series is completable in the sense of  \ref{compl_dist_series}.
\begin{definition}\label{spheres_in_inf_gpds}
	For all $\vec{p}\in\ob(\Theta_0^\op)$ and $k\geq 0$ with $\hgt(\vec{p})\leq k+1$ , define 
	\[
	S_{\vec{p},k}
	\]
	to be the theory obtained by freely adding two maps 
\[
		\begin{tikzpicture}[node distance=2cm]
			\node (A) {$\vec{p}$};
			\node (B)[right of=A]{$k$};
			\draw[transform canvas={yshift=0.3ex},->] (A) to node[above=3] {$f$} (B);
			\draw[transform canvas={yshift=-0.3ex},->,swap](A) to node[below=3] {$g$} (B);
		\end{tikzpicture}
		\]
to $\Theta_0^\op$ such that $s\circ f=s\circ g$ and $t\circ f=t\circ g$ when $k\geq 1$. This comes equipped with a theory structure map.
	\[
	\Theta_0^\op\hookrightarrow S_{\vec{p},k}
	\]
\end{definition}

\begin{definition}\label{disks_in_inf_gpds}
Let $D_{\vec{p},k}$ be obtained by freely adding a map $\delta_{f,g}:\vec{p}\to k+1$ to $S_{\vec{p},k}$  such that the triangles 

\[
		\begin{tikzpicture}[node distance=2cm]
			\node (A) {$\vec{p}$};
			\node (B)[right of=A]{$k$};
			\node (C) [above of=B]{$k+1$};
			\draw[transform canvas={yshift=0.3ex},->] (A) to node[above=3] {$f$} (B);
			\draw[transform canvas={yshift=-0.3ex},->,swap](A) to node[below=3] {$g$} (B);
			\draw[transform canvas={xshift=-0.3ex},->] (C) to node[left=3] {$s$} (B);
			\draw[transform canvas={xshift=0.3ex},->,swap](C) to node[right=3] {$t$} (B);
			\draw[->](A) to node[above=4] {$\delta_{f,g}$}(C);
		\end{tikzpicture}
		\]
commute. This comes equipped with the following theory structure map.
\[
\Theta_0^\op\hookrightarrow D_{\vec{p},k}
\]
\end{definition}
 
\begin{notation}\label{incl_of_sphere_into_disks}
There is naturally an inclusion map of theories.
\[
j_{\vec{p},k}:S_{\vec{p},k}\to D_{\vec{p},k}
\] 
for all tables of dimensions $\vec{p}$ and $n\geq 0$. We let 
	\[
	I:=\{S_{\vec{p},k}\xrightarrow{j_{\vec{p},k}} D_{\vec{p},k}:\vec{p}\in\ob(C),k\geq 0\}.
	\]
\end{notation}
\begin{lemma} \label{admiss_1}
	The objects constructed in Definition \ref{spheres_in_inf_gpds} and Definition \ref{disks_in_inf_gpds}are presentable objects of $\Th_{\Theta_0^\op}$.  Moreover, the set $I$ of Notation \ref{incl_of_sphere_into_disks} is admissible for the ASOA of \cite{Garn2}.
\end{lemma}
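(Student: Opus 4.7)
The plan is to prove both assertions by first establishing finite presentability of the spheres $S_{\vec{p},k}$ and disks $D_{\vec{p},k}$; admissibility of $I$ will then follow immediately. Concretely, I would give an explicit finite presentation of each sphere and disk, argue that this forces the representables to preserve filtered colimits in $\Th_{\Theta_0^\op}$, and then invoke Garner's admissibility criterion.

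Each $S_{\vec{p},k}$ is $\Theta_0^\op$ with two new generating morphisms $f, g : \vec{p} \rightrightarrows k$ adjoined, subject to the finitely many equations $s \circ f = s \circ g$ and $t \circ f = t \circ g$ (present only when $k \geq 1$). Similarly, $D_{\vec{p},k}$ is $S_{\vec{p},k}$ with an additional generator $\delta_{f,g} : \vec{p} \to k+1$ subject to $s \circ \delta_{f,g} = f$ and $t \circ \delta_{f,g} = g$. By the universal property of these presentations, a morphism in $\Th_{\Theta_0^\op}$ from $S_{\vec{p},k}$ (respectively $D_{\vec{p},k}$) into a theory $J$ is precisely a choice of the adjoined data in $J$ satisfying the stated equations. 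Because filtered colimits in $\Th_{\Theta_0^\op}$ are computed at the level of hom-sets of the underlying category, and because choosing finitely many morphisms subject to finitely many equations commutes with filtered colimits of sets, the representable functors $\Th_{\Theta_0^\op}(S_{\vec{p},k},-)$ and $\Th_{\Theta_0^\op}(D_{\vec{p},k},-)$ both preserve filtered colimits. Hence each is finitely presentable, and in particular presentable.

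For admissibility of $I$ in the ASOA, Garner's criterion asks that $\Th_{\Theta_0^\op}$ be cocomplete and that the domains of the generating maps be $\kappa$-presentable for some regular cardinal $\kappa$. Cocompleteness is supplied by local presentability of $\Th_{\Theta_0^\op}$, and the first part of the argument furnishes $\aleph_0$-presentability of each $S_{\vec{p},k}$. Admissibility of $I$ follows.

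The main technical obstacle will be justifying the claim that filtered colimits in $\Th_{\Theta_0^\op}$ are computed hom-wise, since $\Th_{\Theta_0^\op}$ is carved out of $\Ex_{\Theta_0^\op}$ by the global requirement of preserving globular products. I would handle this by showing that the inclusion $\Th_{\Theta_0^\op} \hookrightarrow \Ex_{\Theta_0^\op}$ creates filtered colimits, which reduces to the standard fact that preservation of a specified class of finite limits is stable under filtered colimits of functors; the remaining identity-on-objects condition is trivially stable under such colimits.
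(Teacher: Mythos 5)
Your proposal is correct in substance, but it takes a different route from the paper only in the sense that the paper does not argue at all: its entire proof is a citation (``See Subsection 3.11 and Lemma 3.12 of \cite{Malt}''), whereas you reconstruct the argument that the cited reference presumably contains. Your reconstruction is the standard one and it works: $S_{\vec{p},k}$ and $D_{\vec{p},k}$ are built from $\Theta_0^\op$ by freely adjoining finitely many arrows modulo finitely many equations \emph{within} $\Th_{\Theta_0^\op}$, so $\Th_{\Theta_0^\op}(S_{\vec{p},k},J)$ is a finite limit of hom-sets of $J$, and this commutes with filtered colimits once one knows those are computed hom-wise. On the one technical point you flag, your resolution is right but can be said more sharply: since morphisms of globular theories are identity-on-objects functors under $\Theta_0^\op$, the object set is constant across any diagram, filtered colimits are computed on hom-sets, and the condition that the images of the globular sum cocones remain universal involves only finite diagrams and a quantification over a \emph{fixed} set of objects, so it is preserved by filtered colimits of hom-sets (finite limits of sets commute with filtered colimits). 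One minor calibration: for Garner's ASOA on a locally presentable category, admissibility already holds for any small set of maps (Garner's condition $(*)$, which is what the paper's Proposition \ref{universalrealisation} invokes), so finite presentability of the domains is not strictly needed for that half of the lemma; where presentability of both domains \emph{and} codomains genuinely earns its keep is in the sequential-smallness hypotheses of Construction \ref{RFA_construct_on_ob} and Theorem \ref{Kelly_SOA}. What your approach buys is a self-contained proof; what the paper's approach buys is brevity at the cost of deferring the entire content to an external source.
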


\begin{proof}
See Subsection 3.11 and Lemma 3.12 of \cite{Malt}.
\end{proof}

\begin{notation}
	Let $(L_I,E_I,R_I,\delta_I)$ be the AWFS cofibrantly generated by $I$ in $\Th_{\Theta_0^\op}$. 
\end{notation}
\begin{lemma}
	The fibrant replacement of the initial object $\id_{\Theta_0^\op}$, denote it by 
	\[
	J^{FC}:\Theta_0^\op\to FC,
	\]
	is an $(\infty,0)$-coherator.
\end{lemma}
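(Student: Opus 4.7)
The plan is to match the construction of $FC$ with the inductive presentation required by Definition \ref{eq:8}. By Lemma \ref{admiss_1}, the set $I$ is admissible for the ASOA in $\Th_{\Theta_0^\op}$ and its domains and codomains are sequentially small, so Proposition \ref{universalrealisation} produces the AWFS $(L_I, E_I, R_I, \delta_I)$, and $FC$ is the intermediate object in Garner's factorization of the map from the initial theory $\id_{\Theta_0^\op}$ to the terminal theory. Because the generators are sequentially small, this transfinite construction terminates at $\omega$, so $FC$ arises as the colimit of a sequential diagram
\[
C_0 = \id_{\Theta_0^\op} \to C_1 \to C_2 \to \cdots
\]
in $\Th_{\Theta_0^\op}$, where $C_{n+1}$ is obtained from $C_n$ by simultaneously attaching a copy of $D_{\vec{p},k}$ along every map $S_{\vec{p},k} \to C_n$; equivalently, $C_{n+1}$ is the pushout of $C_n$ along the coproduct of generators $j_{\vec{p},k} \in I$ indexed by all attaching squares $j_{\vec{p},k} \to (C_n \to *)$.

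I would then translate the attaching data at each stage into admissible pairs. By the universal properties of Definition \ref{spheres_in_inf_gpds} and Definition \ref{disks_in_inf_gpds}, a theory map $S_{\vec{p},k} \to C_n$ is exactly a parallel pair
\[
f, g : \vec{p} \rightrightarrows k
\]
in $C_n$ satisfying $s \circ f = s \circ g$ and $t \circ f = t \circ g$ when $k \geq 1$, i.e., an admissible pair of $C_n$; and pushing $C_n$ out along the associated $j_{\vec{p},k}$ freely adjoins a morphism $\delta_{f,g}: \vec{p} \to k+1$ with $s \circ \delta_{f,g} = f$ and $t \circ \delta_{f,g} = g$, which is precisely the \emph{formal adjoining of a lift}. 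Letting $U_n$ be the set of all admissible pairs of $C_n$, the step $C_n \to C_{n+1}$ therefore takes exactly the shape demanded by Definition \ref{eq:8}, and $FC$ is its colimit.

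Finally I would verify that $J^{FC}$ is contractible. An admissible pair in $FC$ with source $\vec{p}$ and target $k$ is a theory map $S_{\vec{p},k} \to FC$. Since $S_{\vec{p},k}$ is sequentially small by Lemma \ref{admiss_1}, this map factors through some $C_n$, producing an admissible pair in $U_n$; by construction $C_{n+1}$ then contains a lift $\delta_{f,g}$, whose image under $C_{n+1} \to FC$ lifts the original admissible pair. The one genuinely delicate point is the identification of pushouts along the $j_{\vec{p},k}$, taken inside the category $\Th_{\Theta_0^\op}$ rather than in a freer presheaf category, with the ``formally adjoin a lift'' operation; this follows from the defining universal properties of $S_{\vec{p},k}$ and $D_{\vec{p},k}$ together with the cocompleteness of $\Th_{\Theta_0^\op}$ noted after Notation for $\Th_{\Theta_0^\op}$.
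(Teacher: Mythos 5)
Your argument is correct in outline and is, in substance, the argument the paper delegates to the literature: the paper's own proof is the single line ``rework the proof of Theorem 3.14 of \cite{Malt}'', so you have essentially reconstructed the content of that citation rather than taken a different route. Two points deserve care if you write this out in full. First, $FC$ is produced by Garner's \emph{algebraic} small object argument, not the plain Quillen one, so the stage $C_{n+1}$ is not literally the pushout of $C_n$ along the coproduct of generators indexed by \emph{all} attaching squares; Garner's free-monad construction interleaves a coequalizer step that identifies redundantly adjoined lifts with ones already present. This does not damage the conclusion, because Definition \ref{eq:8} only asks for \emph{some} set $U_n$ of admissible pairs at each stage, and Garner's stages are still of the form ``formally adjoin lifts for a set of admissible pairs'' (e.g.\ the ones without an already-designated lift) -- but you should not assert the naive pushout description verbatim, or you risk describing a different (unreduced) theory than the $FC$ the lemma is about. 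Second, your identification of theory maps $S_{\vec p,k}\to C_n$ with admissible pairs, and of pushouts along $j_{\vec p,k}$ in $\Th_{\Theta_0^\op}$ with formal adjunction of lifts, is exactly the right ``delicate point'' to isolate, and your closing contractibility argument via sequential smallness of $S_{\vec p,k}$ (Lemma \ref{admiss_1}) is the standard and correct one. What your write-up buys over the paper's is self-containedness; what it costs is the need to be precise about which variant of the small object argument is actually being iterated.
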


\begin{proof}
Just rework the proof of Theorem 3.14 of \cite{Malt}.
\end{proof}

\begin{remark}
We use $FC$ to denote that $FC$ was our first idea of a special choice of coherator for weak $\infty$-groupoids. We note that $FC$ is the reduced coherator mentioned in Example 2.12 of \cite{Dim1}.  However, we shall not $FC$ as our choice of theory. 
\end{remark}

	We obtain a diagram
	\[
	\begin{tikzcd}
		\vdots\ar[d,shift left=.75ex,"t^*"]
		\ar[d,shift right=.75ex,swap,"s^*"]&\vdots\ar[d,shift left=.75ex,"t^*"]
		\ar[d,shift right=.75ex,swap,"s^*"]&\vdots\ar[d,shift left=.75ex,"t^*"]
		\ar[d,shift right=.75ex,swap,"s^*"]&\cdots&\vdots\ar[d,shift left=.75ex,"t^*"]
		\ar[d,shift right=.75ex,swap,"s^*"]\\
		\Theta_0^\op/3\ar[r,"J^{FC}_1/3"]\ar[d,shift left=.75ex,"t^*"]
		\ar[d,shift right=.75ex,swap,"s^*"]&FC_1/3\ar[r,"J^{FC}_2/3"]\ar[d,shift left=.75ex,"t^*"]
		\ar[d,shift right=.75ex,swap,"s^*"]&FC_2/3\ar[r,"J^{FC}_3/3"]\ar[d,shift left=.75ex,"t^*"]
		\ar[d,shift right=.75ex,swap,"s^*"]&\cdots&\cong FC/3\ar[d,shift left=.75ex,"t^*"]
		\ar[d,shift right=.75ex,swap,"s^*"]\\
		\Theta_0^\op/2\ar[r,"J^{FC}_1/2"]\ar[d,shift left=.75ex,"t^*"]
		\ar[d,shift right=.75ex,swap,"s^*"]&FC_1/2\ar[r,"J^{FC}_2/2"]\ar[d,shift left=.75ex,"t^*"]
		\ar[d,shift right=.75ex,swap,"s^*"]&FC_2/2\ar[r,"J^{FC}_3/2"]\ar[d,shift left=.75ex,"t^*"]
		\ar[d,shift right=.75ex,swap,"s^*"]&\cdots&\cong FC/2\ar[d,shift left=.75ex,"t^*"]
		\ar[d,shift right=.75ex,swap,"s^*"]\\
		\Theta_0^\op/1\ar[r,"J^{FC}_1/1"]\ar[d,shift left=.75ex,"t^*"]
		\ar[d,shift right=.75ex,swap,"s^*"]&FC_1/1\ar[r,"J^{FC}_2/1"]\ar[d,shift left=.75ex,"t^*"]
		\ar[d,shift right=.75ex,swap,"s^*"]&FC_2/1\ar[r,"J^{FC}_3/1"]\ar[d,shift left=.75ex,"t^*"]
		\ar[d,shift right=.75ex,swap,"s^*"]&\cdots&\cong FC/1\ar[d,shift left=.75ex,"t^*"]
		\ar[d,shift right=.75ex,swap,"s^*"]\\
		\Theta_0^\op/0\ar[r,"J^{FC}_1/0"]&FC_1/0\ar[r,"J^{FC}_2/0"]&FC_2/0\ar[r,"J^{FC}_3/0"]&\cdots&\cong FC/0
	\end{tikzcd}
	\]
	on coslices such that 
	\[
	s^*\circ J^{FC}_k/n+1=J^{FC}_k/n\circ s^*
	\]
	\[
	t^*\circ J^{FC}_k/n+1=J^{FC}_k/n\circ t^*
	\]
	for all $n\geq 0$ and $k\geq 1$ and $J^{FC}_k/n$ is the identity-on-objects for whenever $k\geq n+1$. This suggests that there is a more inductive way to obtain an $(\infty,0)$-coherator which mimics the classic procedure of weak enrichment. Partition $I$ as 
\[
(I_k)_{k=0}^\infty=(\{S_{\vec{p},k}\xrightarrow{j_{\vec{p},k}} D_{\vec{p},k}:\vec{p}\in\ob(\Theta_0^\op)\})_{k=0}^\infty.
\]

\begin{lemma}
The sets $I_k$ are admissible for the ASOA of \cite{Garn2}.
\end{lemma}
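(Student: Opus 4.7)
The plan is to observe that admissibility for Garner's algebraic small object argument is a condition that depends only on (a) the ambient category being suitably presentable/cocomplete and (b) the domains and codomains of the generating maps being sufficiently small (presentable). Both of these conditions have already been verified for the larger set $I$, so they transfer immediately to the subset $I_k \subseteq I$.

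Concretely, first I would recall that $\Th_{\Theta_0^\op}$ is locally presentable, as noted in the earlier remark on the categories $\Ex_{\Theta_0^\op}$ and $\Th_{\Theta_0^\op}$. Next, I would invoke Lemma \ref{admiss_1}, which tells us that each object $S_{\vec{p},k}$ and $D_{\vec{p},k}$ (for $\vec{p} \in \ob(\Theta_0^\op)$ and $k \geq 0$) is a presentable object of $\Th_{\Theta_0^\op}$. Since by definition
\[
I_k = \{ S_{\vec{p},k} \xrightarrow{j_{\vec{p},k}} D_{\vec{p},k} : \vec{p} \in \ob(\Theta_0^\op)\} \subseteq I,
\]
every map of $I_k$ has presentable domain and codomain.

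Finally, I would conclude by the exact same argument given in Subsection 3.11 and Lemma 3.12 of \cite{Malt} — namely, that a set of maps with presentable domains and codomains in a locally presentable category satisfies the admissibility hypothesis of the ASOA (cf. Garner \cite{Garn2}). Since the verification is identical for any such set, the conclusion for $I_k$ follows without modification from the argument already applied to $I$.

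I do not anticipate a serious obstacle: the only thing to check is that admissibility is hereditary under passage to subsets of an already admissible generating set, which is immediate from the formulation of the hypothesis. The proof should therefore consist of one or two lines referring back to Lemma \ref{admiss_1} and the local presentability of $\Th_{\Theta_0^\op}$.
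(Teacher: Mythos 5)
Your proposal is correct and matches the paper's approach: the paper's proof is simply a reference back to Lemma \ref{admiss_1}, relying on exactly the observation you make explicit, namely that each $I_k$ is a subset of $I$ whose domains and codomains are the same presentable objects, so the admissibility verification carries over verbatim. Your write-up just spells out why that citation suffices.
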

 \begin{proof}
See Lemma \ref{admiss_1}.
\end{proof}

\begin{notation}
	Let $(L_{I_k},E_{I_k},R_{I_k},\delta_{I_k})$ be the AWFS cofibrantly generated by $I_k$ in $\Th_{\Theta_0^\op}$ for all $k\geq 0$. Moreover, let $(R_{k})_{k=0}^\infty$ denote the corresponding fibrant replacement monads on $\Th_{\Theta_0^\op}$ by the AWFSs.
\end{notation}

\noindent We now name and organize the structure data.
\begin{notation}\label{fibr_repl_AWFS}
	Let $(R_{k})_{k=0}^\infty$ be the corresponding fibrant replacement monads on $\Th_{\Theta_0^\op}$ cofibrantly generated by the AWFSs. We write the following notation:
	\begin{itemize}
		\item $\eta^{k}:1_{\Th_{\Theta_0^\op}}\Rightarrow R_{k}$ and $\mu^{k}:R_{k}^2\Rightarrow R_{k}$ will notate the unit and multiplication, respectively, corresponding to the monad $R_{k}$ for all $k\geq 0$.
	\end{itemize}
\end{notation}

\noindent We need the following lemma.

\begin{lemma}\label{univ_prop_monad_unit}
	Given a theory $C$, a map $F:C\to D$ of globular theories, and given a choice of lift $\delta_{Ff,Fg}:\vec{p}\to k+1$ in $D$ for the image under $F$ of every admissible pair of the form
	\[
		\begin{tikzpicture}[node distance=2cm]
		\node (A) {$\vec{p}$};
			\node (B)[right of=A]{$k$};
	\draw[transform canvas={yshift=0.3ex},->] (A) to node[above=3] {$f$} (B);
			\draw[transform canvas={yshift=-0.3ex},->,swap](A) to node[below=3] {$g$} (B);
	\end{tikzpicture}
	\]
	in $C$, there is a unique map
	\[
	F':R_kC\to D
	\]
	such that
	\[
	F'\circ\eta^k_C=F
	\]
	and
	\[
	F'(\delta_{f,g})=\delta_{Ff,Fg}
	\]
	for every admissible pair of the form
\[
		\begin{tikzpicture}[node distance=2cm]
		\node (A) {$\vec{p}$};
			\node (B)[right of=A]{$k$};
	\draw[transform canvas={yshift=0.3ex},->] (A) to node[above=3] {$f$} (B);
			\draw[transform canvas={yshift=-0.3ex},->,swap](A) to node[below=3] {$g$} (B);
	\end{tikzpicture}
	\]
	in $C$.
\end{lemma}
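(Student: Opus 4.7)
The plan is to deduce this universal property directly from Lemma \ref{univ_prop_of_AWFS_fact}, applied to the unique map $!_C\colon C \to 1$ into the terminal theory. Since the fibrant replacement monad $R_k$ is obtained from the AWFS $(L_{I_k}, E_{I_k}, R_{I_k}, \delta_{I_k})$ precisely by factoring maps to the terminal object, the AWFS factorization of $!_C$ is $C \xrightarrow{\eta^k_C} R_k C \to 1$. Thus any user-supplied factorization of $!_C$ satisfying the criterion of Remark \ref{fac_condtns} will, by Lemma \ref{univ_prop_of_AWFS_fact}, induce a unique comparison map out of $R_k C$.

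The key step is to choose the right user-supplied factorization: I would take it to be $C \xrightarrow{F} D \xrightarrow{!_D} 1$, where $!_D\colon D \to 1$ is the unique map. A triple $(j_{\vec{p},k}, p, q)$ in the sense of Remark \ref{fac_condtns} then consists of a map $p\colon S_{\vec{p},k} \to C$, which is exactly an admissible pair $(f,g)$ at level $k$ in $C$, together with the unique $q\colon D_{\vec{p},k} \to 1$. I define the required lift $\phi_{j_{\vec{p},k}}(p,q)\colon D_{\vec{p},k} \to D$ to be the theory map sending the generating parallel pair of $S_{\vec{p},k}$ to $(Ff, Fg)$ and the generating $(k+1)$-cell $\delta$ of $D_{\vec{p},k}$ to the hypothesized lift $\delta_{Ff, Fg}$; this is well-defined because $\delta_{Ff, Fg}$ lifts $(Ff, Fg)$ by assumption, and both triangle commutativity conditions hold automatically (the one into the terminal object being trivial).

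With this data in hand, Lemma \ref{univ_prop_of_AWFS_fact} produces a unique theory map $F'\colon R_k C \to D$ such that $F' \circ \eta^k_C = F$ and $F' \circ \delta_{j_{\vec{p},k}}(p,q) = \phi_{j_{\vec{p},k}}(p,q)$ for every such triple. Unpacking notation, the AWFS-produced lift $\delta_{j_{\vec{p},k}}(p,q)$ in $R_k C$ is precisely the added cell $\delta_{f,g}$ of Notation \ref{fibr_repl_AWFS}, so the second identity reads $F'(\delta_{f,g}) = \delta_{Ff, Fg}$, as required, and uniqueness of $F'$ is inherited directly from Lemma \ref{univ_prop_of_AWFS_fact}. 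The main obstacle is not mathematical but bookkeeping: one must carefully match the Garner-style filler notation of Lemma \ref{univ_prop_of_AWFS_fact} with the simplified $\delta_{f,g}$ notation used in this section, and verify that the lifting data indexed by $(j_{\vec{p},k},p,q)$ corresponds bijectively to admissible pairs at level $k$ in $C$ equipped with a chosen filling in $D$.
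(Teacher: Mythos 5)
Your proposal is correct and is essentially the paper's own proof: the paper disposes of this lemma in one line as ``a reinterpretation of Lemma \ref{univ_prop_of_AWFS_fact} in this setting,'' and you have carried out exactly that reinterpretation by applying it to the factorization $C \xrightarrow{F} D \to 1$ of the map $!_C$ to the terminal theory. The one imprecision is that in Remark \ref{fac_condtns} the maps $p$ land in the middle object of the supplied factorization (here $D$, not $C$), so the lifting data required by Lemma \ref{univ_prop_of_AWFS_fact} is strictly indexed by admissible pairs of $D$ rather than only those in the image of $F$ --- but the paper's one-line proof glosses over the same point, so this is not a defect of your argument relative to the source.
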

\begin{proof}
 This is a  a reinterpretation of Lemma \ref{univ_prop_of_AWFS_fact} in this setting.
\end{proof}

\begin{remark}
This universal property actually determines the entire monad structure.
\end{remark}

\begin{definition}
Let $k>0$. We say a map $F:C\to D$ of globular theories is \emph{fully faithful below dimension k} if for all $i<k$ and tables of dimensions $\vec{p}$ with $\hgt(\vec{p})\leq i+1$, the map
\[
F_{\vec{p},k}:C(\vec{p},i)\to D(\vec{p},i)
\]
is a bijection.
\end{definition}
The following lemma is true.
\begin{lemma}
The map of theories $\eta^k_C:C\to R_k(C)$ is fully faithful below dimension $k+1$ for all theories $C$.
\end{lemma}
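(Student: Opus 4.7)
\emph{Proof plan.} The plan is to reduce the claim to a property of the generating maps $j_{\vec{p},k}$ of $I_k$, and then propagate that property along the transfinite construction underlying $R_k$. The unit $\eta^k_C$ arises as the $L$-part of the $I_k$-cofibrant factorisation of $C\to 1$ produced by the ASOA, so $\eta^k_C$ is a transfinite composition of pushouts of coproducts of the generators $j_{\vec{p},k}:S_{\vec{p},k}\to D_{\vec{p},k}$. It therefore suffices to show that each $j_{\vec{p},k}$ is fully faithful below dimension $k+1$ and that the class of such maps is closed under coproducts, pushouts along arbitrary maps of theories, and transfinite composition.

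Next I would verify that $j_{\vec{p},k}$ itself is fully faithful below dimension $k+1$ by unpacking Definition \ref{disks_in_inf_gpds}: $D_{\vec{p},k}$ is obtained from $S_{\vec{p},k}$ by freely adjoining a single $(k+1)$-cell $\delta_{f,g}$ subject only to the relations $s\circ \delta_{f,g}=f$ and $t\circ \delta_{f,g}=g$. Any composite that involves $\delta_{f,g}$ has target in dimension $\geq k+1$ unless it is postcomposed with a map out of $k+1$ landing in a strictly lower-dimensional object; but the only such composites available factor through $s$ or $t$, returning a cell already present in $S_{\vec{p},k}$. Hence for $i\leq k$ the Hom-sets $S_{\vec{p},k}(\vec{q},i)$ and $D_{\vec{p},k}(\vec{q},i)$ agree.

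The closure properties are the routine part. Pushouts in $\Th_{\Theta_0^\op}$ of identity-on-objects functors admit a generators-and-relations description; since adjoining new cells and relations only in dimensions $\geq k+1$ leaves lower-dimensional Hom-sets untouched, being fully faithful below dimension $k+1$ is stable under coproduct and pushout. For limit stages, Hom-sets are preserved by filtered colimits of theories and a filtered colimit of bijections is a bijection, so a transfinite induction shows each comparison map $C_0\to C_\alpha$ is fully faithful below dimension $k+1$; passage to the top of the tower yields the claim for $\eta^k_C$.

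The main obstacle I anticipate is the second step: rigorously showing that freely adjoining $\delta_{f,g}$ to $S_{\vec{p},k}$ cannot affect any Hom-set with target of dimension $\leq k$. This requires either a normal-form description of composites in the free theory $D_{\vec{p},k}$, or an explicit description of the left adjoint that formally adjoins an admissible lift to a theory, together with a check that no unexpected equations are forced in lower dimensions. Once that is in place, the remainder of the argument is bookkeeping with colimits.
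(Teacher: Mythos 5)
The paper offers no proof of this lemma at all --- it is simply asserted (``The following lemma is true.'') --- so there is nothing internal to compare your argument against; you are supplying more than the author does. Your overall strategy (reduce to the generators $j_{\vec{p},k}$ and propagate along the construction of $R_k$) is the natural one, but as written the proposal has two real gaps. First, the factorization produced by Garner's ASOA is \emph{not} a plain transfinite composition of pushouts of coproducts of the generators: the free-monad construction interleaves coequalizers that identify redundant lifts at each successor stage. Those identifications only ever merge freely-added $(k{+}1)$-dimensional lift cells with one another, so they cannot disturb hom-sets into dimensions $\leq k$, but this has to be said; the naive cellular description you invoke is not literally what $\eta^k_C$ is.

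Second, and more seriously, the step you defer as ``the main obstacle'' is not a technical loose end --- it is the entire content of the lemma. Pushouts in $\Th_{\Theta_0^\op}$ are not computed hom-set by hom-set, and adjoining a lift $\delta_{f,g}:\vec{q}\to k+1$ can in principle create new morphisms into dimension $\leq k$: for an arbitrary theory $C$ (and the lemma is stated for all theories $C$), any operation $h:k+1\to i$ of $C$ with $i\leq k$ that does not factor through $s$ or $t$ yields a formal composite $h\circ\delta_{f,g}:\vec{q}\to i$ that the relations $s\circ\delta_{f,g}=f$, $t\circ\delta_{f,g}=g$ do not identify with anything pre-existing. Ruling this out (or restricting the class of theories so that it cannot happen, e.g.\ to theories built up from $\Theta_0^\op$ by adjoining lifts, where every operation $k+1\to i$ with $i\le k$ does factor through $s$ or $t$) requires the normal-form analysis of free globular extensions in the style of Ara/Maltsiniotis, and injectivity needs a separate argument that no relations are forced between old morphisms. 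Until that analysis is carried out, what you have is a correct plan rather than a proof.
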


\noindent We now show how the fibrant replacement monads induced by the partition forms a completable distributive series of monads through a sequence of lemmas, definition and a theorem. 
\begin{definition} \label{distlaw_comp}
	Given a globular theory $C$, we define a map
	\[
	\lambda^{i,j}_C: R_iR_jC\Rightarrow R_jR_iC
	\]
	
	for $0\leq i<j$ as follows:
	\begin{itemize}
		\item we consider the diagram 
		\[
		\begin{tikzpicture}[node distance=2cm,auto]
			\node (A) {$R_jC$};
			\node (B)[right of=A]{$R_jR_iC$};
			\node (C)[below of=A]{$R_iR_jC$};
			\draw[->,swap] (A) to node {$\eta^i_{R_jC}$} (C);
			\draw[->](A) to node {$R_j\eta^i_C$}(B);
		\end{tikzpicture}
		\]
		\item every admissible pair of the form 
		\[
		\begin{tikzpicture}[node distance=2cm]
		\node (A) {$\vec{p}$};
			\node (B)[right of=A]{$i$};
	\draw[transform canvas={yshift=0.3ex},->] (A) to node[above=3] {$f$} (B);
			\draw[transform canvas={yshift=-0.3ex},->,swap](A) to node[below=3] {$g$} (B);
	\end{tikzpicture}
	\]
	is uniquely mapped onto by an admissible pair in $C$ since $\eta^j_C$ is fully faithful below dimension $i+1$
		
		\item let $\lambda^{i,j}_C:R_iR_jC\to R_jR_iC$ be the unique map such that the diagram
		\[
			\begin{tikzpicture}[node distance=2cm,auto]
				\node (A) {$R_jC$};
				\node (B)[right of=A]{$R_jR_iC$};
				\node (C)[below of=A]{$R_iR_jC$};
				\draw[->,swap] (A) to node {$\eta^i_{R_jC}$} (C);
				\draw[->](A) to node {$R_j\eta^i_C$}(B);
				\draw[->,swap] (C) to node {$\lambda^{i,j}_C$}(B);
			\end{tikzpicture}
		\]
		commutes and 
		\[
		\lambda^{i,j}_C(\delta_{f,g})=\delta_{f,g}
		\]
	where the first $\delta_{f,g}$ is the choice of lift in $R_iR_j(C)$ and the second one is the choice of lift in $R_jR_i(C)$.
	\end{itemize}
\end{definition}

\begin{remark}
From now on, given $i\geq 0$ and an admissible pair of the form 
		\[
		\begin{tikzpicture}[node distance=2cm]
		\node (A) {$\vec{p}$};
			\node (B)[right of=A]{$i$};
	\draw[transform canvas={yshift=0.3ex},->] (A) to node[above=3] {$f$} (B);
			\draw[transform canvas={yshift=-0.3ex},->,swap](A) to node[below=3] {$g$} (B);
	\end{tikzpicture}
	\]
in a theory $C$, we write $\delta_{f,g}$ to be the choice of lift added along the extension $\eta^i_C:C\to R_iC$.
\end{remark}

\begin{lemma}\label{distr_tringle_2}
	For $0\leq i<j$ and every theory $C$, the triangle 
	\[
	\begin{tikzpicture}[node distance=2cm]
		\node (B){$R_iC$};
		\node (C)[below of=B]{};
		\node (D)[left of=C]{$R_iR_jC$};
		\node (E)[right of=C]{$R_jR_iC$};
		\draw[->](B) to node[left=3]{$R_i\eta^j_C$}(D);
		\draw[->](B) to node[right=3]{$\eta^j_{R_iC}$}(E);
		\draw[->](D) to node[below=3]{$\lambda^{i,j}_C$}(E);
	\end{tikzpicture}
	\]
	commutes.
\end{lemma}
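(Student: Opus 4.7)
The plan is to show the two maps $R_iC \rightrightarrows R_jR_iC$ are equal by invoking the universal property of the monad $R_i$ as formulated in Lemma \ref{univ_prop_monad_unit}. That universal property says a map out of $R_iC$ is uniquely determined by its precomposition with $\eta^i_C$ together with its values on the chosen lifts $\delta_{f,g}$ for every dimension-$i$ admissible pair $(f,g)$ in $C$. So it suffices to verify two things: that $\lambda^{i,j}_C \circ R_i\eta^j_C$ and $\eta^j_{R_iC}$ agree after precomposition with $\eta^i_C$, and that they take the same value on each such $\delta_{f,g}$.

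For the agreement after precomposition with $\eta^i_C$, I would stitch together three diagrams: naturality of the unit $\eta^i$ applied to the map $\eta^j_C : C \to R_jC$, which gives $R_i\eta^j_C \circ \eta^i_C = \eta^i_{R_jC} \circ \eta^j_C$; then the defining triangle of $\lambda^{i,j}_C$ from Definition \ref{distlaw_comp}, which rewrites $\lambda^{i,j}_C \circ \eta^i_{R_jC}$ as $R_j\eta^i_C$; and finally naturality of $\eta^j$ applied to $\eta^i_C$, which yields $R_j\eta^i_C \circ \eta^j_C = \eta^j_{R_iC} \circ \eta^i_C$. Chaining these identities gives $\lambda^{i,j}_C \circ R_i\eta^j_C \circ \eta^i_C = \eta^j_{R_iC} \circ \eta^i_C$, as required.

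For the agreement on lifts, fix a dimension-$i$ admissible pair $(f,g)$ in $C$. Functoriality of $R_i$ sends the lift $\delta_{f,g}$ in $R_iC$ to the lift $\delta_{\eta^j_C f, \eta^j_C g}$ in $R_iR_jC$, i.e.\ the lift added by $R_i$ for the admissible pair in $R_jC$ obtained by postcomposition with $\eta^j_C$. The second defining clause of $\lambda^{i,j}_C$ in Definition \ref{distlaw_comp} then identifies this element with the corresponding chosen lift in $R_jR_iC$, which is precisely the image under $\eta^j_{R_iC}$ of the lift $\delta_{f,g}$ already sitting in $R_iC$. Thus both maps send $\delta_{f,g}$ to the same element of $R_jR_iC$.

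The main subtlety to be careful with is the bookkeeping of the symbol $\delta_{f,g}$ across three different theories ($R_iC$, $R_iR_jC$, and $R_jR_iC$): one has to use that $\eta^j_C$ is fully faithful below dimension $j+1$, and in particular in dimension $i < j$, so that dimension-$i$ admissible pairs in $R_jC$ correspond bijectively to those in $C$, allowing the lifts added in $R_iR_jC$ to be unambiguously indexed by admissible pairs of $C$. Once that identification is made, the two verifications above are formal, and the universal property of $R_i$ closes out the proof.
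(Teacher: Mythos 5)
Your proof is correct and follows essentially the same route as the paper: the same three-diagram chain (naturality of $\eta^i$ at $\eta^j_C$, the defining triangle of $\lambda^{i,j}_C$, naturality of $\eta^j$ at $\eta^i_C$) to get agreement after precomposition with $\eta^i_C$, then agreement on the chosen lifts, then the universal property of Lemma \ref{univ_prop_monad_unit}. If anything, your bookkeeping of which admissible pairs index the lifts (dimension-$i$ pairs of $C$, transported via full faithfulness of $\eta^j_C$) is stated more carefully than in the paper's own proof.
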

\begin{proof}
	Notice that the parts of the diagram
	\[
	\begin{tikzpicture}[node distance=2cm]
		\node (A){$C$};
		\node (B)[node distance=5cm,left of=A]{$R_iC$};
		\node (C)[below of=B]{};
		\node (D)[left of=C]{$R_iR_jC$};
		\node (E)[right of=C]{$R_jR_iC$};
		\node (F)[below of=C]{$R_jC$};
		\node (G)[node distance=1.5cm,above of=F]{$=$};
		\node (H)[node distance=1cm,right of=E]{$=$};
		\draw[->](A) to node[above=3]{$\eta^i_C$} (B);
		\draw[->,bend left=60](A) to node[right=3]{$\eta^j_C$}(F);
		\draw[->](B) to node[left=3]{$R_i\eta^j_C$}(D);
		\draw[->](B) to node[right=3]{$\eta^j_{R_iC}$}(E);
		\draw[->](D) to node[above=3]{$\lambda^{i,j}_C$}(E);
		\draw[->](F) to node[left=3]{$\eta^i_{R_jC}$}(D);
		\draw[->](F) to node[right=3]{$R_j\eta^i_C$}(E);
	\end{tikzpicture}
	\]
	with equal signs in the middle commute and the outside of the diagram commutes as well by definition and naturality. This means that
	\[
	\eta^j_{R_iC}\circ \eta^i_C=R_j\eta^i_C\circ \eta^j_C
	\]
	\[
	=\lambda^{i,j}_C\circ\eta^i_{R_jC}\circ\eta^j_C=\lambda^{i,j}_C\circ R_i\eta^j_C\circ \eta^i_C.
	\]
Given an admissible pair
\[
		\begin{tikzpicture}[node distance=2cm]
		\node (A) {$\vec{p}$};
			\node (B)[right of=A]{$j$};
	\draw[transform canvas={yshift=0.3ex},->] (A) to node[above=3] {$f$} (B);
			\draw[transform canvas={yshift=-0.3ex},->,swap](A) to node[below=3] {$g$} (B);
	\end{tikzpicture}
	\]
in $R_i(C)$, we are forced to have that
\[
\lambda^{i,j}_C(R_i(\eta^j_C)(\delta_{f,g}))=\lambda^{i,j}_C(\delta_{f,g})=\delta_{f,g}=\eta^j_{R_i(C)}(\delta_{f,g}).
\]
 By the universal property of Lemma \ref{univ_prop_monad_unit}, the triangle without an equal sign must commute. 
\end{proof}

\begin{lemma}\label{dist_law_ij}
	The maps $\lambda^{i,j}_C:R_iR_jC\to R_jR_iC$ of Lemma \ref{distlaw_comp} form a natural transformation
	\[
	\lambda^{i,j}: R_iR_j\Rightarrow R_jR_i
	\]
	for $0\leq i<j$.
\end{lemma}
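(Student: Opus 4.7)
The plan is to verify naturality by the universal property of Lemma \ref{univ_prop_monad_unit}. Fix a morphism $F\colon C\to D$ of globular theories; I must show that
\[
\lambda^{i,j}_D\circ R_iR_jF \;=\; R_jR_iF\circ \lambda^{i,j}_C
\]
as maps $R_iR_jC\to R_jR_iD$. Viewing both as maps out of $R_i(R_jC)$, Lemma \ref{univ_prop_monad_unit} reduces the verification to two checks: (a) the two composites agree after precomposition with $\eta^i_{R_jC}$, and (b) they agree on every distinguished lift $\delta_{f,g}\in R_iR_jC$ arising from an admissible pair $(f,g)$ of dimension $i$ in $R_jC$.

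Condition (a) is a diagram chase using naturality of $\eta^i$ together with the defining triangle in Definition \ref{distlaw_comp}. The left-hand composite yields $\lambda^{i,j}_D\circ\eta^i_{R_jD}\circ R_jF = R_j\eta^i_D\circ R_jF = R_jR_iF\circ R_j\eta^i_C$, while the right-hand composite becomes $R_jR_iF\circ R_j\eta^i_C$ directly from the triangle defining $\lambda^{i,j}_C$. These agree.

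Condition (b) requires some bookkeeping. Since $i<j$ and $\eta^j_C$ is fully faithful below dimension $j+1$, the pair $(f,g)$ lifts uniquely to an admissible pair $(f_0,g_0)$ in $C$. I would then record a generic lemma: for any morphism $H\colon X\to Y$ of globular theories and any admissible pair of dimension $i$ in $X$, one has $R_iH(\delta_{f,g})=\delta_{Hf,Hg}$ in $R_iY$; this follows from applying Lemma \ref{univ_prop_monad_unit} to the composite $\eta^i_Y\circ H$ and invoking the uniqueness clause that characterises $R_iH$ as the induced map. Granted this, the left composite sends $\delta_{f,g}$ via $R_iR_jF$ to $\delta_{R_jF(f),R_jF(g)}\in R_iR_jD$ and then via $\lambda^{i,j}_D$ to the distinguished lift in $R_jR_iD$, which unwinds as $\eta^j_{R_iD}(\delta_{Ff_0,Fg_0})$. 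The right composite sends $\delta_{f,g}$ via $\lambda^{i,j}_C$ to $\eta^j_{R_iC}(\delta_{f_0,g_0})$, and then via $R_jR_iF$, using naturality of $\eta^j$ and the generic lemma for $R_i$, to $\eta^j_{R_iD}(\delta_{Ff_0,Fg_0})$. Hence the two composites agree on lifts, and naturality follows.

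The main obstacle is purely the bookkeeping in step (b): the symbol $\delta_{f,g}$ denotes a different element in each of $R_iR_jC$, $R_jR_iC$, $R_iR_jD$, $R_jR_iD$, obtained by pushing the lift of the common preimage $(f_0,g_0)\in C$ along different units. Once one carefully unwinds which unit is being applied where, and combines this with the generic identity $R_iH(\delta)=\delta_{H(-)}$ and naturality of $\eta^i$ and $\eta^j$, the verification is routine.
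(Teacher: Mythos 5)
Your proposal is correct and follows the paper's own strategy: both arguments reduce naturality to the uniqueness clause of Lemma \ref{univ_prop_monad_unit} applied to $\eta^i_{R_jC}$, checking that the two composites agree after precomposition with the unit and on the distinguished lifts $\delta_{f,g}$, with the same bookkeeping of where chosen lifts are sent. The only (harmless) difference is in step (a): you verify the precomposition identity directly from the defining triangle $\lambda^{i,j}_C\circ\eta^i_{R_jC}=R_j\eta^i_C$ and naturality of $\eta^i$, whereas the paper precomposes further with $\eta^j_C$ and invokes the universal property a second time via Lemma \ref{distr_tringle_2}; your route is slightly shorter but leads to the same conclusion.
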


\begin{proof}
	Fix $0\leq i<j$ and let $F:C\to D$ be a map. Notice that 
	\[
	R_jR_iF\circ\lambda^{i,j}_C\circ \eta^i_{R_jC}\circ \eta^j_C=	R_jR_iF\circ\lambda^{i,j}\circ R_i\eta^j_C\circ \eta^i_C
	\]
	\[
	=R_jR_iF\circ \eta^j_{R_iC} \circ \eta^i_C=\eta^j_{R_iD}\circ R_iF\circ \eta^i_C
	\]
	\[
	=\lambda^{i,j}_D\circ R_i\eta^j_D\circ R_iF\circ \eta^i_C=\lambda^{i,j}_D\circ R_iR_jF\circ R_i\eta^j_C \circ \eta^i_C
	\]
	\[
	=\lambda^{i,j}_D\circ R_iR_jF\circ \eta^i_{R_jC} \circ \eta^i_C.
	\]
	\noindent Given an admissible pair 
	\[
		\begin{tikzpicture}[node distance=2cm]
		\node (A) {$\vec{p}$};
			\node (B)[right of=A]{$j$};
	\draw[transform canvas={yshift=0.3ex},->] (A) to node[above=3] {$f$} (B);
			\draw[transform canvas={yshift=-0.3ex},->,swap](A) to node[below=3] {$g$} (B);
	\end{tikzpicture}
	\]
	in $C$, we are forced to have 
	\[
	(R_jR_iF\circ\lambda^{i,j}_C\circ \eta^i_{R_jC})(\delta_{f,g})=(\lambda^{i,j}_D\circ R_iR_jF\circ \eta^i_{R_jC})(\delta_{f,g}).
	\]
	
	Upon application of the universal property of Lemma \ref{univ_prop_monad_unit}, we must have that
	\[
	R_jR_iF\circ\lambda^{i,j}_C\circ \eta^i_{R_jC}=\lambda^{i,j}_D\circ R_iR_jF\circ \eta^i_{R_jC}.
	\]
Given an admissible pair 
	\[
		\begin{tikzpicture}[node distance=2cm]
		\node (A) {$\vec{p}$};
			\node (B)[right of=A]{$i$};
	\draw[transform canvas={yshift=0.3ex},->] (A) to node[above=3] {$f$} (B);
			\draw[transform canvas={yshift=-0.3ex},->,swap](A) to node[below=3] {$g$} (B);
	\end{tikzpicture}
	\]
	in $R_j(C)$, diagram chasing where choices are sent forces us to have that
	\[
	(R_jR_iF\circ\lambda^{i,j}_C)(\delta_{f,g})=(\lambda^{i,j}_D\circ R_iR_jF)(\delta_{f,g}).
	\]
	By the universal property of Lemma \ref{univ_prop_monad_unit}, the diagram
	\[
	\begin{tikzpicture}[node distance=2cm,auto]
		\node (A) {$R_iR_jC$};
		\node (B)[right of=A] {$R_jR_iC$};
		\node(C)[below of=A] {$R_iR_jD$};
		\node (D)[right of=C] {$R_jR_iD$};
		\draw[->] (A) to node {$\lambda^{i,j}_C$}(B);
		\draw[->,swap] (A) to node{$R_iR_jF$}(C);
		\draw[->] (B) to node{$R_jR_iF$}(D);
		\draw[->,swap] (C) to node {$\lambda^{i,j}_D$}(D);
	\end{tikzpicture}
	\]
must commute. Therefore, $\lambda^{i,j}$ is a natural transformation for $0\leq i<j$.
\end{proof}

Before we continue onto the next lemma, we remark that all the diagrams that have commuted thus far have commuted because both ways of sending a choice of lift around the appropriate diagrams are the same. This is merely a consequence of the construction of the units of the monads and the rest of the structure being generated by the units. The diagrams going forward will commute for the exact same reason. 

\begin{lemma}\label{distr_ser_fibr_repl}
	For $0\leq i<j$, $\lambda^{i,j}:R_iR_j\Rightarrow R_jR_i$ is a distributive law.	
\end{lemma}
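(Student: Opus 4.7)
I must verify the four diagrams in Definition \ref{mon_over_mon} with $S = R_i$, $T = R_j$, and $\lambda = \lambda^{i,j}$. The two unit triangles are already in hand: the triangle $\lambda^{i,j} \circ \eta^i R_j = R_j \eta^i$ is built into Definition \ref{distlaw_comp}, and the triangle $\lambda^{i,j} \circ R_i \eta^j = \eta^j R_i$ is Lemma \ref{distr_tringle_2}. So only the two multiplication pentagons require work.

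I will treat the pentagon $\lambda^{i,j} \circ \mu^i R_j = R_j \mu^i \circ \lambda^{i,j} R_i \circ R_i \lambda^{i,j}$ as natural transformations $R_i R_i R_j \Rightarrow R_j R_i$; the other pentagon is symmetric. The key tool is the universal property of Lemma \ref{univ_prop_monad_unit}: a map out of $R_i(R_i R_j C)$ is determined by its restriction along $\eta^i_{R_i R_j C}$ together with its action on the chosen lifts $\delta_{f,g}$ of admissible pairs of dimension $i$ in $R_i R_j C$. So I check equality on these two pieces of data.

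For the restriction along $\eta^i_{R_iR_jC}$, the left-hand side collapses via the $R_i$-unit law to $\lambda^{i,j}$. For the right-hand side, first slide $\eta^i$ past $R_i\lambda^{i,j}$ by naturality to get $\eta^i_{R_jR_i} \circ \lambda^{i,j}$, then apply the defining triangle of $\lambda^{i,j}$ from Definition \ref{distlaw_comp} (whiskered by $R_i$) to rewrite $\lambda^{i,j}_{R_i} \circ \eta^i_{R_jR_i}$ as $R_j \eta^i_{R_i}$, and finally collapse $R_j\mu^i \circ R_j\eta^i_{R_i}$ via the $R_i$-unit law to obtain $\lambda^{i,j}$ again. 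For the chosen lifts, both sides send each $\delta_{f,g}$ to $\delta_{f,g}$: by the explicit definition, $\lambda^{i,j}$ preserves chosen lifts, and the structure maps $\mu^i$ and $R_i(-)$ also send each chosen lift to the canonical chosen lift of the corresponding admissible pair, as remarked immediately before the statement. Invoking the universal property then forces the pentagon to commute on the nose.

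The second pentagon, $\lambda^{i,j} \circ R_i \mu^j = \mu^j R_i \circ \lambda^{i,j} R_j \circ R_j \lambda^{i,j}$, is handled in the mirror-image fashion: apply Lemma \ref{univ_prop_monad_unit} for the monad $R_j$ to $R_j(R_iR_jC)$, restrict both sides along $\eta^j_{R_iR_jC}$, and use Lemma \ref{distr_tringle_2} (in place of the defining triangle) together with the $R_j$-unit law; on chosen lifts, the preservation observation again yields agreement. The main obstacle throughout is bookkeeping: one must keep careful track of which variable each naturality square is being whiskered over, and check in each case that the admissible pair whose lift is being tracked lives in the correct dimension range so that the fully-faithful-below-dimension property of the unit applies. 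Once the bookkeeping is set, every diagram commutes for the same reason flagged by the author: both routes send the canonical choice of lift to the canonical choice of lift.
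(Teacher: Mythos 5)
Your treatment of the unit triangles and of the first multiplication pentagon is correct and follows the paper's method: reduce along units of $R_i$, check agreement on the chosen lifts $\delta_{f,g}$, and invoke the universal property of Lemma \ref{univ_prop_monad_unit}. Indeed your version is slightly more economical than the paper's, since you strip a single copy of $\eta^i$ (using $\mu^i\circ\eta^i_{R_i}=1$ to collapse the left-hand side at once), whereas the paper precomposes with $\eta^i_{R_iR_j}\circ\eta^i_{R_j}$ and applies the universal property twice. Your computation that both composites restrict to $\lambda^{i,j}$ along $\eta^i_{R_iR_jC}$ is correct.

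However, your reduction for the second pentagon does not typecheck as stated. That pentagon asserts $\lambda^{i,j}\circ R_i\mu^j=\mu^j_{R_i}\circ R_j\lambda^{i,j}\circ\lambda^{i,j}_{R_j}$ (note the order: $\lambda^{i,j}_{R_j}$ is applied first, which your displayed composite reverses), and both composites have domain $R_iR_j^2C$. This is not of the form $R_j(X)$ at the outermost level, so Lemma \ref{univ_prop_monad_unit} ``for the monad $R_j$'' cannot be applied by restricting along $\eta^j_{R_iR_jC}$ --- that map does not even land in $R_iR_j^2C$. The repair still uses the $R_i$-universal property: restrict both composites along $\eta^i_{R_j^2C}\colon R_j^2C\to R_iR_j^2C$. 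The left side becomes $\lambda^{i,j}\circ\eta^i_{R_j}\circ\mu^j=R_j\eta^i\circ\mu^j=\mu^j_{R_i}\circ R_j^2\eta^i$ (naturality of $\eta^i$, the defining triangle of Definition \ref{distlaw_comp}, naturality of $\mu^j$), while the right side becomes $\mu^j_{R_i}\circ R_j\lambda^{i,j}\circ R_j\eta^i_{R_j}=\mu^j_{R_i}\circ R_j^2\eta^i$ (the defining triangle at the theory $R_jC$, then the same triangle again inside $R_j(-)$); the chosen-lift check is as in your first pentagon. So the second pentagon is not a mechanical mirror of the first --- the paper's own ``by a symmetric argument'' glosses over the same point --- and this one step must be corrected before your write-up is a complete proof.
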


\begin{proof}
	By the construction of Definition \ref{distlaw_comp} and Lemma \ref{distr_tringle_2}, 
\[
		\begin{tikzpicture}[node distance=2cm,auto]
			\node (A) {$R_jC$};
			\node (B)[right of=A]{$R_jR_iC$};
			\node (C)[below of=A]{$R_iR_jC$};
			\draw[->,swap] (A) to node {$\eta^i_{R_jC}$} (C);
			\draw[->](A) to node {$R_j\eta^i_C$}(B);
			\draw[->,swap] (C) to node {$\lambda^{i,j}_C$}(B);
		\end{tikzpicture}	
\]
	and 
	\[
	\begin{tikzpicture}[node distance=2cm]
		\node (B){$R_iC$};
		\node (C)[below of=B]{};
		\node (D)[left of=C]{$R_iR_jC$};
		\node (E)[right of=C]{$R_jR_iC$};
		\draw[->](B) to node[left=3]{$R_i\eta^j_C$}(D);
		\draw[->](B) to node[right=3]{$\eta^j_{R_iC}$}(E);
		\draw[->](D) to node[below=3]{$\lambda^{i,j}_C$}(E);
	\end{tikzpicture}
	\]
	both commute for every theory $C$. 
	
	\noindent We now diagram chase and obtain the equality
	\[
	\lambda^{i,j}\circ \mu^i_{R_j}\circ \eta^i_{R_iR_j}\circ\eta^i_{R_j}=\lambda^{i,j}\circ\eta^i_{R_j}=R_j\eta^i=R_j\mu^i\circ R_j\eta^i_{R_i}\circ R_j\eta^i
	\]
	\[
	=R_j\mu^i\circ\lambda^{i,j}_{R_i}\circ \eta^i_{R_jR_i}\circ R_j\eta^i=R_j\mu^i\circ\lambda^{i,j}_{R_i}\circ R_iR_j\eta^i\circ\eta^i_{R_j}
	\]
	\[
	=R_j\mu^i\circ\lambda^{i,j}_{R_i}\circ R_i\lambda^{i,j}\circ R_i\eta^i_{R_j}\circ\eta^i_{R_j}=R_j\mu^i\circ\lambda^{i,j}_{R_i}\circ R_i\lambda^{i,j}\circ \eta^i_{R_iR_j}\circ\eta^i_{R_j}
	\]
	of natural transformations. Let $C$ be a theory and let 
	\[
		\begin{tikzpicture}[node distance=2cm]
		\node (A) {$\vec{p}$};
			\node (B)[right of=A]{$i$};
	\draw[transform canvas={yshift=0.3ex},->] (A) to node[above=3] {$f$} (B);
			\draw[transform canvas={yshift=-0.3ex},->,swap](A) to node[below=3] {$g$} (B);
	\end{tikzpicture}
	\]
	be an admissible pair in $R_j(C)$. We are forced to have that
	\[
	(\lambda^{i,j}\circ \mu^i_{R_j}\circ \eta^i_{R_iR_j})_C(\delta_{f,g})=(R_j\mu^i\circ\lambda^{i,j}_{R_i}\circ R_i\lambda^{i,j}\circ \eta^i_{R_iR_j})_C(\delta_{f,g}).
	\]
	The universal property of Lemma \ref{univ_prop_monad_unit} forces
	\[
	(\lambda^{i,j}\circ \mu^i_{R_j}\circ \eta^i_{R_iR_j})_C=(R_j\mu^i\circ\lambda^{i,j}_{R_i}\circ R_i\lambda^{i,j}\circ \eta^i_{R_iR_j})_C.
	\]
	Similarly, the diagram
	\[
	\begin{tikzpicture}[node distance=2cm]
		\node (E)[below of=D]{$R_i^2R_j$};
		\node (F)[right of=E]{$R_iR_j$};
		\node (G)[right of=F]{$R_jR_i$};
		\node (H)[below of=E]{$R_iR_jR_i$};
		\node (I)[node distance=4cm,right of=H,swap]{$R_jR_i^2$};
		
		\draw[->](E) to node[above=3]{$\mu^i_{R_j}$}(F);
		\draw[->](F) to node[above=3]{$\lambda^{i,j}$}(G);
		\draw[->,swap] (E) to node[left=3]{$R_i\lambda^{i,j}$}(H);
		\draw[->,swap] (H) to node[below=3]{$\lambda^{i,j}_{R_i}$}(I);
		\draw[->](I) to node[right =3]{$R_j\mu^i$}(G);
	\end{tikzpicture}
	\]
commutes by the universal property of Lemma \ref{univ_prop_monad_unit}.  By a symmetric argument, the rectangle 
	\[
	\begin{tikzpicture}[node distance=2cm]
		\node (A) {$R_iR_j^2$};
		\node (B)[below of =A]{$R_iR_j$};
		\node (F) [right of=A]{$R_jR_iR_j$};
		\node (G) [right of=F]{$R_j^2R_i$};
		\node (H) [below of=G]{$R_jR_i$};
		\draw[->,swap] (A) to node[black,left=3] {$R_i\mu^j$}(B);
		\draw[->] (A) to node [black,above=3] {$\lambda^{i,j}_{R_j}$}(F);
		\draw[->] (F) to node[black,above=3] {$R_j\lambda^{i,j}$}(G);
		\draw[->](G) to node[black,right=3] {$\mu^jR_i$}(H);
		\draw[->](B) to node[black,below=3] {$\lambda^{i,j}$}(H);
	\end{tikzpicture}
	\]
	commutes. We have thus proven that $\lambda^{i,j}:R_iR_j\Rightarrow R_jR_i$ is a distributive law.
\end{proof}

\begin{lemma}\label{yang_baxt_gpds}
	The Yang-Baxter equation 
	
	\[
	\begin{tikzpicture}[node distance=1.5cm]
		\node (A){$R_iR_jR_k$};
		\node (B)[right of=A]{};
		\node (C)[above of =B]{$R_jR_iR_k$};
		\node (D)[below of =B]{$R_iR_kR_j$};
		\node (E)[right of=C,node distance=3cm]{$R_jR_kR_i$};
		\node (F)[right of=D,node distance=3cm]{$R_kR_iR_j$};
		\node (G)[right of=B]{};
		\node (H)[right of =G,node distance=3cm]{$R_kR_jR_i$};
		\draw[->](A) to node[left=3]{$\lambda^{i,j}R_k$}(C);
		\draw[->,swap](A) to node[left=3]{$R_i\lambda^{j,k}$}(D);
		\draw[->](C)to node[above=3]{$R_j\lambda^{i,k}$}(E);
		\draw[->,swap](D) to node[below=3]{$\lambda^{i,k}R_j$}(F);
		\draw[->](E) to node[right=3]{$\lambda^{j,k}R_i$}(H);
		\draw[->,swap](F) to node[right=3]{$R_k\lambda^{i,j}$}(H);
	\end{tikzpicture}
	\]
	
	is satisfied for $0\leq i<j<k$.
\end{lemma}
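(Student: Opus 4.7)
The plan is to follow exactly the template used in the proofs of Lemma~\ref{dist_law_ij} and Lemma~\ref{distr_ser_fibr_repl}: reduce the required equality of natural transformations $R_iR_jR_k \Rightarrow R_kR_jR_i$ to a check on (i)~the precomposition with enough units to land on the identity theory, and (ii)~the action on freshly added chosen lifts. Concretely, Lemma~\ref{univ_prop_monad_unit} applied to $R_i$ shows that two natural transformations out of $R_iR_jR_k$ with the same codomain agree iff they agree after precomposition with $\eta^i_{R_jR_k}$ and send the dimension-$i$ chosen lifts $\delta_{f,g}\in R_iR_jR_kC$ to the same element. Applying the same universal property to $R_j$ and then $R_k$, I reduce the hexagon to checking agreement after precomposition with
\[
\eta^i_{R_jR_k C}\circ R_i\eta^j_{R_k C}\circ R_iR_j\eta^k_C
\]
(equivalently, agreement on the image of $C$ under this triple unit) together with compatibility on the dimension-$i$, dimension-$j$, and dimension-$k$ chosen lifts added at each stage.

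For the unit part, I would repeatedly use the two triangle identities established in Lemma~\ref{distr_tringle_2}, namely $\lambda^{p,q}_C\circ\eta^p_{R_qC}=R_q\eta^p_C$ and $\lambda^{p,q}_C\circ R_p\eta^q_C=\eta^q_{R_pC}$, to rewrite each of the six edges of the hexagon when precomposed with the appropriate unit. A direct diagram chase then shows that both legs of the hexagon, when precomposed with $\eta^i_{R_jR_k}\circ R_i\eta^j_{R_k}\circ R_iR_j\eta^k$, collapse to the common composite $\eta^k_{R_jR_i}\circ R_k\eta^j_{R_i}\circ R_kR_j\eta^i$; this uses only that the $R_p$'s are monads, that $\lambda^{p,q}$ satisfies the triangle identities on unit inputs, and naturality.

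For the lift part, by construction of each $\lambda^{p,q}$ in Definition~\ref{distlaw_comp}, every $\lambda^{p,q}$ sends a chosen lift $\delta_{f,g}$ in the source to the correspondingly named chosen lift $\delta_{f,g}$ in the target. Hence both legs of the hexagon send any chosen lift $\delta_{f,g}$ of dimension $i$, $j$, or $k$ in $R_iR_jR_kC$ to the element $\delta_{f,g}$ in $R_kR_jR_iC$; the three applications of Lemma~\ref{univ_prop_monad_unit} then force equality of the two legs as natural transformations. This is exactly the mechanism used in the proof of Lemma~\ref{distr_ser_fibr_repl} and proceeds identically here.

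The only obstacle is bookkeeping: the hexagon involves six composites of the form $(\lambda^{a,b})_?$ with various whiskerings, and matching them pairwise via the triangle identities requires writing out six equalities of natural transformations. However, once one commits to the reduction strategy above, each individual equality is a one-line application of a triangle identity or naturality, so the argument is routine and can be given in the same compressed style as Lemma~\ref{distr_ser_fibr_repl}, with the closing remark that both sides agree on chosen lifts by construction of the $\lambda^{p,q}$.
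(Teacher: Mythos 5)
Your proposal is correct and follows the same mechanism as the paper's proof: reduce the hexagon to an identity on unit inputs via the triangle identities of Lemma \ref{distr_tringle_2} and naturality, track where the chosen lifts are sent, and invoke the universal property of Lemma \ref{univ_prop_monad_unit}. The only difference is one of economy: the paper applies that universal property a single time, precomposing only with $\eta^i_{R_jR_k}$ and using naturality of $\lambda^{j,k}$ to collapse both legs to the common composite $R_kR_j\eta^i\circ\lambda^{j,k}$, so that only the dimension-$i$ lifts need to be tracked, whereas your triple reduction also obliges you to track the dimension-$j$ and dimension-$k$ lifts at the intermediate stages (and your triple-unit composite should be written as $\eta^i_{R_jR_kC}\circ\eta^j_{R_kC}\circ\eta^k_C$ for the arrows to compose).
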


\begin{proof}
	Following through a diagram chase, we obtain that 
	\[
	\lambda^{j,k}_{R_i}\circ R_j\lambda^{i,k}\circ \lambda^{i,j}_{R_k}\circ \eta^i_{R_jR_k}=\lambda^{j,k}_{R_i}\circ R_j\lambda^{i,k}\circ R_j\eta^i_{R_k}
	\]
	\[
	=\lambda^{j,k}_{R_i}\circ R_jR_k\eta^i=R_kR_j\eta^i\circ \lambda^{j,k}
	\]
	\[
	=R_k\lambda^{i,j}\circ R_k\eta^i_{R_j} \circ \lambda^{j,k}=R_k\lambda^{i,j}\circ \lambda^{i,k}_{R_j}\circ \eta^i_{R_kR_j} \circ \lambda^{j,k}
	\]
	\[
	=R_k\lambda^{i,j}\circ \lambda^{i,k}_{R_j}\circ R_i\lambda^{j,k} \circ \eta^i_{R_jR_k}.
	\]
	Let $C$ be a theory and \[
		\begin{tikzpicture}[node distance=2cm]
		\node (A) {$\vec{p}$};
			\node (B)[right of=A]{$i$};
	\draw[transform canvas={yshift=0.3ex},->] (A) to node[above=3] {$f$} (B);
			\draw[transform canvas={yshift=-0.3ex},->,swap](A) to node[below=3] {$g$} (B);
	\end{tikzpicture}
	\]
	be an admissible pair in $R_j(R_k(C))$. We are forced to have that 
\[
(\lambda^{j,k}_{R_i}\circ R_j\lambda^{i,k}\circ \lambda^{i,j}_{R_k})_C(\delta_{f,g})=(R_k\lambda^{i,j}\circ \lambda^{i,k}_{R_j}\circ R_i\lambda^{j,k})_C(\delta_{f,g}).
\]
 Upon application of the universal property of Lemma \ref{univ_prop_monad_unit}, the diagram
	\[
	\begin{tikzpicture}[node distance=1.5cm]
		\node (A){$R_iR_jR_k$};
		\node (B)[right of=A]{};
		\node (C)[above of =B]{$R_jR_iR_k$};
		\node (D)[below of =B]{$R_iR_kR_j$};
		\node (E)[right of=C,node distance=3cm]{$R_jR_kR_i$};
		\node (F)[right of=D,node distance=3cm]{$R_kR_iR_j$};
		\node (G)[right of=B]{};
		\node (H)[right of =G,node distance=3cm]{$R_kR_jR_i$};
		\draw[->](A) to node[left=3]{$\lambda^{i,j}R_k$}(C);
		\draw[->,swap](A) to node[left=3]{$R_i\lambda^{j,k}$}(D);
		\draw[->](C)to node[above=3]{$R_j\lambda^{i,k}$}(E);
		\draw[->,swap](D) to node[below=3]{$\lambda^{i,k}R_j$}(F);
		\draw[->](E) to node[right=3]{$\lambda^{j,k}R_i$}(H);
		\draw[->,swap](F) to node[right=3]{$R_k\lambda^{i,j}$}(H);
	\end{tikzpicture}
	\] 
	must commute.
\end{proof}

\begin{theorem}\label{distr_ser_AWFS}
	The fibrant replacement monads of Notation \ref{fibr_repl_AWFS} may be upgraded to form a distributive series
	\[
	R:=((R_{k})_{k=0}^\infty,(\lambda^{i,j})_{j>i\geq 0})
	\]
\end{theorem}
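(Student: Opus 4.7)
The plan is to assemble this statement directly from the three lemmas that immediately precede it. The definition of a distributive series of monads requires (i) an $\mathbb{N}$-indexed collection of monads, (ii) a natural transformation $\lambda^{i,j}\colon T_iT_j \Rightarrow T_jT_i$ for each pair $j>i\geq 0$ that is a distributive law of monads in the sense of Beck (Definition \ref{mon_over_mon}), and (iii) the Yang-Baxter/hexagon equation for every triple $k>j>i\geq 0$. Each of these items has just been verified.

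More concretely, I would proceed in three sentences. First, recall from Notation \ref{fibr_repl_AWFS} that the $R_k$ are genuine monads on $\Th_{\Theta_0^\op}$, produced as fibrant replacement monads of the AWFSs $(L_{I_k},E_{I_k},R_{I_k},\delta_{I_k})$; this supplies the monad data $(R_k,\eta^k,\mu^k)$. Second, Definition \ref{distlaw_comp} constructs, for each theory $C$ and each pair $j>i\geq 0$, a map $\lambda^{i,j}_C\colon R_iR_jC \to R_jR_iC$, and Lemma \ref{dist_law_ij} promotes these components to a natural transformation $\lambda^{i,j}\colon R_iR_j \Rightarrow R_jR_i$; then Lemma \ref{distr_ser_fibr_repl} verifies that each such $\lambda^{i,j}$ satisfies the four coherence squares with the units and multiplications of $R_i$ and $R_j$, so it is a distributive law in the sense of Beck. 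Third, Lemma \ref{yang_baxt_gpds} establishes the Yang-Baxter hexagon for every triple $0\leq i<j<k$, which is precisely the remaining axiom in the definition of a distributive series of monads.

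In short, the theorem is a direct repackaging: the tuple $R := ((R_k)_{k=0}^\infty,(\lambda^{i,j})_{j>i\geq 0})$ has all the required structure, and all the compatibility axioms have been verified. There is no hidden obstacle remaining, because the genuinely substantive step---showing that every compatibility diagram one might want is forced by the universal property of Lemma \ref{univ_prop_monad_unit} once both sides agree on units and on choices of lifts $\delta_{f,g}$---has already been carried out in each of the lemmas above. The only thing worth emphasizing in the write-up is that the construction yields an honest $\mathbb{N}$-indexed distributive series (not merely a finite one), which is legitimate because Definition \ref{distlaw_comp}, Lemma \ref{dist_law_ij}, Lemma \ref{distr_ser_fibr_repl}, and Lemma \ref{yang_baxt_gpds} were all stated uniformly for arbitrary indices.
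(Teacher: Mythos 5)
Your proposal is correct and matches the paper's proof, which is the one-line instruction to combine the three preceding lemmas (the naturality of $\lambda^{i,j}$, the distributive-law axioms, and the Yang--Baxter equation). You simply spell out more explicitly which axiom of a distributive series each lemma supplies, which is a faithful expansion rather than a different argument.
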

\begin{proof} 
	Put the last three lemmas together.
\end{proof}

Let $(\hat{R},\hat{\eta})$ be the associated pointed endofunctor of the distributive series of Theorem \ref{distr_ser_AWFS} (see Definition \ref{ind_funct_on_dist}), which exists since $\Th_{\Theta_0^\op}$ is cocomplete. We now show the distributive series is completable. 

\begin{theorem}\label{ind_monad_AWFS}
	The structure $(\hat{R},\hat{\eta})$ extends to a monad structure on $\Th_{\Theta_0^\op}$. Moreover, the distributive series is completable. 
\end{theorem}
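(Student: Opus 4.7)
The plan is to construct the multiplication $\hat{\mu}\colon \hat{R}^2 \Rightarrow \hat{R}$ by iterated application of the universal property in Lemma \ref{univ_prop_monad_unit}, leveraging the observation that $\hat{R}C$ is canonically contractible for every globular theory $C$. The first step is to show that every admissible pair $(f,g)$ of dimension $k$ in $\hat{R}C$ admits a well-defined canonical lift $\delta^{\hat{R}C}_{f,g}$: by sequential smallness of the spheres $S_{\vec{p},k}$ (Lemma \ref{admiss_1}) together with filteredness of the colimit defining $\hat{R}$, the pair factors through some $\hat{R}_m C$, and for $m \geq k$ the fully-faithful-below-dimension property of the units stabilizes the maps into dimension $k$, so the pair comes from a unique admissible pair of dimension $k$ in $\hat{R}_k C$ whose canonical lift (produced by $R_k$) is then transported into $\hat{R}C$ along $\kappa_m$. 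Independence from the choice of $m$ and the choice of factorization reduces to the uniqueness clause in Lemma \ref{univ_prop_monad_unit}.

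Next, for each $n \geq 0$, iterating Lemma \ref{univ_prop_monad_unit} through $R_0, R_1, \ldots, R_n$ yields a unique $\hat{R}_n$-algebra structure $\alpha_n^C\colon \hat{R}_n(\hat{R}C) \to \hat{R}C$ extending the identity on $\hat{R}C$ and sending each formal lift introduced by $R_k$ (for $k \leq n$) to the canonical lift $\delta^{\hat{R}C}_{f,g}$ constructed in the first step. The same uniqueness clause forces $\alpha_n^C$ and $\alpha_{n+1}^C$ to agree after precomposition with the connecting map $\hat{R}_n(\hat{R}C) \to \hat{R}_{n+1}(\hat{R}C)$, so the family $\{\alpha_n^C\}$ assembles into a unique map $\hat{\mu}_C\colon \hat{R}(\hat{R}C) \to \hat{R}C$ satisfying $\hat{\mu}_C \circ \kappa_{n,\hat{R}C} = \alpha_n^C$ for all $n$. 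Naturality in $C$ is another uniqueness check, comparing the two induced maps $\hat{R}(\hat{R}C) \to \hat{R}D$ associated to a morphism $C \to D$.

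For the monad axioms, the left-unit identity $\hat{\mu}_C \circ \hat{\eta}_{\hat{R}C} = 1_{\hat{R}C}$ reduces to the corresponding equation for each $\alpha_n^C$ by construction; the right-unit law and associativity follow from uniqueness, as both sides of each equation extend the identity on $\hat{R}C$ (respectively $\hat{R}^2 C$) and preserve canonical lifts, hence must coincide by the iterated form of Lemma \ref{univ_prop_monad_unit}. Completability, namely the identity $\hat{\mu} \circ (\kappa_n \kappa_n) = \kappa_n \circ \hat{\mu}_n$, is one final instance of the same principle: both sides are $\hat{R}_n$-algebra extensions of the identity on $\hat{R}_n C$ which send the same canonical lifts to the same targets, courtesy of the defining property $\hat{\mu}_C \circ \kappa_{n,\hat{R}C} = \alpha_n^C$.

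The main obstacle will be the bookkeeping in the first two steps: rigorously confirming that the canonical lifts $\delta^{\hat{R}C}_{f,g}$ are independent of the filtered factorization chosen, and that at each stage of the iteration the uniqueness clause of Lemma \ref{univ_prop_monad_unit} leaves no room for an alternative choice of $\alpha_n^C$. This will hinge on tracking how admissible pairs and their lifts propagate through the chain of $\hat{R}_n C$ one dimension and one stage at a time, using the fully-faithful-below-dimension property of the units at every level.
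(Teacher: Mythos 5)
Your proposal is correct and follows essentially the same route as the paper: build $\hat{\mu}$ stage by stage as the unique extensions $\hat{R}_n\hat{R} \to \hat{R}$ of the identity sending formal lifts to the lifts already chosen in $\hat{R}C$ (via Lemma \ref{univ_prop_monad_unit}), pass to the colimit, and deduce the unit, associativity, and completability identities from the uniqueness clause of that universal property applied inductively. Your first step, justifying via sequential smallness and the fully-faithful-below-dimension property that $\hat{R}C$ carries well-defined canonical lifts in every dimension, is a detail the paper leaves implicit, and is a welcome addition rather than a deviation.
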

\begin{proof}
	We shall assign a multiplication $\hat{\mu}:\hat{R}^2\Rightarrow\hat{R}$. Define $\hat{\mu}_0:\hat{R}\to\hat{R}$ to be $\hat{\mu}_0=\id_{\hat{R}}$. Suppose $C$ is a theory and that 
	\[
		\begin{tikzpicture}[node distance=2cm]
		\node (A) {$\vec{p}$};
			\node (B)[right of=A]{$0$};
	\draw[transform canvas={yshift=0.3ex},->] (A) to node[above=3] {$f$} (B);
			\draw[transform canvas={yshift=-0.3ex},->,swap](A) to node[below=3] {$g$} (B);
	\end{tikzpicture}
	\]
	is an admissible pair in $\hat{R}(C)$. There is a unique map $\hat{\mu}_1:R_0\hat{R}\to\hat{R}$ where we send the choice of lift to the admissible pair above in $R_0(\hat{R}(C))$ added along $\eta^0_{\hat{R}C}$ to the choice of lift in $\hat{R}(C)$ and such that the diagram
	\[
	\begin{tikzpicture}[node distance=3cm]
		\node (A) {$\hat{R}(C)$};
		\node (B)[below of=A]{$R_0(\hat{R}(C))$};
		\node (E) [right of=A,node distance=2cm]{$\hat{R}(C)$};
		\draw[->,swap](A) to node[black,left=3] {$\eta^0_{\hat{R}(C)}$} (B);
		\draw[->] (A) to node[black, above=3]{$\id_{\hat{R}(C)}$} (E);
		\draw[->,dashed] (B) to node[right=5] {$(\hat{\mu}_1)_C$} (E);
	\end{tikzpicture}
	\]
commutes by the universal property of Lemma \ref{univ_prop_monad_unit}. Moreover, $\hat{\mu}_1$ assembles into a natural transformation. We repeat what we just did inductively to obtain a diagram
	\[
	\begin{tikzpicture}[node distance=2cm]
		\node (A) {$\hat{R}$};
		\node (B)[below of=A]{$R_0\hat{R}$};
		\node (C)[below of=B]{$R_1R_0\hat{R}$};
		\node (D) [below of=C]{$\vdots$};
		\node (E) [right of=A,node distance=2cm]{$\hat{R}$};
		\draw[->,swap](A) to node[black,left=3] {$\eta^0_{\hat{R}}$} (B);
		\draw[->,swap](B) to node[black,left=3] {$\eta^1_{R_0\hat{R}}$} (C);
		\draw[->,swap](C) to node[black, left=3] {$\eta^2_{R_1R_0\hat{R}}$} (D);
		\draw[->] (A) to node[black, above=3]{$1_{\hat{R}}$} (E);
		\draw[->,dashed] (B) to node[right=3] {$\hat{\mu}_1$} (E);
		\draw[->,dashed,bend right=30] (C) to node[right=3] {$\hat{\mu}_2$} (E);
	\end{tikzpicture}
	\]
	of natural transformations. Upon taking colimits, we induce a unique natural transformation $\hat{\mu}:\hat{R}^2\Rightarrow\hat{R}$ such that the diagram
	\[
	\begin{tikzpicture}[node distance=2cm]
		\node (A) {$\hat{R}$};
		\node (B)[below of=A]{$\hat{R}^2$};
		\node (E) [right of=A,node distance=2cm]{$\hat{R}$};
		\draw[->,swap](A) to node[black,left=3] {$\hat{\eta}_{\hat{R}}$} (B);
		\draw[->] (A) to node[black, above=3]{$1_{\hat{R}}$} (E);
		\draw[->,dashed] (B) to node[right=5] {$\hat{\mu}$} (E);
	\end{tikzpicture}
	\]
	commutes. Notice that
	\[
	\hat{\mu}\circ \hat{\eta}_{\hat{R}}\circ \hat{\eta}=\hat{\mu}\circ \hat{R}\hat{\eta}\circ \hat{\eta}.
	\]
	By construction and inductive use of the universal property of Lemma \ref{univ_prop_monad_unit}, we must have that 
	\[
	\hat{\mu}\circ \hat{R}\hat{\eta}=\hat{\mu}\circ \hat{\eta}_{\hat{R}}=1_{\hat{R}}.
	\]
Similarly, the equations
	\[
	\hat{\mu}\circ \hat{R}\hat{\mu}\circ \hat{\eta}_{\hat{R}^2}=\hat{\mu}\circ \hat{\eta}_{\hat{R}}\circ \hat{\mu}
	\]
	\[
	=\hat{\mu}=\hat{\mu}\circ \hat{\mu}_{\hat{R}}\circ \hat{\eta}_{\hat{R}^2}
	\]
are forced to hold for the same reason.  By construction and inductive use of the universal property (see Lemma \ref{univ_prop_monad_unit}), we have that 
	\[
	\hat{\mu}\circ \hat{R}\hat{\mu}=
	\hat{\mu}\circ \hat{\mu}_{\hat{R}}.
	\]
	Therefore $(\hat{R},\hat{\eta},\hat{\mu})$ is a monad. 

Let $n\geq 0$. Recall the $n$-tail $\kappa_n$ of \ref{n-tail}. Notice that
\[
\hat{\mu}\circ \kappa_n\kappa_n\circ \hat{\eta}^n_{\hat{R}_n}=\hat{\mu}\circ \hat{R}\kappa_n\circ (\kappa_n)_{\hat{R}_n}\circ\hat{\eta}^n_{\hat{R}_n}
\]
\[
=\hat{\mu}\circ (\kappa_n)_{\hat{R}}\circ \hat{R}_n\kappa_n\circ\hat{\eta}^n_{\hat{R}_n}=\hat{\mu}\circ (\kappa_n)_{\hat{R}}\circ \hat{\eta}^n_{\hat{R}}\circ\kappa_n
\]
\[
=\hat{\mu}\circ\hat{\eta}_{\hat{R}}\circ\kappa_n=\kappa_n=\kappa_n\circ\hat{\mu}^n\circ \hat{\eta}^n_{\hat{R}_n}
\]

By inductive use of the universal property (see Lemma \ref{univ_prop_monad_unit}), we have that the diagram
	\[
\begin{tikzpicture}[node distance=2cm,auto]
	\node (A) {$\hat{R}_n^2$};
	\node (B)[right of=A] {$\hat{R}^2$};
	\node(C)[below of=A] {$\hat{R}_n$};
	\node (D)[right of=C] {$\hat{R}$};
	\draw[->] (A) to node {$\kappa_n\kappa_n$}(B);
	\draw[->,swap] (A) to node{$\hat{\mu}^n$}(C);
	\draw[->] (B) to node{$\hat{\mu}$}(D);
	\draw[->,swap] (C) to node {$\kappa_n$}(D);
\end{tikzpicture}
\]
commutes. Therefore the distributive series is completable as required.
\end{proof}

\begin{remark}
We made mention of the inductive use of the universal property. We can do this because $\hat{\eta}$ is the colimit of the diagram
\[
1_{\Th_{\Theta_0^\op}}\xrightarrow{\eta^0} R_0\xrightarrow{\eta^1_{R_0}} R_1R_0\xrightarrow{\eta^2_{R_1R_0}}\cdots
\]
in the category of endofunctors on $\Th_{\Theta_0^\op}$ and because we argue dimension-by-dimension to show that both ways of sending the appropriate choice of lift added around the required diagram are in fact the same. 
\end{remark}
 
\begin{definition}\label{our_sp_choice}
	The \emph{inductive $(\infty,0)$-coherator} 
	\[
	J^{IC}:\Theta_0^\op\to IC
	\]
	is defined to be
	\[
	J^{SC}:=\widehat{R}(\id_{\Theta_0^\op}).
	\]
\end{definition}

\begin{theorem} \label{our_sp_choice_proof}
The theory $J^{IC}:\Theta_0^\op\to IC$ of Definition \ref{our_sp_choice} is an $(\infty,0)$-coherator.
\end{theorem}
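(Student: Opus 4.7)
My plan is to verify directly the two clauses of Definition \ref{eq:8}.

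For the inductive presentation, I would set $C_0 := \Theta_0^\op$ and $C_{n+1} := R_n(C_n) = \hat{R}_n(\id_{\Theta_0^\op})$ for $n \ge 0$. By Definitions \ref{ind_funct_on_dist} and \ref{our_sp_choice}, the theory $IC$ is the sequential colimit of $C_0 \to C_1 \to C_2 \to \cdots$ in $\Th_{\Theta_0^\op}$, with transition $C_n \to C_{n+1}$ equal to the unit $\eta^n_{C_n}$. Taking $U_n$ to be the set of all admissible pairs of dimension $n$ in $C_n$, Lemma \ref{univ_prop_monad_unit} identifies $\eta^n_{C_n}: C_n \to R_n(C_n)$ with the free formal adjunction of a lift to each pair in $U_n$: its universal property states precisely that any theory map $F: C_n \to D$ together with a choice of lift in $D$ for the $F$-image of every pair in $U_n$ extends uniquely to $C_{n+1} \to D$. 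This matches the inductive clause of the definition.

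For contractibility, let $(f, g)$ be an admissible pair of dimension $k$ in $IC$, with source $X$ a globular sum of height $\le k+1$. Since $X$ is sequentially small by Lemma \ref{admiss_1} and $IC$ is a sequential (hence filtered) colimit, both $f$ and $g$ descend to a common stage $C_n$. I would then invoke the lemma that each $\eta^m_C: C \to R_m(C)$ is fully faithful below dimension $m+1$: for every $m \ge k$ this yields a bijection on the hom-sets $C_m(X, k) \to C_{m+1}(X, k)$, and faithfulness of the units preserves and reflects the parallelism condition $s \circ f = s \circ g$, $t \circ f = t \circ g$. Composing these bijections for $m = k, k+1, \ldots$, every admissible pair of dimension $k$ in $C_n$ (with $n \ge k$) corresponds uniquely to an admissible pair of dimension $k$ in $C_k$. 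At the step $C_k \to C_{k+1} = R_k(C_k)$, the monad $R_k$ formally adjoins a lift $\delta_{f, g}: X \to k+1$ for this pair, and its image in $IC$ under the colimit structure map $C_{k+1} \to IC$ supplies the required lift for $(f, g)$.

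The main obstacle is the bookkeeping in the contractibility argument: I must verify that the full-faithfulness-below-dimension property transports both the relevant hom-sets and the parallelism condition cleanly along the entire tail $C_k \to C_{k+1} \to C_{k+2} \to \cdots$, so that every admissible pair in $IC$ traces uniquely back to the stage at which its lift was freely adjoined. Once the hom-set bijections and the reflection of parallelism are established, combining the two clauses concludes that $J^{IC}: \Theta_0^\op \to IC$ is an $(\infty,0)$-coherator.
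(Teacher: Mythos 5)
Your proof is correct and takes the same route as the paper, whose own proof is just the assertion that the result holds \emph{by construction}: your identification of $C_{n+1}=R_n(C_n)$ with the formal adjunction of lifts via the universal property of Lemma \ref{univ_prop_monad_unit}, together with the descent-to-a-finite-stage and full-faithfulness argument for contractibility, is exactly the unfolding of that construction. The one point to watch in your bookkeeping is that the full-faithfulness lemma as stated gives a bijection on $C(\vec{p},i)$ only when $\hgt(\vec{p})\le i+1$, whereas reflecting the parallelism condition of a dimension-$k$ pair back to stage $C_k$ requires injectivity on $C(\vec{p},k-1)$ for $\hgt(\vec{p})\le k+1$, so you need a slightly stronger (but still true) injectivity statement than the one literally recorded in the paper.
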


\begin{proof}
This is an $(\infty,0)$-coherator by construction.
\end{proof}

\begin{remark}
The construction of this coherator is suggested to exist informally in the paragraph following Definition 2.5 of \cite{HenLan}.
\end{remark}

\section{Strict Infinity Groupoids}\label{th_for_st_inf_grpds_const}
We finish the main exposition of this paper by showing that a theory for strict $\infty$-groupoids can be obtained in the same way we obtained the inductive coherator. The only difference being that we apply Kelly's Small Object Argument to obtain strong factorization systems. We reference proofs there for proofs here to avoid repetitiveness. 
\begin{definition}
We say that a globular theory $J:\Theta_0^\op\to C$ is \emph{uniquely contractible} if every admissible pair has a unique lift.
\end{definition}
\begin{remark}
Ara calls this canonically contractible in 2.9 of \cite{Dim4}.
\end{remark}
\begin{notation} \label{fibr_repl_OFS}
Let $(S_k,\eta^k,\mu^k)$ be the fibrant replacement monad corresponding to the strong factorization system obtained using Kelly's Small Object Argument (see Theorem \ref{Kelly_SOA}) to the set of maps $I_k$ for all $k\geq 0$.
\end{notation}

\begin{lemma}\label{univ_prop_monad_unit_st}
	Given a theory $C$, a map $F:C\to D$ of globular theories, and given a unique choice of lift $\delta_{Ff,Fg}:\vec{p}\to k+1$ in $D$ for the image under $F$ of every admissible pair of the form
\[
		\begin{tikzpicture}[node distance=2cm]
			\node (A) {$\vec{p}$};
			\node (B)[right of=A]{$k$};
			\draw[transform canvas={yshift=0.3ex},->] (A) to node[above=3] {$f$} (B);
			\draw[transform canvas={yshift=-0.3ex},->,swap](A) to node[below=3] {$g$} (B);
		\end{tikzpicture}
		\]
	in $C$, there is a unique map
	\[
	F':S_kC\to D
	\]
	such that
	\[
	F'\circ\eta^k_C=F
	\]
	and
	\[
	F'(\delta_{f,g})=\delta_{Ff,Fg}
	\]
	for every admissible pair of the form
\[
		\begin{tikzpicture}[node distance=2cm]
			\node (A) {$\vec{p}$};
			\node (B)[right of=A]{$k$};
			\draw[transform canvas={yshift=0.3ex},->] (A) to node[above=3] {$f$} (B);
			\draw[transform canvas={yshift=-0.3ex},->,swap](A) to node[below=3] {$g$} (B);
		\end{tikzpicture}
		\]
	in $C$.
\end{lemma}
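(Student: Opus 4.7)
The plan is to mirror the strategy used for Lemma \ref{univ_prop_monad_unit}, but to invoke the universal property of Kelly's Small Object Argument (Lemma \ref{Uni_prop_of_Kell's_soa}) in place of the universal property of the AWFS factorization (Lemma \ref{univ_prop_of_AWFS_fact}). Since $(S_k,\eta^k,\mu^k)$ is the fibrant replacement monad associated to the strong factorization system generated by $I_k$ via Theorem \ref{Kelly_SOA}, the object $S_k C$ is constructed as the codomain of the left factor $\overline{L}$ applied to the canonical arrow out of $C$, and $\eta^k_C : C \to S_k C$ is this left factor.

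First I would re-interpret the data of the lemma as a factorization of $F$ through a candidate object. The generating set $I_k$ consists precisely of the inclusions $j_{\vec{p},k} : S_{\vec{p},k}\hookrightarrow D_{\vec{p},k}$ of Notation \ref{incl_of_sphere_into_disks}, so that squares against elements of $I_k$ correspond bijectively to admissible pairs in the target theory together with a choice of lift. Thus giving a map $F : C \to D$ together with, for each admissible pair in $C$, a chosen lift $\delta_{Ff,Fg}$ in $D$ for its image is precisely the data of a factorization of the appropriate arrow satisfying the hypotheses of Remark \ref{Remark_for_KSOA}. Applying Lemma \ref{Uni_prop_of_Kell's_soa} then produces a unique map $F' : S_k C \to D$ fitting into the required triangle with $\eta^k_C$ and compatible with the chosen lifts, which upon unwinding the definitions reads as $F' \circ \eta^k_C = F$ and $F'(\delta_{f,g}) = \delta_{Ff,Fg}$.

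The only point requiring care is the translation between the categorical formulation of Lemma \ref{Uni_prop_of_Kell's_soa} (which speaks in terms of lifts to lifting squares against elements of $I_k$) and the theory-level formulation here (which speaks in terms of extensions preserving chosen lifts of admissible pairs); this is where the hypothesis of unique choice of lift in $D$ becomes essential, since uniqueness of solutions in the strong factorization system forces any extension to act on the freely adjoined $\delta_{f,g}$ in the prescribed way. I expect no substantive obstacle beyond this bookkeeping, and the proof body can reasonably be dispatched with a reference to Lemma \ref{Uni_prop_of_Kell's_soa}, paralleling the treatment of the AWFS version in Lemma \ref{univ_prop_monad_unit}.
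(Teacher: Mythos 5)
Your proposal is correct and matches the paper's proof, which dispatches the lemma in one line as a reinterpretation of Lemma \ref{Uni_prop_of_Kell's_soa} in this setting. Your additional bookkeeping — identifying squares against $I_k$ with admissible pairs plus chosen lifts, and noting where uniqueness of solutions in the strong factorization system enters — is exactly the unwinding the paper leaves implicit.
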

\begin{proof}
This is a reinterpretation of Lemma \ref{Uni_prop_of_Kell's_soa} in this setting.
\end{proof}

\begin{definition} \label{distlaw_comp_2}
	Given a globular theory $C$, we define a map
	\[
	\kappa^{i,j}_C: S_iS_jC\Rightarrow S_jS_iC
	\]
	
	for $0\leq i<j$ as follows:
	\begin{itemize}
		\item we consider the fork 
		\[
		\begin{tikzpicture}[node distance=2cm,auto]
			\node (A) {$S_jC$};
			\node (B)[right of=A]{$S_jS_iC$};
			\node (C)[below of=A]{$S_iS_jC$};
			\draw[->,swap] (A) to node {$\eta^i_{S_jC}$} (C);
			\draw[->](A) to node {$S_j\eta^i_C$}(B);
		\end{tikzpicture}
		\]
		\item every admissible pair of the form 
		\[
		\begin{tikzpicture}[node distance=2cm]
			\node (A) {$\vec{p}$};
			\node (B)[right of=A]{$i$};
			\draw[transform canvas={yshift=0.3ex},->] (A) to node[above=3] {$f$} (B);
			\draw[transform canvas={yshift=-0.3ex},->,swap](A) to node[below=3] {$g$} (B);
		\end{tikzpicture}
		\]
		in $R_jC$ induces an admissible pair with the same name in $R_jR_iC$ and has a unique choice of lift in $S_jS_iC$
		
		\item let $\kappa^{i,j}_C:S_iS_jC\to S_jS_iC$ be the unique map such that the diagram
		\begin{equation}
			\begin{tikzpicture}[node distance=2cm,auto]
				\node (A) {$S_jC$};
				\node (B)[right of=A]{$S_jS_iC$};
				\node (C)[below of=A]{$S_iS_jC$};
				\draw[->,swap] (A) to node {$\eta^i_{S_jC}$} (C);
				\draw[->](A) to node {$S_j\eta^i_C$}(B);
				\draw[->,swap] (C) to node {$\kappa^{i,j}_C$}(B);
			\end{tikzpicture}
		\end{equation}
		commutes by sending the unique choice of lift added along $\eta^i_{S_jC}$ of the admissible pair 
\[
		\begin{tikzpicture}[node distance=2cm]
			\node (A) {$\vec{p}$};
			\node (B)[right of=A]{$i$};
			\draw[transform canvas={yshift=0.3ex},->] (A) to node[above=3] {$f$} (B);
			\draw[transform canvas={yshift=-0.3ex},->,swap](A) to node[below=3] {$g$} (B);
		\end{tikzpicture}
		\]
		to the unique choice of lift in $S_jS_iC$.
	\end{itemize}
\end{definition}

\begin{lemma}\label{distr_tringle_3}
	For $0\leq i<j$ and every theory $C$, the triangle 
	\[
	\begin{tikzpicture}[node distance=2cm]
		\node (B){$S_iC$};
		\node (C)[below of=B]{};
		\node (D)[left of=C]{$S_iS_jC$};
		\node (E)[right of=C]{$S_jS_iC$};
		\draw[->](B) to node[left=3]{$S_i\eta^j_C$}(D);
		\draw[->](B) to node[right=3]{$\eta^j_{S_iC}$}(E);
		\draw[->](D) to node[below=3]{$\kappa^{i,j}_C$}(E);
	\end{tikzpicture}
	\]
	commutes.
\end{lemma}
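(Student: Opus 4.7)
The plan is to imitate the proof of Lemma \ref{distr_tringle_2} verbatim, substituting $S_k$ for $R_k$ and $\kappa^{i,j}$ for $\lambda^{i,j}$ throughout, and invoking the strict universal property of Lemma \ref{univ_prop_monad_unit_st} in place of Lemma \ref{univ_prop_monad_unit}. Concretely, I would consider the enlarged diagram with vertices $C, S_iC, S_jC, S_iS_jC, S_jS_iC$ and show that its two interior pieces commute: the square
\[
\eta^j_{S_iC}\circ\eta^i_C=S_j\eta^i_C\circ\eta^j_C
\]
commutes by naturality of $\eta^j$ at $\eta^i_C$; the analogous square
\[
\eta^i_{S_jC}\circ\eta^j_C=S_i\eta^j_C\circ\eta^i_C
\]
commutes by naturality of $\eta^i$ at $\eta^j_C$; and the defining triangle of $\kappa^{i,j}_C$ from Definition \ref{distlaw_comp_2} gives $\kappa^{i,j}_C\circ\eta^i_{S_jC}=S_j\eta^i_C$. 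Chasing these three identities yields
\[
\eta^j_{S_iC}\circ\eta^i_C=\kappa^{i,j}_C\circ S_i\eta^j_C\circ\eta^i_C,
\]
which is the first of the two verifications required by Lemma \ref{univ_prop_monad_unit_st}.

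Next, I would verify agreement on the chosen lifts. Let $(f,g)\colon\vec{p}\rightrightarrows i$ be an admissible pair in $C$, and write $\delta_{f,g}\in S_iC$ for the unique lift added along $\eta^i_C$. Applying the functorial action of $S_i$ on $\eta^j_C$, the map $S_i\eta^j_C$ carries $\delta_{f,g}$ to the unique lift in $S_iS_jC$ (added along $\eta^i_{S_jC}$) of the admissible pair $(\eta^j_C f,\eta^j_C g)$ in $S_jC$; by the prescription of Definition \ref{distlaw_comp_2}, $\kappa^{i,j}_C$ sends this lift to the unique lift of $(S_j\eta^i_C\circ\eta^j_C f,S_j\eta^i_C\circ\eta^j_C g)=(\eta^j_{S_iC}\circ\eta^i_C f,\eta^j_{S_iC}\circ\eta^i_C g)$ in $S_jS_iC$. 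On the other hand, since $\eta^j_{S_iC}$ is a map of globular theories, $\eta^j_{S_iC}(\delta_{f,g})$ is also a lift of this same admissible pair in $S_jS_iC$. Uniqueness of lifts in the strict setting forces the two elements to coincide.

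Combining the two bullets, Lemma \ref{univ_prop_monad_unit_st} applied to $S_iC=S_i(C)$ supplies the desired equality $\eta^j_{S_iC}=\kappa^{i,j}_C\circ S_i\eta^j_C$ of maps $S_iC\to S_jS_iC$, which is precisely the commutativity of the triangle. I do not expect any real obstacle: the only nuance is that one must invoke uniqueness of lifts in $S_jS_iC$ in the second verification, but this is automatic in the strict setting and is built into the universal property Lemma \ref{univ_prop_monad_unit_st}.
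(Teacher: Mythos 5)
Your proposal is correct and matches the paper's intent exactly: the paper's proof of this lemma is literally ``Repeat argument from Lemma \ref{distr_tringle_2},'' and your write-up is a faithful (indeed more detailed) transcription of that argument with $S_k$, $\kappa^{i,j}$, and the strict universal property of Lemma \ref{univ_prop_monad_unit_st} in place of $R_k$, $\lambda^{i,j}$, and Lemma \ref{univ_prop_monad_unit}. The naturality chase plus the check on chosen lifts is the same decomposition the paper uses.
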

\begin{proof}
Repeat argument from Lemma \ref{distr_tringle_2}.
\end{proof}

\begin{lemma}\label{dist_law_ij_2}
	The maps $\kappa^{i,j}_C:S_iS_jC\to S_jS_iC$ of \ref{distlaw_comp_2} form a natural transformation
	\[
	\kappa^{i,j}: S_iS_j\Rightarrow S_jS_i
	\]
	for $0\leq i<j$.
\end{lemma}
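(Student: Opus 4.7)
The plan is to run the proof of Lemma \ref{dist_law_ij} essentially verbatim, with each appeal to the universal property of Lemma \ref{univ_prop_monad_unit} replaced by the corresponding appeal to Lemma \ref{univ_prop_monad_unit_st}, and with Lemma \ref{distr_tringle_3} used in place of Lemma \ref{distr_tringle_2}. Fix $0 \leq i < j$ and a morphism $F : C \to D$ of globular theories; the aim is to show that the naturality square
\[
\begin{tikzpicture}[node distance=2cm,auto]
    \node (A) {$S_iS_jC$};
    \node (B)[right of=A] {$S_jS_iC$};
    \node (E)[below of=A] {$S_iS_jD$};
    \node (G)[right of=E] {$S_jS_iD$};
    \draw[->] (A) to node {$\kappa^{i,j}_C$} (B);
    \draw[->,swap] (A) to node {$S_iS_jF$} (E);
    \draw[->] (B) to node {$S_jS_iF$} (G);
    \draw[->,swap] (E) to node {$\kappa^{i,j}_D$} (G);
\end{tikzpicture}
\]
commutes.

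First I would chase the same string of equalities used for $\lambda^{i,j}$, obtaining
\[
S_jS_iF \circ \kappa^{i,j}_C \circ \eta^i_{S_jC} \circ \eta^j_C = \kappa^{i,j}_D \circ S_iS_jF \circ \eta^i_{S_jC} \circ \eta^j_C,
\]
using Lemma \ref{distr_tringle_3} together with naturality of each unit $\eta^i$ and $\eta^j$. Then I would verify that both composites agree on chosen lifts. Given any admissible pair $f,g$ in $S_jC$, its image $(Ff,Fg)$ is an admissible pair in $S_jD$, and by the defining property of $\kappa^{i,j}$ the lift added along $\eta^i_{S_jC}$ is sent to the unique chosen lift in $S_jS_iC$; uniqueness of chosen lifts in $S_jS_iD$ (provided by Theorem \ref{Kelly_SOA}) then forces the two composites to agree on these elements as well.

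Applying Lemma \ref{univ_prop_monad_unit_st} relative to the monad $S_i$ collapses the outer $S_i$ in the source, giving equality of the two composites precomposed with $\eta^j_{S_iC}$ only. A second application of the same universal property, this time relative to $S_j$, yields the desired equality of maps $S_iS_jC \to S_jS_iD$.

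I do not anticipate any genuine obstacle here: the argument is a formal transcription of the one given for $\lambda^{i,j}$, and the uniqueness of lifts in the strict setting actually \emph{simplifies} matters, since agreement on chosen lifts follows automatically from uniqueness in the codomains $S_kS_\ell D$ rather than requiring a separate verification. The only care needed is ensuring that each time the universal property is invoked, the target theory really does carry the relevant (unique) chosen lifts, which is guaranteed because the targets in question are themselves of the form $S_kE$ for appropriate $k$ and $E$.
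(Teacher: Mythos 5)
Your proposal is correct and takes essentially the same route as the paper, whose proof is literally ``repeat the argument of Lemma \ref{dist_law_ij}'' with Lemma \ref{univ_prop_monad_unit_st} replacing Lemma \ref{univ_prop_monad_unit} and Lemma \ref{distr_tringle_3} replacing Lemma \ref{distr_tringle_2}; your remark that uniqueness of lifts makes the verification on chosen lifts automatic is a valid simplification in the strict setting. One small slip in the write-up: the two applications of the universal property go in the opposite order --- first relative to $S_j$ at $C$, stripping $\eta^j_C$ and yielding agreement of the composites after precomposition with $\eta^i_{S_jC}$ alone, then relative to $S_i$ at $S_jC$ --- and the unit you name, $\eta^j_{S_iC}$, maps into the target $S_jS_iC$ rather than the source $S_iS_jC$, so it should read $\eta^i_{S_jC}$.
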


\begin{proof}
Repeat argument from Lemma \ref{dist_law_ij}.
\end{proof}

\begin{lemma}\label{distr_ser_fibr_repl_2}
	For $0\leq i<j$, $\kappa^{i,j}:S_iS_j\Rightarrow S_jS_i$ is a distributive law.	
\end{lemma}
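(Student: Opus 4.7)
The plan is to mimic the proof of Lemma \ref{distr_ser_fibr_repl} almost verbatim, replacing each appeal to the universal property of Lemma \ref{univ_prop_monad_unit} with the corresponding appeal to Lemma \ref{univ_prop_monad_unit_st}. There are two distributive law axioms to check: the compatibility of $\kappa^{i,j}$ with the units and multiplications of $S_i$ and $S_j$. The compatibility with units splits into two triangles; one is built into Definition \ref{distlaw_comp_2} by construction, and the other is exactly Lemma \ref{distr_tringle_3}. So the substantive content reduces to verifying the two pentagons involving $\mu^i$ and $\mu^j$.

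First I would show that the diagram
\[
\begin{tikzpicture}[node distance=2cm]
    \node (E){$S_i^2S_j$};
    \node (F)[right of=E]{$S_iS_j$};
    \node (G)[right of=F]{$S_jS_i$};
    \node (H)[below of=E]{$S_iS_jS_i$};
    \node (I)[node distance=4cm,right of=H,swap]{$S_jS_i^2$};
    \draw[->](E) to node[above=3]{$\mu^i_{S_j}$}(F);
    \draw[->](F) to node[above=3]{$\kappa^{i,j}$}(G);
    \draw[->,swap] (E) to node[left=3]{$S_i\kappa^{i,j}$}(H);
    \draw[->,swap] (H) to node[below=3]{$\kappa^{i,j}_{S_i}$}(I);
    \draw[->](I) to node[right=3]{$S_j\mu^i$}(G);
\end{tikzpicture}
\]
commutes. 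As in Lemma \ref{distr_ser_fibr_repl}, I would precompose both branches with $\eta^i_{S_iS_j}\circ\eta^i_{S_j}$ and run a diagram chase through the unit-compatibility triangles (Definition \ref{distlaw_comp_2} and Lemma \ref{distr_tringle_3}), the monad unit law $\mu^i\circ\eta^i_{S_i}=1$, and naturality of $\kappa^{i,j}$, to see that both composites agree after precomposition with $\eta^i_{S_iS_j}$. On admissible pairs added along $\eta^i$ at the relevant stage, both composites send the chosen lift to the chosen lift in $S_jS_i$, since $\kappa^{i,j}$, its whiskerings, and $S_j\mu^i$ are all defined to preserve the canonical choice. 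Lemma \ref{univ_prop_monad_unit_st} then forces the two maps to coincide.

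The second pentagon, involving $\mu^j$, follows by a symmetric argument: precompose with $\eta^i_{S_j^2}$, chase through the unit triangles and naturality, and appeal again to Lemma \ref{univ_prop_monad_unit_st}. The main (and only) obstacle is bookkeeping rather than conceptual: one must be careful that at every stage where a choice of lift is invoked, uniqueness in the strong factorisation system (Theorem \ref{Kelly_SOA}) is what permits the universal property of Lemma \ref{univ_prop_monad_unit_st} to conclude equality from agreement on units and on chosen lifts. Since every $S_k$ is obtained from an OFS where solutions to lifting problems are unique, this bookkeeping is automatic, and the proof closes exactly as in Lemma \ref{distr_ser_fibr_repl}.
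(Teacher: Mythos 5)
Your proposal is correct and follows the same route the paper takes: the paper's proof of this lemma is literally ``repeat the argument of Lemma \ref{distr_ser_fibr_repl},'' with the universal property of Lemma \ref{univ_prop_monad_unit} replaced by that of Lemma \ref{univ_prop_monad_unit_st}, which is exactly what you spell out. Your added remark that uniqueness of lifts in the strong factorization system makes the bookkeeping automatic is a fair (and slightly more explicit) gloss on why the transfer works.
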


\begin{proof}
Repeat argument from Lemma \ref{distr_ser_fibr_repl}.
\end{proof}

\begin{lemma}\label{yang_baxt_gpds_2}
	The Yang-Baxter equation 
	
	\[
	\begin{tikzpicture}[node distance=1.5cm]
		\node (A){$S_iS_jS_k$};
		\node (B)[right of=A]{};
		\node (C)[above of =B]{$S_jS_iS_k$};
		\node (D)[below of =B]{$S_iS_kS_j$};
		\node (E)[right of=C,node distance=3cm]{$S_jS_kS_i$};
		\node (F)[right of=D,node distance=3cm]{$S_kS_iS_j$};
		\node (G)[right of=B]{};
		\node (H)[right of =G,node distance=3cm]{$S_kS_jS_i$};
		\draw[->](A) to node[left=3]{$\kappa^{i,j}S_k$}(C);
		\draw[->,swap](A) to node[left=3]{$S_i\kappa^{j,k}$}(D);
		\draw[->](C)to node[above=3]{$S_j\kappa^{i,k}$}(E);
		\draw[->,swap](D) to node[below=3]{$\kappa^{i,k}S_j$}(F);
		\draw[->](E) to node[right=3]{$\kappa^{j,k}S_i$}(H);
		\draw[->,swap](F) to node[right=3]{$S_k\kappa^{i,j}$}(H);
	\end{tikzpicture}
	\]
	
	is satisfied for $0\leq i<j<k$.
\end{lemma}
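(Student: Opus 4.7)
The plan is to mirror the proof of Lemma \ref{yang_baxt_gpds} verbatim, substituting $S_i$ for $R_i$ and $\kappa^{i,j}$ for $\lambda^{i,j}$ throughout, and invoking the strict-case universal property (Lemma \ref{univ_prop_monad_unit_st}) in place of Lemma \ref{univ_prop_monad_unit}. The structural reason this transfers is that all the ingredients used by the weak proof have exact analogues here: naturality of each $\kappa^{i,j}$ (Lemma \ref{dist_law_ij_2}), the triangle identity (Lemma \ref{distr_tringle_3}) asserting $\kappa^{i,j}_C\circ S_i\eta^j_C = \eta^j_{S_iC}$, and the defining diagram of $\kappa^{i,j}_C$ from Definition \ref{distlaw_comp_2} giving $\kappa^{i,j}_C\circ \eta^i_{S_jC} = S_j\eta^i_C$.

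First, I would precompose each of the two composites around the hexagon with $\eta^i_{S_jS_k}$ and simplify. On one side,
\[
\kappa^{j,k}_{S_i}\circ S_j\kappa^{i,k}\circ \kappa^{i,j}_{S_k}\circ \eta^i_{S_jS_k}
= \kappa^{j,k}_{S_i}\circ S_j\bigl(\kappa^{i,k}\circ \eta^i_{S_k}\bigr)
= \kappa^{j,k}_{S_i}\circ S_jS_k\eta^i
= S_kS_j\eta^i\circ \kappa^{j,k},
\]
using the defining diagram for $\kappa^{i,j}_{S_k}$, then for $\kappa^{i,k}$, then naturality of $\kappa^{j,k}$. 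On the other side,
\[
S_k\kappa^{i,j}\circ \kappa^{i,k}_{S_j}\circ S_i\kappa^{j,k}\circ \eta^i_{S_jS_k}
= S_k\kappa^{i,j}\circ \kappa^{i,k}_{S_j}\circ \eta^i_{S_kS_j}\circ \kappa^{j,k}
= S_k\kappa^{i,j}\circ S_k\eta^i_{S_j}\circ \kappa^{j,k},
\]
using naturality of $\eta^i$ and then the defining diagram for $\kappa^{i,k}_{S_j}$. The triangle identity rewrites $S_k\kappa^{i,j}\circ S_k\eta^i_{S_j} = S_k\eta^j_{S_i} = S_kS_j\eta^i$ (after a further application of the defining diagram, arranged as in the weak proof), showing both composites agree after precomposition with $\eta^i_{S_jS_k}$.

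Next, for each admissible pair $\vec p \rightrightarrows i$ in $S_j(S_k(C))$, both composites must send the choice of lift $\delta_{f,g}$ added along $\eta^i$ to a lift of the image admissible pair in $S_kS_jS_i(C)$. Because the factorization systems $(S_k)_{k\geq 0}$ arise from \emph{strong} factorization systems (Theorem \ref{Kelly_SOA}), every admissible pair in the target has a \emph{unique} lift, so both composites are forced to agree on $\delta_{f,g}$ without any further computation. Applying Lemma \ref{univ_prop_monad_unit_st} to the common behaviour on the image of $\eta^i_{S_jS_k}$ and on the added lifts yields equality of the two natural transformations $S_iS_jS_k\Rightarrow S_kS_jS_i$, closing the hexagon.

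The main obstacle will be purely notational: correctly translating the weak-case string of equalities (which freely uses the $\lambda^{i,j}$-triangle in both orientations and two instances of naturality) into the strict setting and confirming that the universal property of Lemma \ref{univ_prop_monad_unit_st} applies at each invocation, since $\eta^i_C: C\to S_iC$ is fully faithful below dimension $i+1$ by the analogous reasoning to the weak case. Given that all the structural lemmas (\ref{distr_tringle_3}, \ref{dist_law_ij_2}, \ref{distr_ser_fibr_repl_2}) have already been established by reference to their weak-case counterparts, no new calculational content is required beyond what appears in Lemma \ref{yang_baxt_gpds}.
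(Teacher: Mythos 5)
Your proposal matches the paper's proof, which is literally ``repeat the argument of Lemma \ref{yang_baxt_gpds}'' with $S$ in place of $R$, $\kappa$ in place of $\lambda$, and Lemma \ref{univ_prop_monad_unit_st} in place of Lemma \ref{univ_prop_monad_unit}; your added observation that uniqueness of lifts in the strict setting makes the agreement on the adjoined $\delta_{f,g}$ automatic is a valid simplification. One small slip: the intermediate term $S_k\eta^j_{S_i}$ in your final rewriting should not appear --- the identity actually used there is the defining square $\kappa^{i,j}\circ\eta^i_{S_j}=S_j\eta^i$ whiskered by $S_k$, not the triangle of Lemma \ref{distr_tringle_3} --- but the endpoints of that chain are correct, so the argument stands.
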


\begin{proof}
Repeat argument from Lemma \ref{yang_baxt_gpds}.
\end{proof}

\begin{theorem}\label{distr_ser_OFS}
	The fibrant replacement monads of Notation \ref{fibr_repl_OFS} may be upgraded to form a distributive series
	\[
	S:=((S_{k})_{k=0}^\infty,(\kappa^{i,j})_{j>i\geq 0})
	\]
\end{theorem}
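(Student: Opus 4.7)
The plan is to assemble this theorem directly from the three preceding lemmas, exactly mirroring the proof of Theorem \ref{distr_ser_AWFS}. By definition, a distributive series of monads consists of an $\mathbb{N}$-indexed family of monads equipped with a distributive law between each ordered pair such that the Yang--Baxter hexagon commutes for every triple. The monads $(S_k, \eta^k, \mu^k)$ are given by Notation \ref{fibr_repl_OFS}, and the natural transformations $\kappa^{i,j}$ have been constructed in Definition \ref{distlaw_comp_2}, so the only content is to verify the three axioms.

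First I would invoke Lemma \ref{dist_law_ij_2} to confirm that each family $\kappa^{i,j}_C$ assembles into a natural transformation $\kappa^{i,j}: S_i S_j \Rightarrow S_j S_i$, which is needed even to make sense of the distributive law and hexagon diagrams. Next, I would cite Lemma \ref{distr_ser_fibr_repl_2}, which verifies that each $\kappa^{i,j}$ satisfies Beck's two distributive law axioms (compatibility with units and multiplications of both $S_i$ and $S_j$) in the sense of Definition \ref{mon_over_mon}. Finally, Lemma \ref{yang_baxt_gpds_2} provides the Yang--Baxter hexagon for every triple $0 \leq i < j < k$, which is the last axiom required.

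Packaging these three lemmas together produces the distributive series $S := ((S_k)_{k=0}^\infty, (\kappa^{i,j})_{j > i \geq 0})$, and this is the entire argument. There is no real obstacle here since the work has already been done in the preceding three lemmas; the theorem is simply the organizational statement that the upgraded structure fits the definition of a distributive series in the sense we extended from Cheng. The only judgement call is to remark that the construction of the unit and of the distributive laws goes through identically to the algebraic weak factorization system case because Lemma \ref{univ_prop_monad_unit_st} provides the same universal property for $S_k$ as Lemma \ref{univ_prop_monad_unit} did for $R_k$, so every diagram chase transfers verbatim.
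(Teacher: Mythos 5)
Your proposal is correct and matches the paper's proof, which is literally ``Put the last three lemmas together'': Lemma \ref{dist_law_ij_2} gives naturality of the $\kappa^{i,j}$, Lemma \ref{distr_ser_fibr_repl_2} gives the distributive law axioms, and Lemma \ref{yang_baxt_gpds_2} gives the Yang--Baxter condition. Your closing remark about Lemma \ref{univ_prop_monad_unit_st} playing the role of Lemma \ref{univ_prop_monad_unit} is exactly the justification the paper relies on when it says to repeat the earlier arguments.
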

\begin{proof} 
	Put the last three lemmas together.
\end{proof}

Let $(\hat{S},\hat{\eta})$ be the associated pointed endofunctor of the distributive series of Theorem \ref{distr_ser_OFS} (see Definition \ref{ind_funct_on_dist}). 

\begin{lemma}\label{ind_monad_OFS}
	The structure $(\hat{S},\hat{\eta})$ extends to a monad structure on $\Th_{\Theta_0^\op}$. Moreover, the distributive series is completable. 
\end{lemma}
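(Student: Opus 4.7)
The plan is to repeat the construction from Theorem \ref{ind_monad_AWFS} essentially verbatim, with the strict fibrant replacement monads $S_k$ in place of the $R_k$ and the universal property of Lemma \ref{univ_prop_monad_unit_st} in place of Lemma \ref{univ_prop_monad_unit}. All the structural prerequisites are already in place, namely Lemma \ref{distr_tringle_3} through Theorem \ref{distr_ser_OFS}. In fact the strict setting is slightly easier because choices of lifts are unique rather than merely distinguished, which makes the hypotheses of Lemma \ref{univ_prop_monad_unit_st} correspondingly cheaper to verify.

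First I would build the multiplication $\hat{\mu}:\hat{S}^2\Rightarrow\hat{S}$ inductively. Setting $\hat{\mu}_0 = \id_{\hat{S}}$, for each $n\geq 1$ I would invoke Lemma \ref{univ_prop_monad_unit_st} to produce a unique natural transformation $\hat{\mu}_n:\hat{S}_n\hat{S}\to\hat{S}$ that extends $\hat{\mu}_{n-1}$ along $\eta^{n-1}_{\hat{S}_{n-1}\hat{S}}$ and sends each unique lift added along this unit to the corresponding lift already present in $\hat{S}$. These $\hat{\mu}_n$ assemble into a tower in $\End(\Th_{\Theta_0^\op})$, and its colimit yields $\hat{\mu}$ together with the triangle $\hat{\mu}\circ\hat{\eta}_{\hat{S}}=1_{\hat{S}}$.

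Next I would verify the remaining monad axioms $\hat{\mu}\circ\hat{S}\hat{\eta}=1_{\hat{S}}$ and $\hat{\mu}\circ\hat{S}\hat{\mu}=\hat{\mu}\circ\hat{\mu}_{\hat{S}}$ by the same dimension-by-dimension chase used at the end of Theorem \ref{ind_monad_AWFS}. In each case the two composites agree after precomposition with $\hat{\eta}$ and send every unique lift to the same element, so iterated application of Lemma \ref{univ_prop_monad_unit_st}, together with the colimit universal property that defines $\hat{\mu}$, forces the two composites to be equal.

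Finally, for completability, I would chase around the $\kappa_n$ square exactly as in the closing paragraph of the proof of Theorem \ref{ind_monad_AWFS}, using naturality of $\kappa_n$, the identities $\hat{\mu}\circ\hat{\eta}_{\hat{S}}=1_{\hat{S}}$ and $\hat{\mu}^n\circ\hat{\eta}^n_{\hat{S}_n}=1_{\hat{S}_n}$, and concluding that both composites around the square agree on the generating data, hence are equal by the universal property. There is no genuine obstacle, since the uniqueness of lifts in the strict case means every diagram one needs to check holds automatically on choices of lifts and only the unit-level equality requires any argument.
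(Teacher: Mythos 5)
Your proposal is correct and is exactly what the paper does: its proof of this lemma is literally the instruction to repeat the argument of Theorem \ref{ind_monad_AWFS} with the $S_k$ in place of the $R_k$ and Lemma \ref{univ_prop_monad_unit_st} in place of Lemma \ref{univ_prop_monad_unit}, which is the substitution you carry out. Your added observation that uniqueness of lifts makes the verifications slightly cheaper is a fair gloss but does not change the argument.
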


\begin{proof}
Repeat argument from Theorem \ref{ind_monad_AWFS}.
\end{proof}

\begin{definition}
	We define the \emph{strict coherator}
	\[
	J^{\st}:\Theta_0^\op\to \hat{\Theta}
	\]
	to be
	\[
	J^{\st}:=\widehat{S}(\id_{\Theta_0^\op}).
	\]
\end{definition}

\begin{theorem}\label{strict_coh_is_true}
The category of strict $\infty$-groupoids is equivalent to 
\[
\Mod(\hat{\Theta}).
\]
\end{theorem}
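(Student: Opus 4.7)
The plan is to show that $J^{\st}:\Theta_0^{\op}\to \hat{\Theta}$ is the initial uniquely contractible globular theory under $\Theta_0^{\op}$, and then to invoke Ara's identification (2.9 of \cite{Dim4}) of strict $\infty$-groupoids with the models over that initial theory.

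First I would verify that $\hat{\Theta}$ is uniquely contractible. Each $S_k$ arises from a \emph{strong} factorization system via Kelly's Small Object Argument (Theorem \ref{Kelly_SOA}), so every admissible pair of dimension $k$ present at stage $\hat{S}_k(\id_{\Theta_0^{\op}})$ receives a \emph{unique} lift at the next stage. Since the units $\eta^n$ for $n>k$ are fully faithful below dimension $k{+}1$, every admissible pair of dimension $k$ in $\hat{\Theta}$ pulls back to a unique admissible pair at level $\hat{S}_k(\id_{\Theta_0^{\op}})$, and the unique lift there is carried through the colimit to a lift in $\hat{\Theta}$. A competing lift in $\hat{\Theta}$ would have to appear at some finite stage of the colimit, contradicting the unique-solvability provided by the strong factorization system at that stage.

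Next I would establish the initiality: for any uniquely contractible $K:\Theta_0^{\op}\to D$ under $\Theta_0^{\op}$ there is a unique map of globular theories $\hat{\Theta}\to D$ over $\Theta_0^{\op}$. Using Lemma \ref{univ_prop_monad_unit_st} inductively, the unique-lift property of $D$ produces a unique extension from a map $\hat{S}_n(\id_{\Theta_0^{\op}})\to D$ to one out of $\hat{S}_{n+1}(\id_{\Theta_0^{\op}})$; compatibility of these extensions follows from the distributive laws $\kappa^{i,j}$ together with the uniqueness at each level; the colimit description of $\hat{\Theta}$ via the $n$-tails $\kappa_n$ then assembles them into the unique extension out of $\hat{\Theta}$. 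Finally I would invoke the cited result of Ara, realising strict $\infty$-groupoids as the models over the initial uniquely (canonically) contractible globular theory; combined with the initiality of $\hat{\Theta}$, this gives the stated equivalence $\Mod(\hat{\Theta}) \simeq $ the category of strict $\infty$-groupoids.

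The hard part will be the inductive compatibility argument for initiality: verifying that the unique extensions produced level-by-level via Lemma \ref{univ_prop_monad_unit_st} respect the distributive laws $\kappa^{i,j}$ so that they coherently descend to a map out of the colimit $\hat{\Theta}$. A secondary subtlety is matching our notion of unique contractibility with Ara's \emph{canonically contractible}; this is flagged in the remark accompanying the relevant definition and should reduce to unpacking both definitions and checking the analogous universal properties of the two constructions.
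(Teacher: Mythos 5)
Your proposal is correct and follows essentially the same route as the paper: identify $\hat{\Theta}$ as satisfying the universal property of Ara's theory for strict $\infty$-groupoids (initial uniquely/canonically contractible globular theory) and then cite Ara's equivalence of its models with strict $\infty$-groupoids. The paper's own proof is just a two-line citation of Propositions 3.8 and 3.22 of \cite{Dim5}, so your sketch of why the universal property holds supplies detail the paper omits rather than diverging from it.
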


\begin{proof}
The strict coherator
	\[
	J^{\st}:\Theta_0^\op\to \hat{\Theta}
	\]
satisfies the universal property of Proposition 3.8 of \cite{Dim5}. By Proposition 3.22 of \cite{Dim5}, $\Mod(\hat{\Theta})$ is equivalent to the category of strict $\infty$-groupoids.
\end{proof}

\section{Appendix A: Understanding the Inductive Coherator}
We use this appendix to show why moving along the coherator corresponds to weak enrichment of the shapes for higher groupoids. A similar find can be found in \cite{Dim1} but organized in a different fashion due to the coherator being different and us being interested how this relates to weak enrichment. We choose to write this part to provide a place where the interested reader can go back and look at what certain notation means when dicussing maps in our special theory.
\begin{notation}
	Let $J^{IC}:\Theta_0^\op\to IC$ be our special theory. We use the following notation:
	\begin{itemize}
		\item $\circ$ is always used to denote composition on $SC$
		\item $s$ and $t$ refers to the application of the source and target maps once and $s^n$ and $t^n$ refers to an $n$-fold composition of source and target maps, respectively.
		\item given $\vec{p}=(p_1,\dots,p_{2k+1})$, we use 
		\[
		\epsilon_i:\vec{p}\to p_{2i+1}
		\]
		to denote the projection for $i=0,1,\dots, n$
		\item we use $[,\dots,]$ to denote that we are forming the unique map formed by the pullback. Just to make thinks clear, here is a toy example. Suppose $f:4\to 2$ and $g:4\to 1$ are maps of $IC$ such that $s^2\circ f=t\circ g$. Then there is a unique map 
		\[
		[g,f]:4\to (1,0,2),
		\]
	\end{itemize}
\end{notation}
\noindent This will help the reader keep track of some of the crucial notation. We will introduce more as needed. We officially begin our investigation here, let $X:IC\to\Set$ be an $\infty$-groupoid of shape $IC$. We now study what types of maps are added along the extension
\[
J_1:\Theta_0^\op\to IC_1.
\]

\underline{\textbf{Identity on $0$-cells}}
The map $Z:0\to 1$ which  is obtained as a lift of the admissible pair
\[
		\begin{tikzpicture}[node distance=2.5cm]
			\node (A) {$0$};
			\node (B)[right of=A]{$0$};
			\draw[transform canvas={yshift=0.3ex},->] (A) to node[above=3] {$1_0$} (B);
			\draw[transform canvas={yshift=-0.3ex},->,swap](A) to node[below=3] {$1_0$} (B);
		\end{tikzpicture}
		\]
admits a $1$-cell
\[
x\xrightarrow{1_x}x
\]
for all $0$-cells $x\in X_0$.

\underline{\textbf{Composition of $1$-cells along $0$-cells}}
The map $c:(1,0,1)\to 1$  is obtained as a lift of the admissible pair
\[
		\begin{tikzpicture}[node distance=2.5cm]
			\node (A) {$(1,0,1)$};
			\node (B)[right of=A]{$0$};
			\draw[transform canvas={yshift=0.3ex},->] (A) to node[above=3] {$s\circ \epsilon_1$} (B);
			\draw[transform canvas={yshift=-0.3ex},->,swap](A) to node[below=3] {$t\circ \epsilon_0$} (B);
		\end{tikzpicture}
		\]
and admits a $1$-cell
\[
x\xrightarrow{g\circ f}z
\]
for all $(g,f)\in X_1\times_0X_1$ where $tg=z$ and $sf=x$.

\underline{\textbf{Inverse of $1$-cells along $0$-cells}}
The map $\omega:1\to 1$ is obtained as a lift of the admissible pair
\[
		\begin{tikzpicture}[node distance=2.5cm]
			\node (A) {$1$};
			\node (B)[right of=A]{$0$};
			\draw[transform canvas={yshift=0.3ex},->] (A) to node[above=3] {$t$} (B);
			\draw[transform canvas={yshift=-0.3ex},->,swap](A) to node[below=3] {$s$} (B);
		\end{tikzpicture}
		\]
and admits a $1$-cell
\[
y\xrightarrow{f^{-1}}x
\]
for all $f\in X_1$ where $tf=y$ and $sf=x$.

\noindent Now if $X$ is $1$-truncated (see Definition 2.2 of \cite{HenLan}), then these are all the generating structure maps. Moreover, this data satisfies the axioms of a groupoid.  We now study what types of maps are added along the extension
\[
J_2:IC_1\to IC_2
\]
and say what it means for $X$ to be $2$-truncated.

\underline{Associativity of Composition of $1$-cells along $0$-cells}
The map $a:(1,0,1,0,1)\to 2$ is obtained as a lift of the admissible pair
\[
		\begin{tikzpicture}[node distance=2.5cm]
			\node (A) {$(1,0,1,0,1)$};
			\node (B)[right of=A]{$1$};
			\draw[transform canvas={yshift=0.3ex},->] (A) to node[above=3] {$c\circ(c{,}1)$} (B);
			\draw[transform canvas={yshift=-0.3ex},->,swap](A) to node[below=3] {$c\circ (1{,}c)$} (B);
		\end{tikzpicture}
		\]
and admits a $2$-cell 
\[
(h\circ g)\circ f\xrightarrow{a}h\circ (g\circ f)
\]
for all $(h,g,f)\in X_1\times_0X_1\times_0X_1$.

\underline{\textbf{Left Unitor of Composition of $1$-cells along $0$-cells}}
The map $Z_l:1\to 2$ is obtained as a lift of the admissible pair
\[
		\begin{tikzpicture}[node distance=2.5cm]
			\node (A) {$1$};
			\node (B)[right of=A]{$1$};
			\draw[transform canvas={yshift=0.3ex},->] (A) to node[above=3] {$c\circ(Z\circ t{,}1)$} (B);
			\draw[transform canvas={yshift=-0.3ex},->,swap](A) to node[below=3] {$1_1$} (B);
		\end{tikzpicture}
		\]
and admits a $2$-cell 
\[
1_y\circ f\xrightarrow{l}f
\]
for all $f\in X_1$ where $tf=y$.

\underline{\textbf{Right Unitor of Composition of $1$-cells along $0$-cells}}
The map $Z_r:1\to 2$  is obtained as a lift of the admissible pair
\[
		\begin{tikzpicture}[node distance=2.5cm]
			\node (A) {$1$};
			\node (B)[right of=A]{$1$};
			\draw[transform canvas={yshift=0.3ex},->] (A) to node[above=3] {$c\circ(1,Z\circ s)$} (B);
			\draw[transform canvas={yshift=-0.3ex},->,swap](A) to node[below=3] {$1_1$} (B);
		\end{tikzpicture}
		\]
and admits a $2$-cell 
\[
f\circ 1_x\xrightarrow{l}f
\]
for all $f\in X_1$ where $sf=x$.

We may keep going to obtains lifts of admissible pairs which admits new $1$-cells which witness an unbiased composition and $2$-cells for:
\begin{itemize}
	\item Witness of Left Invertibility of $1$-cells along $0$-cells
	\item Witness of Right Invertibility of $1$-cells along $0$-cells
	\item Composition of $2$-cells along $1$-cells
	\item Composition of $2$-cells along $0$-cells
	\item Identity of Identity on $0$-cells
	\item Identity on $1$-cells
	\item Inverse of $2$-cells along $0$-cells
	\item Inverse of $2$-cells along $1$-cells
	\item interchange of $2$-cells
	\item etc...
\end{itemize}
There are many maps obtained by adding lifts along the extension
\[
J_2:IC_1\to IC_2.
\]
Upon examination, we say that if $X$ is $2$-truncated (see Definition 2.2 of \cite{HenLan}) then $X$ has the structure of an unbiased weak $2$-groupoid. More generally, when we move one position along the coherator, we weakly enrich one level categorically and the previous level has maps which adds an unbiased structure on that level. 

\section{Appendix B: Epi-Monads}
\begin{definition}\label{epi_monads}
We say that a monad $(T,\eta,\mu)$ on a category $C$ is an \emph{epi-monad} if $\eta_x$ is an epimorphism in $C$ for all $x\in\ob(C)$.
\end{definition}

\begin{lemma}\label{epi_monad_is_epi_in _end}
If $(T,\eta,\mu)$ is an epi-monad on a category $C$, then $\eta$ is an epimorphism in the category of endofunctors on $C$.
\end{lemma}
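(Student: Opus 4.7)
The plan is to show directly that $\eta:1_C \Rightarrow T$ satisfies the epimorphism property in $\End(C)$ by invoking the componentwise structure of natural transformations. Recall that a morphism in a functor category $\End(C) = [C, C]$ is determined by its components, and two natural transformations agree if and only if they agree at every object.

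First, suppose $\phi, \psi : T \Rightarrow S$ are two natural transformations in $\End(C)$ with the property that
\[
\phi \circ \eta = \psi \circ \eta
\]
as natural transformations $1_C \Rightarrow S$. Taking components at an arbitrary object $x \in \ob(C)$, this gives the equality $\phi_x \circ \eta_x = \psi_x \circ \eta_x$ in $C$. By hypothesis $(T,\eta,\mu)$ is an epi-monad, so $\eta_x$ is an epimorphism in $C$, and hence $\phi_x = \psi_x$.

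Since this holds for every $x \in \ob(C)$ and natural transformations are determined by their components, we conclude $\phi = \psi$. Therefore $\eta$ is an epimorphism in $\End(C)$, as required. I do not anticipate any obstacle: the argument is simply the standard ``componentwise epimorphisms are epimorphisms in a functor category'' observation, specialized to the unit of an epi-monad.
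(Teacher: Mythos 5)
Your proof is correct and is essentially identical to the paper's: both take two natural transformations equalized by $\eta$, pass to components, use that each $\eta_x$ is an epimorphism in $C$, and conclude equality of the transformations componentwise. No further comment is needed.
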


\begin{proof}
Let $\alpha,\alpha':T\Rightarrow F$ be maps in $\End(C)$ such that 
\[
\alpha\circ \eta=\alpha'\circ \eta.
\]
Then
\[
\alpha_c\circ \eta_c=\alpha'_c\circ\eta_c
\]
for all objects $c$ of $C$ and therefore $\alpha_c=\alpha'_c$ for all objects $c$ of $C$. This means that $\alpha=\alpha'$ and $\eta$ is an epimorphism in $\End(C)$.
\end{proof}

\begin{definition}
Let 
\[
	T:=((T_i)_{i=0}^\infty,(\lambda_{i,j})_{i>j})
	\]
be a distributive series of monads on $C$. We call $T$ an \emph{epi-distributive series of monads} if $(T_i,\eta^i,\mu^i)$ is an epi-monad for all $i\geq 0$.
\end{definition}

We need the following lemma.

\begin{lemma}\label{comp_of_epis_is_epi}
Let $C$ be a category  and $\xi:\omega\to C$ be a diagram, where $\omega$ is the ordinal for the poset of natural numbers, whose colimit exists and 
\[
\xi(i)\to \xi(i+1)
\]
is an epimorphism for all $i\geq 0$. Then the induced map 
\[
\xi(0)\to \colim(\xi)
\]
is an epimorphism.
\end{lemma}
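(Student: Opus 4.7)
The plan is to verify the defining property of an epimorphism directly. Let $\iota_i : \xi(i) \to \colim(\xi)$ denote the structure maps of the colimiting cocone, so that the induced map in question is $\iota_0$. Suppose we are given two parallel morphisms $\alpha, \beta : \colim(\xi) \to D$ in $C$ with $\alpha \circ \iota_0 = \beta \circ \iota_0$. The task reduces to showing $\alpha = \beta$.

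First I would establish, by induction on $i$, that $\alpha \circ \iota_i = \beta \circ \iota_i$ for every $i \geq 0$. The base case is the hypothesis. For the inductive step, the cocone condition gives $\iota_{i+1} \circ \xi(i \to i+1) = \iota_i$, so the inductive hypothesis yields
\[
\alpha \circ \iota_{i+1} \circ \xi(i \to i+1) = \beta \circ \iota_{i+1} \circ \xi(i \to i+1).
\]
Since $\xi(i \to i+1)$ is an epimorphism by assumption, it is right-cancellable, and we conclude $\alpha \circ \iota_{i+1} = \beta \circ \iota_{i+1}$.

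With $\alpha \circ \iota_i = \beta \circ \iota_i$ for all $i$, the universal property of $\colim(\xi)$ forces $\alpha = \beta$. There is no genuine obstacle here: the argument is simply that epimorphisms propagate through a transfinite composition of length $\omega$ via the cocone identities, and the induction above just records this fact. The only thing to be careful about is that the hypothesis is stated for each successor transition $\xi(i) \to \xi(i+1)$, which is exactly what the inductive step consumes one layer at a time.
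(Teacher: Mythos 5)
Your proof is correct. The paper actually states this lemma without providing any proof, so there is nothing to compare against; your argument---propagating the identity $\alpha\circ\iota_i=\beta\circ\iota_i$ up the chain by cancelling each epimorphism $\xi(i)\to\xi(i+1)$ against the cocone identity, then invoking the uniqueness clause of the colimit's universal property---is exactly the standard argument the author evidently had in mind, and it is complete as written.
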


\begin{lemma}\label{ind_pointing_is_epi}
Let 
\[
	T:=((T_i)_{i=0}^\infty,(\lambda_{i,j})_{i>j})
	\]
be an epi-distributive series of monads on $C$. Then the pointing
\[
	\hat{\eta}:1_C\Rightarrow \hat{T}
	\]
for the associated pointed endofunctor of the distributive series is an epimorphism in $\End(C)$. More specifically, the pointing 
\[
	\hat{\eta}:1_C\Rightarrow \hat{T}
	\]
is an epimorphism on components.
\end{lemma}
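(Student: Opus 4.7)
The plan is to exploit that colimits in $\End(C)$ are computed pointwise, reducing the claim to showing that each component $\hat{\eta}_c : c \to \hat{T}c$ is an epimorphism in $C$, and then lift this conclusion to $\End(C)$ by the same argument used in Lemma \ref{epi_monad_is_epi_in _end}.

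First I would fix $c \in \ob(C)$ and apply the pointwise evaluation functor $\mathrm{ev}_c : \End(C) \to C$ to the defining diagram of $\hat{T}$. Since $\mathrm{ev}_c$ preserves colimits, this yields a colimit diagram
\[
c \xrightarrow{\eta^0_c} T_0 c \xrightarrow{(\eta^1_{T_0})_c} T_1 T_0 c \xrightarrow{(\eta^2_{T_1 T_0})_c} \cdots
\]
in $C$ whose colimit is $\hat{T}c$, and in which the induced map from $c$ is precisely the component $\hat{\eta}_c$. Each transition map has the form $(\eta^{i+1}_{T_i \cdots T_0})_c = \eta^{i+1}_{T_i T_{i-1} \cdots T_0 c}$, which is a component of the unit of the monad $T_{i+1}$; since $T$ is an epi-distributive series, every such component is an epimorphism in $C$. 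The initial leg $\eta^0_c$ is an epimorphism for the same reason.

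Next I would invoke Lemma \ref{comp_of_epis_is_epi}, the observation that in a sequential colimit indexed by $\omega$ whose successive maps are all epimorphisms, the canonical map from the initial object of the diagram to the colimit is itself an epimorphism. Applied here, this immediately gives that $\hat{\eta}_c : c \to \hat{T}c$ is an epimorphism in $C$. Since $c$ was arbitrary, $\hat{\eta}$ is an epimorphism on components, which is the second assertion of the lemma.

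Finally, to upgrade componentwise epi to epi in $\End(C)$, I would repeat the short argument of Lemma \ref{epi_monad_is_epi_in _end}: given $\alpha, \alpha' : \hat{T} \Rightarrow F$ with $\alpha \circ \hat{\eta} = \alpha' \circ \hat{\eta}$, evaluation at each object $c$ yields $\alpha_c \circ \hat{\eta}_c = \alpha'_c \circ \hat{\eta}_c$, and epiness of $\hat{\eta}_c$ forces $\alpha_c = \alpha'_c$, whence $\alpha = \alpha'$. The only non-routine ingredient is Lemma \ref{comp_of_epis_is_epi}, which is stated just above and handles the transfer from componentwise successive epimorphisms to an epimorphism into the colimit; everything else is formal.
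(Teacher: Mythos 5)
Your proposal is correct and is essentially the paper's proof: the paper simply states that the lemma follows by combining Lemma \ref{epi_monad_is_epi_in _end} and Lemma \ref{comp_of_epis_is_epi}, and your argument is the worked-out version of exactly that combination (pointwise evaluation of the colimit, epiness of the unit components, the sequential-colimit lemma, then the two-line upgrade to an epimorphism in $\End(C)$). The only implicit assumption you share with the paper is that the colimit defining $\hat{T}$ is computed pointwise, which holds in the cocomplete settings the paper works in.
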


\begin{proof}
This is a combination of Lemma \ref{epi_monad_is_epi_in _end} and Lemma \ref{comp_of_epis_is_epi}.
\end{proof}

\begin{theorem}
Let 
\[
	T:=((T_i)_{i=0}^\infty,(\lambda_{i,j})_{i>j})
	\]
be an epi-distributive series of monads on a category $C$, in which the induced pointed endofunctor exists. Then the following equivalent conditions hold:
\begin{itemize}
\item  the associated pointed endofunctor of the distributive series lifts to a monad on $C$
\item the associated pointed endofunctor of the distributive series lifts to a unique monad on $C$
\item the distributive series is completable.
\end{itemize}
\end{theorem}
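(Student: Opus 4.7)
The plan is to establish the cycle (2) $\Rightarrow$ (1) $\Rightarrow$ (3) $\Rightarrow$ (1) $\Rightarrow$ (2), where (2) $\Rightarrow$ (1) and (3) $\Rightarrow$ (1) are immediate from the definitions. The work lies in the two implications (1) $\Rightarrow$ (2) and (1) $\Rightarrow$ (3), and both hinge on the same cancellation principle provided by Lemma \ref{ind_pointing_is_epi}: the induced pointing $\hat{\eta}:1_C\Rightarrow \hat{T}$ is an epimorphism on components, hence so is its whiskering $\hat{\eta}_{\hat{T}}:\hat{T}\to \hat{T}^2$, and more generally each $\hat{\eta}^n_{\hat{T}_n}:\hat{T}_n\to \hat{T}_n^2$ is componentwise epi because $\hat{\eta}^n$ is the composite of the unit components $\eta^i$ of the epi-monads $T_i$ for $0\leq i\leq n$.

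For (1) $\Rightarrow$ (2), suppose $(\hat{T},\hat{\eta})$ lifts to two monads $(\hat{T},\hat{\eta},\hat{\mu})$ and $(\hat{T},\hat{\eta},\hat{\mu}')$. The unit axiom forces
\[
\hat{\mu}\circ \hat{\eta}_{\hat{T}} = 1_{\hat{T}} = \hat{\mu}'\circ \hat{\eta}_{\hat{T}},
\]
and since $\hat{\eta}_{\hat{T}}$ is componentwise epi we may cancel on the right to obtain $\hat{\mu}=\hat{\mu}'$.

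For (1) $\Rightarrow$ (3), fix $n\geq 0$ and consider the square from Definition \ref{compl_dist_series}. First observe that $\kappa_n\circ \hat{\eta}^n=\hat{\eta}$ by the very definition of the $n$-tail (see Definition \ref{n-tail}) as a colimit cocone map. I would then precompose both sides of the square with $\hat{\eta}^n_{\hat{T}_n}:\hat{T}_n\to \hat{T}_n^2$. On the left, the monad unit axiom for $(\hat{T}_n,\hat{\eta}^n,\hat{\mu}^n)$ yields
\[
\kappa_n\circ \hat{\mu}^n\circ \hat{\eta}^n_{\hat{T}_n}=\kappa_n\circ 1_{\hat{T}_n}=\kappa_n.
\]
On the right, writing the horizontal composition as $\kappa_n\kappa_n=\hat{T}(\kappa_n)\circ \kappa_n(\hat{T}_n)$, using $\kappa_n\circ \hat{\eta}^n=\hat{\eta}$ whiskered on $\hat{T}_n$ to identify $\kappa_n(\hat{T}_n)\circ \hat{\eta}^n_{\hat{T}_n}=\hat{\eta}_{\hat{T}_n}$, and then naturality of $\hat{\eta}$ applied to $\kappa_n$ to identify $\hat{T}(\kappa_n)\circ \hat{\eta}_{\hat{T}_n}=\hat{\eta}_{\hat{T}}\circ \kappa_n$, one gets
\[
\hat{\mu}\circ (\kappa_n\kappa_n)\circ \hat{\eta}^n_{\hat{T}_n}=\hat{\mu}\circ \hat{\eta}_{\hat{T}}\circ \kappa_n=\kappa_n.
\]
Both composites equal $\kappa_n$, and because $\hat{\eta}^n_{\hat{T}_n}$ is componentwise epi we may cancel to deduce $\kappa_n\circ \hat{\mu}^n=\hat{\mu}\circ \kappa_n\kappa_n$, which is exactly the completability square.

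The main obstacle I anticipate is bookkeeping rather than conceptual: keeping the horizontal composition $\kappa_n\kappa_n$ in a form in which the naturality of $\hat{\eta}$ and the colimit identity $\kappa_n\circ \hat{\eta}^n=\hat{\eta}$ can be combined cleanly, and confirming that componentwise epimorphisms in $C$ really do imply left-cancellability of the whiskered natural transformations $\hat{\eta}_{\hat{T}}$ and $\hat{\eta}^n_{\hat{T}_n}$ at the level of natural transformations out of $\hat{T}^2$ and $\hat{T}_n^2$ respectively. Both points are handled exactly as in the proof of Lemma \ref{epi_monad_is_epi_in _end}, evaluated pointwise.
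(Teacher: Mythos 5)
Your proof of the equivalence of the three conditions is correct, and your route through (1) $\Rightarrow$ (3) is genuinely different from, and arguably cleaner than, the paper's. The paper first manufactures a natural transformation $\ol{\mu}:\hat{T}^2\Rightarrow\hat{T}$ from the colimit presentation of $\hat{T}^2$ by the $\hat{T}_n^2$ along $\kappa_n\kappa_n$ and then identifies $\ol{\mu}$ with the given $\hat{\mu}$ by epi-cancellation; you instead verify the completability square directly by precomposing with $\hat{\eta}^n_{\hat{T}_n}$ and cancelling. This sidesteps the question (left implicit in the paper) of why $\hat{T}^2$ is the colimit of the $\hat{T}_n^2$. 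The ingredients you invoke are all available: $\kappa_n\circ\hat{\eta}^n=\hat{\eta}$ holds because the cocone legs commute with the diagram of Definition \ref{ind_funct_on_dist} and $\hat{\eta}^n$ is exactly the composite of the first $n+1$ maps of that diagram; $\hat{\eta}^n$ is componentwise epi as a composite of components of units of epi-monads; and cancellation of a componentwise epimorphism against natural transformations is exactly the argument of the paper's first lemma in this appendix, evaluated pointwise.

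However, the theorem claims more than the equivalence: it asserts that these equivalent conditions \emph{hold}. The paper's proof has a second half (``We now show that they hold'') in which a monad lift of $(\hat{T},\hat{\eta})$ is actually produced: one checks that $\eta^{n+1}_{\hat{T}_n}\circ\hat{\mu}^n=\hat{\mu}^{n+1}\circ(\eta^{n+1}_{\hat{T}_n}\eta^{n+1}_{\hat{T}_n})$, using the induced distributive law $\sigma:\hat{T}_nT_{n+1}\Rightarrow T_{n+1}\hat{T}_n$ from Cheng's theorem, so that the multiplications $\hat{\mu}^n$ assemble into a map of diagrams; one then passes to the colimit to obtain $\hat{\mu}:\hat{T}^2\Rightarrow\hat{T}$ and verifies the unit and associativity axioms by epi-cancellation against $\hat{\eta}$. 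Your proposal establishes only the cycle of implications and never exhibits any monad lift, so as written it proves the equivalence but not the existence claim. To complete the argument you must add this construction, or some other proof that at least one of the three conditions is satisfied.
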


\begin{proof}
Suppose $T$ is completable. Then the associated pointed endofunctor of the distributive series lifts to a monad on $C$.

Suppose  the associated pointed endofunctor of the distributive series lifts to a unique monad on $C$. Then the associated pointed endofunctor of the distributive series lifts to a monad on $C$.

Suppose the associated pointed endofunctor of the distributive series lifts to a monad on $C$. Suppose that $(\hat{T},\hat{\eta},\hat{\mu})$ and $(\hat{T},\hat{\eta},\ol{\mu})$
are two such lifts. Then notice that
\[
\hat{m}_c\circ \hat{\eta}_{\hat{T}(c)}=\ol{m}_c\circ \hat{\eta}_{\hat{T}(c)}
\]
for all objects $c\in\ob(C)$, so that $\hat{m}_c=\ol{m}_c$ for all objects $c$ of $C$ by Lemma \ref{ind_pointing_is_epi}. We conclude that $\hat{m}=\ol{m}$. This implies that  the associated pointed endofunctor of the distributive series lifts to a unique monad on $C$.

Suppose the associated pointed endofunctor of the distributive series lifts to a unique monad $(\hat{T},\hat{\eta},\hat{\mu})$ on $C$. Fix $n\geq 0$ and notice that the diagram
	\[
\begin{tikzpicture}[node distance=2.5cm,auto]
	\node (A) {$\hat{T}_n^2$};
	\node (B)[right of=A] {$\hat{T}_{n+1}^2$};
	\node(C)[below of=A] {$\hat{T}_n$};
	\node (D)[right of=C] {$\hat{T}_{n+1}$};
	\draw[->] (A) to node {$\eta^{n+1}_{\hat{T}_n}\eta^{n+1}_{\hat{T}_n}$}(B);
	\draw[->,swap] (A) to node{$\hat{\mu}^n$}(C);
	\draw[->] (B) to node{$\hat{\mu}^{n+1}$}(D);
	\draw[->,swap] (C) to node {$\eta^{n+1}_{\hat{T}_n}$}(D);
\end{tikzpicture}
\]
commutes since
\[
\eta^{n+1}_{\hat{T}_n}\circ \hat{\mu}^n=T_{n+1}\hat{\mu}^n\circ \eta^{n+1}_{\hat{T}^2_n}
\]
\[
= T_{n+1}\hat{\mu}^n\circ \mu^{n+1}_{\hat{T}^2}\circ T_{n+1}\eta^{n+1}_{\hat{T}_n^2}\circ \eta^{n+1}_{\hat{T}^2_n}
\]
\[
=T_{n+1}\hat{\mu}^n\circ \mu^{n+1}_{\hat{T}^2}\circ T_{n+1}\sigma_{\hat{T}_n}\circ T_{n+1}\hat{T}_n\eta^{n+1}\hat{T}_n
\circ \eta^{n+1}_{\hat{T}^2_n}
\]
\[
=\hat{\mu}^{n+1}\circ \hat{T}_{n+1}\eta^{n+1}\hat{T}_n\circ \eta^{n+1}_{\hat{T}^2_n}
\]
\[
=\hat{\mu}^{n+1}\circ \eta^{n+1}_{\hat{T}_n}\eta^{n+1}_{\hat{T}_n}
\]
where the first equality is naturality, the second one is by the monad axioms, the third one is by using the induced distributive law
\[
\sigma:\hat{T}_nT_{n+1}\Rightarrow T_{n+1}\hat{T}_n
\]
 from Theorem 2.1 of \cite{EC1}, the fourth one is just the definition of $\hat{\mu}^{n+1}$ and $\hat{T}_{n+1}$, and the fifth one is by the definition of $\eta^{n+1}_{\hat{T}_n}\eta^{n+1}_{\hat{T}_n}$.  By the universal property of the colimit and definition of $\kappa_n$, there is a unique natural transformation 
\[
\ol{\mu}:\hat{T}^2\Rightarrow\hat{T}
\]
such that 
\[
\begin{tikzpicture}[node distance=2cm,auto]
	\node (A) {$\hat{T}_n^2$};
	\node (B)[right of=A] {$\hat{T}^2$};
	\node(C)[below of=A] {$\hat{T}_n$};
	\node (D)[right of=C] {$\hat{T}$};
	\draw[->] (A) to node {$\kappa_n\kappa_n$}(B);
	\draw[->,swap] (A) to node{$\hat{\mu}^n$}(C);
	\draw[->] (B) to node{$\ol{\mu}$}(D);
	\draw[->,swap] (C) to node {$\kappa_n$}(D);
\end{tikzpicture}
\]
commutes for $n\geq 0$. In particular, we have that
\[
\begin{tikzpicture}[node distance=2cm,auto]
	\node (A) {$\hat{T}_0^2$};
	\node (B)[right of=A] {$\hat{T}^2$};
	\node(C)[below of=A] {$\hat{T}_0$};
	\node (D)[right of=C] {$\hat{T}$};
	\draw[->] (A) to node {$\kappa_0\kappa_0$}(B);
	\draw[->,swap] (A) to node{$\hat{\mu}^0$}(C);
	\draw[->] (B) to node{$\ol{\mu}$}(D);
	\draw[->,swap] (C) to node {$\kappa_0$}(D);
\end{tikzpicture}
\]
commutes, so that we must have that
\[
\begin{tikzpicture}[node distance=2cm,auto]
	\node (A) {$1_C$};
	\node (B)[right of=A] {$\hat{T}^2$};
	\node(C)[below of=A] {$1_C$};
	\node (D)[right of=C] {$\hat{T}$};
	\draw[->] (A) to node {$\hat{\eta}\hat{\eta}$}(B);
	\draw[->,swap] (A) to node{$1_{1_C}$}(C);
	\draw[->] (B) to node{$\ol{\mu}$}(D);
	\draw[->,swap] (C) to node {$\hat{\eta}$}(D);
\end{tikzpicture}
\]
commutes which says that
\[
\ol{\mu}\circ \hat{\eta}_{\hat{T}}\circ\hat{\eta}=\ol{\mu}\circ \hat{T}\hat{\eta}\circ\hat{\eta}
\]
\[
=\ol{\mu}\circ \hat{\eta}\hat{\eta}=\hat{\eta}.
\]
By Lemma \ref{ind_pointing_is_epi}, we are forced to have
\[
\ol{\mu}\circ \hat{\eta}_{\hat{T}}=1_{\hat{T}}=\hat{\mu}\circ \hat{\eta}_{\hat{T}}.
\]
By Lemma \ref{ind_pointing_is_epi}, we are forced to have $\ol{\mu}=\hat{\mu}$ which means that 
\[
\begin{tikzpicture}[node distance=2cm,auto]
	\node (A) {$\hat{T}_n^2$};
	\node (B)[right of=A] {$\hat{T}^2$};
	\node(C)[below of=A] {$\hat{T}_n$};
	\node (D)[right of=C] {$\hat{T}$};
	\draw[->] (A) to node {$\kappa_n\kappa_n$}(B);
	\draw[->,swap] (A) to node{$\hat{\mu}^n$}(C);
	\draw[->] (B) to node{$\hat{\mu}$}(D);
	\draw[->,swap] (C) to node {$\kappa_n$}(D);
\end{tikzpicture}
\]
commutes for $n\geq 0$, so that the distributive series is completable.

We have now shown the statements are equivalent. We now show that they hold.  Fix $n\geq 0$ and notice that the diagram
	\[
\begin{tikzpicture}[node distance=2.5cm,auto]
	\node (A) {$\hat{T}_n^2$};
	\node (B)[right of=A] {$\hat{T}_{n+1}^2$};
	\node(C)[below of=A] {$\hat{T}_n$};
	\node (D)[right of=C] {$\hat{T}_{n+1}$};
	\draw[->] (A) to node {$\eta^{n+1}_{\hat{T}_n}\eta^{n+1}_{\hat{T}_n}$}(B);
	\draw[->,swap] (A) to node{$\hat{\mu}^n$}(C);
	\draw[->] (B) to node{$\hat{\mu}^{n+1}$}(D);
	\draw[->,swap] (C) to node {$\eta^{n+1}_{\hat{T}_n}$}(D);
\end{tikzpicture}
\]
commutes since
\[
\eta^{n+1}_{\hat{T}_n}\circ \hat{\mu}^n=T_{n+1}\hat{\mu}^n\circ \eta^{n+1}_{\hat{T}^2_n}
\]
\[
= T_{n+1}\hat{\mu}^n\circ \mu^{n+1}_{\hat{T}^2}\circ T_{n+1}\eta^{n+1}_{\hat{T}_n^2}\circ \eta^{n+1}_{\hat{T}^2_n}
\]
\[
=T_{n+1}\hat{\mu}^n\circ \mu^{n+1}_{\hat{T}^2}\circ T_{n+1}\sigma_{\hat{T}_n}\circ T_{n+1}\hat{T}_n\eta^{n+1}\hat{T}_n
\circ \eta^{n+1}_{\hat{T}^2_n}
\]
\[
=\hat{\mu}^{n+1}\circ \hat{T}_{n+1}\eta^{n+1}\hat{T}_n\circ \eta^{n+1}_{\hat{T}^2_n}
\]
\[
=\hat{\mu}^{n+1}\circ \eta^{n+1}_{\hat{T}_n}\eta^{n+1}_{\hat{T}_n}
\]
where the first equality is naturality, the second one is by the monad axioms, the third one is by using the induced distributive law
\[
\sigma:\hat{T}_nT_{n+1}\Rightarrow T_{n+1}\hat{T}_n
\]
 from Theorem 2.1 of \cite{EC1}, the fourth one is just the definition of $\hat{\mu}^{n+1}$ and $\hat{T}_{n+1}$, and the fifth one is by the definition of $\eta^{n+1}_{\hat{T}_n}\eta^{n+1}_{\hat{T}_n}$.  By the universal property of the colimit and definition of $\kappa_n$, there is a unique natural transformation 
\[
\hat{\mu}:\hat{T}^2\Rightarrow\hat{T}
\]
such that 
\[
\begin{tikzpicture}[node distance=2cm,auto]
	\node (A) {$\hat{T}_n^2$};
	\node (B)[right of=A] {$\hat{T}^2$};
	\node(C)[below of=A] {$\hat{T}_n$};
	\node (D)[right of=C] {$\hat{T}$};
	\draw[->] (A) to node {$\kappa_n\kappa_n$}(B);
	\draw[->,swap] (A) to node{$\hat{\mu}^n$}(C);
	\draw[->] (B) to node{$\hat{\mu}$}(D);
	\draw[->,swap] (C) to node {$\kappa_n$}(D);
\end{tikzpicture}
\]
commutes for $n\geq 0$. In particular, we have that
\[
\begin{tikzpicture}[node distance=2cm,auto]
	\node (A) {$\hat{T}_0^2$};
	\node (B)[right of=A] {$\hat{T}^2$};
	\node(C)[below of=A] {$\hat{T}_0$};
	\node (D)[right of=C] {$\hat{T}$};
	\draw[->] (A) to node {$\kappa_0\kappa_0$}(B);
	\draw[->,swap] (A) to node{$\hat{\mu}^0$}(C);
	\draw[->] (B) to node{$\hat{\mu}$}(D);
	\draw[->,swap] (C) to node {$\kappa_0$}(D);
\end{tikzpicture}
\]
commutes, so that we must have that
\[
\begin{tikzpicture}[node distance=2cm,auto]
	\node (A) {$1_C$};
	\node (B)[right of=A] {$\hat{T}^2$};
	\node(C)[below of=A] {$1_C$};
	\node (D)[right of=C] {$\hat{T}$};
	\draw[->] (A) to node {$\hat{\eta}\hat{\eta}$}(B);
	\draw[->,swap] (A) to node{$1_{1_C}$}(C);
	\draw[->] (B) to node{$\hat{\mu}$}(D);
	\draw[->,swap] (C) to node {$\hat{\eta}$}(D);
\end{tikzpicture}
\]
commutes which says that
\[
\hat{\mu}\circ \hat{\eta}_{\hat{T}}\circ\hat{\eta}=\hat{\mu}\circ \hat{T}\hat{\eta}\circ\hat{\eta}
\]
\[
=\hat{\mu}\circ \hat{\eta}\hat{\eta}=\hat{\eta}.
\]
By Lemma \ref{ind_pointing_is_epi}, we are forced to have
\[
\hat{\mu}\circ \hat{\eta}_{\hat{T}}=1_{\hat{T}}.
\]
By naturality, we have that 
\[
\hat{\mu}\circ \hat{T}\hat{\eta}\circ\hat{\eta}=\hat{\mu}\circ \hat{\eta}_{\hat{T}}\circ\hat{\eta}.
\]
By Lemma \ref{ind_pointing_is_epi}, we are forced to have
\[
\hat{\mu}\circ \hat{T}\hat{\eta}=\hat{\mu}\circ \hat{\eta}_{\hat{T}}=1_{\hat{T}}.
\]
We now have that
\[
\hat{\mu}\circ \hat{\mu}_{\hat{T}}\circ\hat{\eta}_{\hat{T}^2}\circ\hat{\eta}_{\hat{T}}=\hat{\mu}\circ \hat{\eta}_{\hat{T}}
\]
\[
=1_{\hat{T}}=\hat{\mu}\circ \hat{\eta}_{\hat{T}}
\]
\[
=\hat{\mu}\circ \hat{T}\hat{\mu}\circ \hat{T}\hat{\eta}_{\hat{T}}\hat{\eta}_{\hat{T}}
\]
\[
=\hat{\mu}\circ \hat{T}\hat{\mu}\circ \hat{\eta}_{\hat{T}^2}\hat{\eta}_{\hat{T}}.
\]
Upon application of Lemma \ref{ind_pointing_is_epi}, we are forced to have that
\[
\hat{\mu}\circ \hat{\mu}_{\hat{T}}=\hat{\mu}\circ \hat{T}\hat{\mu},
\]
so that $T$ is a completable distributive series of monads.
\end{proof}

\noindent \small $\textbf{Acknowledgments}$ The author is deeply grateful to Simon Henry for his many illuminating comments about Grothendieck $\infty$-groupoids. Special thanks go out to Nick Gurski for recommending the use of distributive series of monads in my work, providing illuminating insights into higher dimensional structure data, and giving useful feedback to my work.
\vspace{0.3cm}

\noindent \small $\textbf{Funding}$ No funding was obtained for this research.
\vspace{0.3cm}

\noindent \large $\textbf{Declarations}$

\vspace{0.1cm}
\noindent \small $\textbf{Conflict of Interest}$ The author declares no conflict of interest.

\bibliographystyle{plain}
\bibliography{math}
\end{document}